\newtheorem{theorem}{Theorem}[section]
\newtheorem{lemma}[theorem]{Lemma}
\newtheorem{proposition}[theorem]{Proposition}
\newtheorem{corollary}[theorem]{Corollary}
\newtheorem{remark}[theorem]{Remark}
\newtheorem{note}[theorem]{Note}
\newtheorem{Formula of adjoint functors}[theorem]{Formula of adjoint functors}
\newtheorem{example}[theorem]{Example}
\newtheorem{definition}[theorem]{Definition}
\newtheorem{notation}[theorem]{Notation}
\newtheorem{Adjunction formula}[theorem]{\indent\sc Adjunction formula}
\DeclareMathOperator{\limi}{{lim}}
\newcommand{\ilim}[1]{\,\underset{#1}{\underset{\to}{\limi}}\,}
\newcommand{\plim}[1]{\,\underset{#1}{\underset{\leftarrow}{\limi}}\,}
\DeclareMathOperator{\Hom}{{Hom}}
\DeclareMathOperator{\Ext}{{Ext}}
\DeclareMathOperator{\Coker}{{Coker}}
\DeclareMathOperator{\Ker}{{Ker}}
\DeclareMathOperator{\Ima}{{Im}}
\DeclareMathOperator{\RR}{{\mathcal R}}
\DeclareMathOperator{\ZZ}{{\mathbb Z}}
\DeclareMathOperator{\QQ}{{\mathbb Q}}
\begin{document}

\title{Mittag-Leffler  functors of modules}

\author{Carlos Sancho, Fernando Sancho and Pedro Sancho}
%
%
%


\begin{abstract} Finite modules, finitely presented modules and Mittag-Leffler modules are characterized by their behaviour by tensoring with direct products
of modules. In this paper, we study and characterize the functors of modules
that preserve direct products.
\end{abstract}

\maketitle

\section{Introduction}

Let $R\,$ be an associative ring with unit, we will say that $\mathbb{M}\,$ is an $\mathcal{R}$-\textit{module} (right $\mathcal{R}$-module) if $\mathbb{M}\,$ is a covariant additive functor from the category of $R$-modules (respectively, right $R$-modules) to the category of abelian groups.



Any right $R$-module $M\,$ produces an $\mathcal{R}$-module. Namely, the \textit{quasi-coherent $\RR$-module} $\mathcal M$ associated with a right $R$-module $M$ is defined by
$$\mathcal M(S)=M\otimes_R S,$$
for any $R$-module $S$. It is significant to note that the category of right $R$-modules
is equivalent to the category of quasi-coherent $\RR$-modules. Therefore, we can study modules through their functorial incarnation.


On the other hand, given an $\mathcal R$-module $\mathbb M$,  $\mathbb M^*$ is the  right $\mathcal R$-module defined as follows:
$$\mathbb M^*(N):=\Hom_{\RR}(\mathbb M,\mathcal N),$$
for any right $R$-module $N$. 

A relevant fact is that quasi-coherent modules are reflexive, that is,  the canonical morphism of $\RR$-modules $\mathcal M\to \mathcal M^{**}$ is an isomorphism  (\cite{Pedro4}).

Quasi-coherent modules preserve direct limits, that is: 

$$\mathcal M(\ilim{i\in I} N_i )=M\otimes_R \ilim{i\in I} N_i = \ilim{i\in I} (M\otimes_R N_i)=\ilim{i\in I} \mathcal M(N_i)$$ for any direct system of $R$-modules $\{N_i\}_{i\in I}$, where $I$ is  an upward directed set. 
Watts (\cite[Th 1.]{Watts}) proved that an $\RR$-module is quasi-coherent iff it is a right exact functor and preserves direct limits. An $\mathcal R$-module $\mathbb M$ preserves direct limits  iff there exists an exact sequence of morphisms of $\RR$-modules $$\oplus_{i\in I} \mathcal P_i^*\to \oplus_{j\in J} \mathcal Q_j^*\to\mathbb M\to 0,$$ where $P_i$ and $Q_j$ are finitely presented $R$-modules, for every $i,j$. Besides, this exact sequence is a projective presentation of $\mathbb M$ (Thm \ref{4.85}). Let $\langle\text{\sl Qs-ch}\rangle$ be the category of $\mathcal R$-modules that preserve direct limits. $\langle\text{\sl Qs-ch}\rangle$ is the smallest full subcategory of the category of $\RR$-modules  containing quassi-coherent modules stable by kernels, cokernels and direct limits. It can be proved that the category $\langle\text{\sl Qs-ch}\rangle$  is  equivalent to the category of functors from the category of finitely presented $R$-modules to the category of abelian groups.
In case  that $R$ is a field, an $\mathcal R$-module  preserves direct limits  iff  it is quasi-coherent.

The aim of this paper is to extend the notions of finite, finitely presented and Mittag-Leffler modules to $\langle\text{\sl Qs-ch}\rangle$ and give different characterizations of these functors.

Any $R$-module is a direct limit of finitely presented $R$-modules and 
it is well known that an $R$-module $M$  is  finitely presented iff $\mathcal M$ preserves direct products.

\begin{definition} We will say that $\mathbb M\in \langle\text{\sl Qs-ch}\rangle$ is an FP module if it preserves   direct products.
\end{definition}

Every  $\RR$-module $\mathbb M\in \langle\text{\sl Qs-ch}\rangle$ is a direct limit of FP modules (\ref{T5.17}). FP modules are characterized as follows.

\begin{theorem}  $\mathbb F\in \langle\text{\sl Qs-ch}\rangle$  is an FP module iff any of the following statements holds

 \begin{enumerate}

\item $\Hom_{\mathcal R}(\mathbb F,\ilim{i\in I} \mathbb M_i)=\ilim{i\in I}
\Hom_{\mathcal R}(\mathbb F,\mathbb M_i)$, for any direct system of $\mathcal R$-modules $\{\mathbb M_i\}_{i\in I}$.

\item $\mathbb F$ is reflexive and $\mathbb F^*\in \langle\text{\sl Qs-ch}\rangle$.

\item  There exists an exact sequence of $\RR$-modules $$\mathcal P_1^*\to \mathcal P_2^*\to \mathbb F\to 0,$$ where $P_1$ and $P_2$ are finitely presented $R$-modules.

\item There exists an exact sequence of $\RR$-modules
$$0\to \mathbb F \to \mathcal Q_1\to \mathcal Q_2,$$ where  $Q_1$ and $Q_2$ are finitely presented right $R$-modules.

\end{enumerate}
\end{theorem}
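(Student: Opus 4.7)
My plan is to establish the cycle $(3)\Rightarrow(1)\Rightarrow\text{FP}\Rightarrow(3)$, together with $(3)\Leftrightarrow(2)$ and $(3)\Leftrightarrow(4)$ via the duality $\mathbb F\leftrightarrow\mathbb F^{*}$ and the reflexivity $\mathcal P^{**}=\mathcal P$ of quasi-coherent modules. The toolkit consists of the Yoneda-type identity $\Hom_{\mathcal R}(\mathcal P^{*},\mathbb M)=\mathbb M(P)$ for $P$ finitely presented and $\mathbb M\in\langle\text{\sl Qs-ch}\rangle$ (which follows from the equivalence $\langle\text{\sl Qs-ch}\rangle\simeq\text{Funct}(R\text{-mod}^{fp},\text{Ab})$ and the fact that $\mathcal P^{*}(N)=\Hom_R(P,N)$ for $P$ finitely presented), the projective presentation of Theorem~\ref{4.85}, the density statement Theorem~\ref{T5.17}, and Lazard's characterization that $\mathcal Q$ preserves direct products iff $Q$ is finitely presented.

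First I would dispatch the easy implications. For $(3)\Rightarrow(1)$, apply $\Hom_{\mathcal R}(-,\mathbb M)$ to the presentation of (3) to obtain the left exact sequence
\[
0\to\Hom_{\mathcal R}(\mathbb F,\mathbb M)\to\mathbb M(P_2)\to\mathbb M(P_1);
\]
since direct limits in the functor category $\mathcal R$-mod are computed pointwise, evaluation $\mathbb M\mapsto\mathbb M(P)$ commutes with direct limits, and using exactness of direct limits of abelian groups yields (1). For $(1)\Rightarrow\text{FP}$, use Theorem~\ref{T5.17} to write $\mathbb F=\ilim{\alpha}\mathbb F_\alpha$ with $\mathbb F_\alpha$ FP; by (1) the identity on $\mathbb F$ factors through some $\mathbb F_\alpha$, exhibiting $\mathbb F$ as a direct summand of an FP module, and hence FP (products commute with summand projections). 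For $(4)\Rightarrow\text{FP}$, kernels commute with products and each $\mathcal Q_i$ preserves products by Lazard.

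For the duality bridges, $(3)\Rightarrow(2)$: apply $(-)^{*}$ to the presentation of (3) and use $\mathcal P_i^{**}=\mathcal P_i$ to obtain $0\to\mathbb F^{*}\to\mathcal P_2\to\mathcal P_1$, showing that $\mathbb F^{*}$ is a kernel of a morphism of quasi-coherent modules and thus lies in $\langle\text{\sl Qs-ch}\rangle$. Reflexivity $\mathbb F\cong\mathbb F^{**}$ follows by dualizing once more and comparing with the original presentation through the commutative ladder whose outer columns are identities on $\mathcal P_i^{*}$ (since $\mathcal P_i^{*}$ is reflexive); a diagram chase forces the middle column $\mathbb F\to\mathbb F^{**}$ to be an isomorphism. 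The implication $(3)\Rightarrow(4)$ is then obtained in the same spirit: the dualized sequence is exactly (4) for $\mathbb F^{*}$, so $(4)\Rightarrow\text{FP}$ gives $\mathbb F^{*}$ FP; assuming the main obstacle $\text{FP}\Rightarrow(3)$ has been proved and applying it to $\mathbb F^{*}$, we obtain a presentation of $\mathbb F^{*}$ which, dualized back via the reflexivity of $\mathbb F$, yields (4) for $\mathbb F$. The converse implications $(2)\Rightarrow(3)$ and $(4)\Rightarrow(3)$ fit into the same cycle by symmetric use of this duality.

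The main obstacle is therefore $\text{FP}\Rightarrow(3)$, which closes the loop. Here I would invoke the equivalence $\langle\text{\sl Qs-ch}\rangle\simeq\text{Funct}(R\text{-mod}^{fp},\text{Ab})$ and the generic projective presentation $\oplus_{i\in I}\mathcal P_i^{*}\to\oplus_{j\in J}\mathcal Q_j^{*}\to\mathbb F\to 0$ from Theorem~\ref{4.85}. The delicate step is to translate the hypothesis that $\mathbb F$ preserves arbitrary direct products into a finiteness statement on the index sets $I$ and $J$: roughly, the surjection $\oplus_{j}\mathcal Q_j^{*}\to\mathbb F$ is classified by an element of $\prod_{j}\Hom_{\mathcal R}(\mathcal Q_j^{*},\mathbb F)=\prod_{j}\mathbb F(Q_j)$, and the plan is to use the FP identity $\mathbb F(\prod_{j}Q_j)=\prod_{j}\mathbb F(Q_j)$ together with the Yoneda formula to show that this classifying element already comes from a single finitely presented $Q$ and a map $\mathcal Q^{*}\to\mathbb F$ whose cokernel vanishes; iterating the same analysis on the kernel of this map yields the desired finite presentation.
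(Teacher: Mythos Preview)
Your overall plan matches the paper's: the cycle through (3), the duality bridges, and your sketch of $\text{FP}\Rightarrow(3)$ (combine the $\mathcal Q_j$'s into $W=\prod_j Q_j$, use $\mathbb F(\prod_jQ_j)=\prod_j\mathbb F(Q_j)$ to get a single epimorphism $\mathcal W^*\to\mathbb F$, then use that $\mathbb F$ preserves direct limits to descend to some finitely presented $P$) is exactly the argument of Propositions~\ref{P4.2} and~\ref{4.11}. The implications $(3)\Rightarrow(1)$, $(1)\Rightarrow\text{FP}$, $(4)\Rightarrow\text{FP}$ are all handled correctly.

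There is, however, a genuine gap in your reflexivity step inside $(3)\Rightarrow(2)$. Starting from $\mathcal P_1^{*}\to\mathcal P_2^{*}\to\mathbb F\to 0$, one dualization gives the left-exact sequence $0\to\mathbb F^{*}\to\mathcal P_2\to\mathcal P_1$; but dualizing once more yields only a complex $\mathcal P_1^{*}\to\mathcal P_2^{*}\to\mathbb F^{**}$, not an exact sequence. The functor $(-)^{*}=\Hom_{\RR}(-,\mathcal N)$ is only left exact, so the surjectivity of $\mathcal P_2^{*}\to\mathbb F^{**}$ and the exactness at $\mathcal P_2^{*}$ are governed by $\Ext^1_{\RR}$-terms that are \emph{not} automatically zero. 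Your proposed ``diagram chase'' with identity outer columns therefore cannot force $\mathbb F\to\mathbb F^{**}$ to be an isomorphism: the bottom row of the ladder is not known to be exact. This is not a cosmetic issue, since your proofs of $(3)\Rightarrow(4)$ and $(2)\Rightarrow(3)$ also invoke reflexivity of $\mathbb F$.

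The paper fills this gap with real content (Theorem~\ref{T5.13}): it first extends the dual sequence to a four-term exact sequence $0\to\mathbb F^{*}\to\mathcal P_2\to\mathcal P_1\to\mathcal P_3\to 0$ with $P_3$ finitely presented, and then uses the vanishing $\Ext^i_{\RR}(\mathcal P,\mathcal M)=0$ for all $i>0$ and all finitely presented $P$ (Corollary~\ref{1.4c}, which in turn rests on the splitting result Proposition~\ref{1.4}) to conclude that the dual of this four-term sequence stays exact, whence $\mathbb F=\mathbb F^{**}$. You need to supply this Ext-vanishing (or an equivalent device) before the duality bridges go through.
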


The category of FP modules is an abelian category. However,  in general, the category of finitely presented $R$-modules is not abelian. If $\mathbb F$ is an FP module, then
$\mathbb F^*$ is an FP module. However,  in general, if $P$ is a finitely presented $R$-module, then $P^*$ is not a finitely presented $R$-module.

\begin{definition} We will say that an $\RR$-module $\mathbb M\in \langle\text{\sl Qs-ch}\rangle$ is an ML module if the natural morphism $\mathbb M(\prod_i S_i)\to \prod_i \mathbb M(S_i)$ is injective for any set $\{S_i\}$ of  $R$-modules.
\end{definition}

A right $R$-module $M$ is a Mittag-Leffler module iff $\mathcal M$ is an ML
module. ML modules are chatacterized as follows.

\begin{theorem} Let $\mathbb M\in \langle\text{\sl Qs-ch}\rangle$. The following statements are equivalent:\begin{enumerate}

\item $\mathbb M$ is an ML module.

\item  $\mathbb M$ is a direct limit of FP  submodules.

\item  The kernel of every morphism
$\mathbb F\to\mathbb M$  is an FP module, for any FP module $\mathbb F$.

\end{enumerate}

\end{theorem}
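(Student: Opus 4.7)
The plan is to prove the cyclic chain of implications $(2)\Rightarrow(1)\Rightarrow(3)\Rightarrow(2)$.

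For $(2)\Rightarrow(1)$, I would suppose $\mathbb M=\ilim{\alpha}\mathbb F_\alpha$ is a direct limit of FP submodules. Because each $\mathbb F_\alpha\hookrightarrow\mathbb M$ is a subfunctor, every transition morphism $\mathbb F_\alpha\to\mathbb F_\beta$ is automatically injective. Since each $\mathbb F_\alpha$ preserves direct products and $\mathbb M$ preserves direct limits,
$$\mathbb M\bigl(\textstyle\prod_i S_i\bigr)=\ilim{\alpha}\mathbb F_\alpha\bigl(\prod_i S_i\bigr)=\ilim{\alpha}\prod_i\mathbb F_\alpha(S_i).$$
An element on the right is represented by $(x_i)_i\in\prod_i\mathbb F_\alpha(S_i)$ for some $\alpha$; if its image in $\prod_i\mathbb M(S_i)$ vanishes, each $x_i$ is zero in $\mathbb M(S_i)$, hence already zero in $\mathbb F_\alpha(S_i)$ by injectivity, so the natural arrow $\mathbb M(\prod_i S_i)\to\prod_i\mathbb M(S_i)$ is injective.

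For $(1)\Rightarrow(3)$, let $\mathbb F\to\mathbb M$ be a morphism from an FP module and let $\mathbb K\in\langle\text{\sl Qs-ch}\rangle$ be its kernel. For any family $\{S_i\}$ there is a commutative diagram with left exact rows
$$
\begin{array}{ccccc}
0\to\mathbb K(\prod_i S_i) & \to & \mathbb F(\prod_i S_i) & \to & \mathbb M(\prod_i S_i)\\
\downarrow & & \downarrow & & \downarrow\\
0\to\prod_i\mathbb K(S_i) & \to & \prod_i\mathbb F(S_i) & \to & \prod_i\mathbb M(S_i).
\end{array}
$$
The middle vertical is an isomorphism (since $\mathbb F$ is FP) and the right vertical is injective (since $\mathbb M$ is ML); a diagram chase forces the left vertical to be an isomorphism, so $\mathbb K$ is FP. For $(3)\Rightarrow(2)$, I invoke Theorem \ref{T5.17} to write $\mathbb M=\ilim{\alpha}\mathbb F_\alpha$ with each $\mathbb F_\alpha$ FP. Set $\mathbb K_\alpha=\Ker(\mathbb F_\alpha\to\mathbb M)$ and $\mathbb I_\alpha=\Ima(\mathbb F_\alpha\to\mathbb M)\subseteq\mathbb M$. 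By hypothesis $\mathbb K_\alpha$ is FP, and since the category of FP modules is abelian, $\mathbb I_\alpha\cong\mathbb F_\alpha/\mathbb K_\alpha$ is FP. Exactness of directed direct limits then gives $\mathbb M=\ilim{\alpha}\mathbb I_\alpha$, exhibiting $\mathbb M$ as a direct limit of FP submodules.

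The delicate step is the product argument in $(2)\Rightarrow(1)$: it hinges on the automatic injectivity of transition maps in a directed system of subfunctors, which is what lets the infinite product $\prod_i\mathbb F_\alpha(S_i)$ survive passage to the direct limit and inject into $\prod_i\mathbb M(S_i)$. The other two implications are essentially formal once one has at hand Theorem \ref{T5.17}, the abelianness of the category of FP modules, and the definition of FP as preservation of direct products.
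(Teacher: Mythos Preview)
Your proof is correct and follows essentially the same approach as the paper: your $(2)\Rightarrow(1)$ is Lemma~\ref{L6.3}, your $(1)\Rightarrow(3)$ is the diagram chase of Lemma~\ref{L6.4}, and your $(3)\Rightarrow(2)$ is essentially Proposition~\ref{6.8}. The only cosmetic difference is that in the last step you invoke Theorem~\ref{T5.17} to present $\mathbb M$ as a direct limit of FP modules and then pass to images, whereas the paper works directly from the projective presentation of Theorem~\ref{4.6N}; the content is the same.
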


If $M$ is a Mittag-Leffler module it is not true, as  a general rule, that $M$ is a direct limit of finitely presented submodules, nor is it true that the image of a morphism of $R$-modules $P\to M$ is a finitely presented module (where $P$ is a finitely presented $R$-module).

\begin{definition} Let $\mathbb M$ be an ML $\RR$-module.  $\mathbb M$  is said to be an SML module if for any FP submodule $\mathbb F\subseteq \mathbb M$ the dual morphism $\mathbb M^*\to \mathbb F$ is an epimorphism.\end{definition}

$M$ is a right strict Mittag-Leffler $R$-module  iff $\mathcal M$ is an SML module (\ref{7.8}). SML modules are chatacterized as follows.

\begin{theorem}  \label{smlFP} Let $\mathbb M\in \langle\text{\sl Qs-ch}\rangle$. The following statements are equivalent:\begin{enumerate}

\item $\mathbb M$ is an SML module.

\item $\mathbb M$ is a direct limit of  FP submodules $\mathbb F_i$, and the
morphism $\mathbb M^*\to  \mathbb F_i^*$ is an epimorphism, for any $i$.

\item There exists a monomorphism $\mathbb M\hookrightarrow \prod_{i\in I} \mathcal P_i$, where $P_i$ is a finitely presented (right) module, for each $i\in I$.   

\end{enumerate}

\end{theorem}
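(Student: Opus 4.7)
The plan is to prove the equivalence cyclically, via $(1) \Rightarrow (2) \Rightarrow (3) \Rightarrow (1)$.

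\emph{$(1) \Rightarrow (2)$.} Apply the preceding ML theorem to write $\mathbb M = \ilim{i} \mathbb F_i$ as a direct limit of FP submodules; the SML hypothesis applied to each $\mathbb F_i$ directly yields the required epimorphism $\mathbb M^* \twoheadrightarrow \mathbb F_i^*$.

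\emph{$(2) \Rightarrow (3)$.} Use characterization $(4)$ of the FP theorem: each $\mathbb F_i$ embeds in some $\mathcal Q_i$ with $Q_i$ finitely presented. Viewing this embedding as an element of $\mathbb F_i^*(Q_i) = \Hom_{\RR}(\mathbb F_i,\mathcal Q_i)$ and using that epimorphisms in the category of right $\RR$-modules are pointwise surjective, lift it to a morphism $\varphi_i\colon \mathbb M \to \mathcal Q_i$ restricting to the inclusion on $\mathbb F_i$. The joint morphism $\mathbb M \to \prod_i \mathcal Q_i$ is a monomorphism because pointwise $\mathbb M(S) = \bigcup_i \mathbb F_i(S)$ (direct limits and monomorphisms are computed pointwise), and each coordinate $\varphi_i$ is injective on $\mathbb F_i(S)$.

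\emph{$(3) \Rightarrow (1)$.} For the ML property, each $\mathcal P_i$ preserves products (as $P_i$ is finitely presented), and since products commute with products, $\prod_i \mathcal P_i$ is itself product-preserving; since $\mathbb M$ is pointwise a submodule of this functor, a short diagram chase yields injectivity of $\mathbb M(\prod_k S_k) \to \prod_k \mathbb M(S_k)$. For the strict condition, given an FP submodule $\mathbb F \subseteq \mathbb M$, invoke the ML theorem to write $\mathbb M = \ilim{j} \mathbb F_j$ of FP submodules; characterization $(1)$ of the FP theorem yields $\Hom_{\RR}(\mathbb F, \ilim{j} \mathbb F_j) = \ilim{j} \Hom_{\RR}(\mathbb F, \mathbb F_j)$, so the inclusion $\mathbb F \hookrightarrow \mathbb M$ factors as a monomorphism $\mathbb F \hookrightarrow \mathbb F_j$ for some $j$. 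It then suffices to establish (a) $\mathbb F_j^* \twoheadrightarrow \mathbb F^*$, which follows because duality is a contravariant exact equivalence on the abelian category of FP modules (a consequence of reflexivity combined with the characterization of FP via finite projective presentations), and (b) $\mathbb M^* \twoheadrightarrow \mathbb F_j^*$, obtained by combining the embedding $\mathbb M \hookrightarrow \prod_i \mathcal P_i$ with the Adjunction formula $\Hom_{\RR}(\mathcal P^*, \mathcal N) \cong \mathcal N(P)$ and the projective presentation $\mathcal A^* \to \mathcal B^* \to \mathbb F_j \to 0$ of $\mathbb F_j$.

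The main obstacle I anticipate is assertion (b) in the last step: producing extensions of morphisms $\mathbb F_j \to \mathcal N$ to all of $\mathbb M$ from the embedding alone. The Adjunction formula converts such a morphism into tensor data in $\mathcal N(B) = N\otimes_R B$ subject to a compatibility in $\mathcal N(A)$, and the coordinates of the embedding $\mathbb M \to \mathcal P_i$ are precisely the structure needed to propagate this data from $\mathbb F_j$ to $\mathbb M$; carefully checking that these coordinates yield a well-defined lift (and using the ML property of $\mathbb M$ to accommodate arbitrary $N$) is where the detailed bookkeeping lies.
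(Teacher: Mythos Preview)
Your implications $(1)\Rightarrow(2)\Rightarrow(3)$ and the ML half of $(3)\Rightarrow(1)$ match the paper's arguments (Theorem~\ref{T7.2} and the first paragraph of the $\Leftarrow$ direction of Theorem~\ref{hard}). The genuine gap is exactly where you flag it: step~(b) in $(3)\Rightarrow(1)$, showing that $\mathbb M^*\to\mathbb F^*$ is an epimorphism for an FP submodule $\mathbb F\subseteq\mathbb M$. Your sketch (``tensor data \dots\ embedding coordinates \dots\ bookkeeping'') is not a proof, and it is not clear it can be completed along those lines. Concretely: after the harmless reduction to $\mathbb M=\prod_i\mathcal P_i$, one has $(\prod_i\mathcal P_i)^*=\oplus_i\mathcal P_i^*$ (Corollary~\ref{reFPro}), so an extension of a given $f\colon\mathbb F\to\mathcal N$ to $\prod_i\mathcal P_i$ must factor through \emph{finitely many} coordinates $\mathcal P_i$. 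Nothing in your outline explains why such a finite-coordinate factorization exists for arbitrary $N$; the projective presentation $\mathcal A^*\to\mathcal B^*\to\mathbb F\to 0$ and Yoneda translate $f$ into an element of $\Ker[N\otimes_R B\to N\otimes_R A]$, but the coordinate maps $\mathbb M\to\mathcal P_i$ give elements of $\Hom_R(P_i,N)$ only after a choice, and there is no evident mechanism forcing a finite subfamily to suffice. (Your detour through an intermediate $\mathbb F_j$ does not help: $\mathbb F_j$ is again an arbitrary FP submodule, so you face the same problem.)

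The paper closes this gap by an entirely different device, the functor $\mathbb D=\Hom_{\ZZ}(-,\QQ/\ZZ)$ developed in the Appendix. One first reduces to $\mathbb M=\prod_j\mathcal P_j$ (if $(\prod_j\mathcal P_j)^*\to\mathbb F^*$ is epi then so is $\mathbb M^*\to\mathbb F^*$, since the former factors through the latter). Proposition~\ref{T-lemma} gives natural isomorphisms $\mathbb F\circ\mathbb D\cong\mathbb D\circ\mathbb F^*$ and $\mathbb M\circ\mathbb D\cong\mathbb D\circ\mathbb M^*$: the hypothesis there is that the dual preserve direct limits, which holds because $\mathbb F^*$ is FP and $\mathbb M^*=\oplus_j\mathcal P_j^*$ with each $P_j$ finitely presented. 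Precomposing the monomorphism $\mathbb F\hookrightarrow\mathbb M$ with $\mathbb D$ (which is exact) yields a monomorphism $\mathbb D\circ\mathbb F^*\hookrightarrow\mathbb D\circ\mathbb M^*$; since $\mathbb D$ is faithful and exact, $\mathbb M^*\to\mathbb F^*$ is an epimorphism. This is the missing idea.
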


In particular, if $M$ is a strict Mittag-Leffler $R$-module, then it is a pure submodule of a direct product of finitely presented $R$-modules (this result can be found in  \cite{Guil}).

Finally we prove the following theorem.

\begin{theorem} Let $M$ be an $R$-module. Then,

\begin{enumerate}

\item $M$ is a Mittag-Leffler module iff the kernel of any morphism $\prod_{\mathbb N} \mathcal R\to \mathcal M$  preserves direct products.

\item $M$ is a strict Mittag-Leffler module iff the cokernel of any morphism $\mathcal M^*\to\oplus_{\mathbb N} \mathcal R$  is isomorphic  to an $\mathcal R$-submodule of a quasi-coherent module.
\end{enumerate}

\end{theorem}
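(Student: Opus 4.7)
The plan is to reduce both parts to the earlier characterizations of ML and SML $\mathcal R$-modules together with the reflexivity of quasi-coherent modules.

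For part~(1), I use the stated equivalence ``$M$ is Mittag-Leffler iff $\mathcal M$ is an ML module'', so it suffices to show $\mathcal M$ is ML iff every $\phi\colon\prod_{\mathbb N}\mathcal R\to\mathcal M$ has $\mathbb K:=\Ker(\phi)$ preserving direct products. In one direction, given $\{S_j\}_{j\in J}$, the identification $\prod_{\mathbb N}(\prod_j S_j)=\prod_j(\prod_{\mathbb N} S_j)$ and naturality of $\phi$ fit into a commutative square whose right vertical is the canonical $\mathcal M(\prod_j S_j)\to\prod_j\mathcal M(S_j)$; this is injective because $\mathcal M$ is ML, forcing $\mathbb K(\prod_j S_j)=\prod_j\mathbb K(S_j)$. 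For the converse, take $\xi\in M\otimes\prod_j T_j$ with $\xi\mapsto 0$ in $\prod_j(M\otimes T_j)$; write $\xi=\sum_{k=1}^n m_k\otimes t_k$ with $t_k=(t_{k,j})_j$, define $\phi\colon\prod_{\mathbb N}\mathcal R\to\mathcal M$ by $\phi_S((s_l)_l):=\sum_{k=1}^n m_k\otimes s_k$, and package the $t_k$'s into $\tau\in\prod_{\mathbb N}(\prod_j T_j)$ (with $\tau_k=t_k$ for $k\le n$ and $\tau_k=0$ otherwise). The vanishing of $\xi$ in each $M\otimes T_j$ says exactly $\tau\in\prod_j\mathbb K(T_j)=\mathbb K(\prod_j T_j)$ by hypothesis, so $\xi=\phi_{\prod_j T_j}(\tau)=0$ and $M$ is ML.

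For part~(2), the structural tool is the reflexivity-induced bijection
$$\Hom_{\mathcal R}(\mathcal M^*,\oplus_{\mathbb N}\mathcal R)\;\cong\;\Hom_{\mathcal R}(\prod_{\mathbb N}\mathcal R,\mathcal M),\qquad \alpha\leftrightarrow\alpha^*,$$
coming from $(\oplus_{\mathbb N}\mathcal R)^*=\prod_{\mathbb N}\mathcal R$ and $\mathcal M^{**}=\mathcal M$; left exactness of $(-)^*$ then gives $C^*=\Ker(\alpha^*)$ with $C=\Coker(\alpha)$. For the direct implication, SML provides via Theorem~\ref{smlFP}(3) an embedding $j\colon\mathcal M\hookrightarrow\prod_i\mathcal P_i$ with $P_i$ finitely presented. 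Composing $\alpha^*$ with $j$ and dualizing componentwise, using reflexivity of each $\mathcal P_i$, yields morphisms $\mathcal P_i^*\to\oplus_{\mathbb N}\mathcal R$, and the dual-epimorphism property of SML in Theorem~\ref{smlFP}(2) lets one identify $\Ima(\alpha)$ with the image of an assembled map $\bigoplus_i\mathcal P_i^*\to\oplus_{\mathbb N}\mathcal R$. One then exhibits $C$ as an $\mathcal R$-submodule of a quasi-coherent module built from the $\mathcal P_i^*$'s. For the converse, apply the hypothesis to $\alpha$'s built from finite tuples of elements of $M$ (exactly as in part~(1)) to extract the embedding $\mathcal M\hookrightarrow\prod_i\mathcal P_i$ required by Theorem~\ref{smlFP}(3).

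The main obstacle is the direct direction of part~(2): concretely producing the quasi-coherent overmodule of $C$. The tension is that $\prod_i\mathcal P_i$ itself is not quasi-coherent, so one cannot dualize the SML embedding tautologically; one must combine the embedding with the dual-epimorphism property of SML in order to pin down the relevant dualized image inside a genuine quasi-coherent module, rather than inside the intractable functor $(\prod_i\mathcal P_i)^*$.
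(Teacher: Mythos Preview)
Your argument for part~(1) is correct. It differs from the paper's route, which is shorter but relies on more machinery: the paper observes (via $\Hom_{\mathcal R}(\prod_{\mathbb N}\mathcal R,\mathcal M)=M\otimes_R(\oplus_{\mathbb N}R)=\oplus_{\mathbb N}M$) that every $\phi\colon\prod_{\mathbb N}\mathcal R\to\mathcal M$ factors as $\prod_{\mathbb N}\mathcal R\twoheadrightarrow\mathcal R^n\overset g\to\mathcal M$, so $\Ima\phi=\Ima g$ and $\Ker\phi$ preserves direct products iff $\Ima g$ does; one then quotes the characterization ``$M$ is Mittag-Leffler iff $\Ima g$ is FP for every such $g$''. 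Your direct element-chase avoids this reduction and is self-contained; the paper's reduction is what makes part~(2) tractable.

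Part~(2), however, is not a proof in either direction. For the forward implication you try to exploit the embedding $\mathcal M\hookrightarrow\prod_i\mathcal P_i$, but you never explain how ``dualizing componentwise'' and the dual-epimorphism property combine to identify $\Ima\alpha$, nor how this produces a \emph{quasi-coherent} module containing $C$; you yourself flag this as the obstacle, and nothing in the sketch resolves it. The paper bypasses the difficulty entirely: since $\Hom_{\mathcal R}(\mathcal M^*,\oplus_{\mathbb N}\mathcal R)=\oplus_{\mathbb N}M$, the map $\alpha$ factors through a direct summand $\mathcal R^n\subset\oplus_{\mathbb N}\mathcal R$, so $\Coker\alpha\cong\Coker[\mathcal M^*\to\mathcal R^n]\oplus(\oplus_{\mathbb N\setminus n}\mathcal R)$, and the general result for targets $\mathcal N$ (Proposition~\ref{7.6}) already supplies an embedding of $\Coker[\mathcal M^*\to\mathcal R^n]$ into an explicit pushout $\mathcal N\oplus_{\mathcal P}\mathcal Q$ of quasi-coherent modules.

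Your converse for part~(2) is equally incomplete: ``apply the hypothesis to $\alpha$'s built from finite tuples \dots\ to extract the embedding $\mathcal M\hookrightarrow\prod_i\mathcal P_i$'' is not an argument---nothing in the hypothesis hands you finitely presented modules $P_i$ or maps $\mathcal M\to\mathcal P_i$. The paper's mechanism is different and uses the hypothesis structurally: given $f\colon\mathcal R^n\to\mathcal M$, the cokernel $C$ of $f^*\colon\mathcal M^*\to\mathcal R^n$ preserves direct products (as a cokernel of maps that do), and the assumed monomorphism $C\hookrightarrow\mathcal N$ exhibits $C$ as $\Ima[\mathcal R^n\to\mathcal N]$, hence $C$ also preserves direct limits. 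Thus $C$ is FP, so $\Ima f^*=\Ker[\mathcal R^n\to C]$ is FP, and dualizing gives $\Ima f=(\Ima f^*)^*$ FP together with the required epimorphism $\mathcal M^*\to(\Ima f)^*$; Corollary~\ref{5.6} then yields SML. The step you are missing in both directions is the reduction to $\mathcal R^n$ via $\Hom_{\mathcal R}(\mathcal M^*,\oplus_{\mathbb N}\mathcal R)=\oplus_{\mathbb N}M$.
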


This paper is self contained. Functorial characterizations of flat Mittag-Leffler modules and flat strict Mittag-Leffler modules are given in \cite{Pedro4}.

\section{Preliminaries}

\begin{remark} For the rest of the paper, every definition or statement is given with one module structure (left or right) on each of the modules appearing in that definition or statement;  we leave to the reader to do the respective definition or statement by interchanging  the left and right structures.
\end{remark}

\begin{notation} \label{nota2.1} Let $\mathbb M$ be a functor of $\RR$-modules.
For simplicity,
we will  sometimes use $m\in \mathbb M$  to denote $m\in \mathbb M(S)$. Given $m \in \mathbb M(S)$ and a morphism of  $R$-modules  $S \to S'$, we will  often denote by $m$ its image by the morphism $\mathbb M(S) \to \mathbb M(S')$.\end{notation}

\begin{remark} Direct limits, inverse limits of $\mathcal R$-modules and  kernels, cokernels, images, etc.,  of morphisms of $\mathcal R$-modules are regarded in the category of $\mathcal R$-modules. Besides,

$$\aligned   &  (\ilim{i\in I} \mathbb M_i)({S})=\ilim{i\in I} (\mathbb M_i({S})),\,
(\plim{j\in J} \mathbb M_j)({S})=\plim{j\in J} (\mathbb M_j({S})),\\&
(\Ker f)({S})=\Ker f_{S},\, (\Coker f)({S})=\Coker f_{S},\, (\Ima f)({S})=\Ima f_{S},
\endaligned $$
(where $I$ is an upward directed set and $J$ a downward directed set).
\end{remark}

 We will  denote by $\Hom_{\mathcal R}(\mathbb M,\mathbb M')$ the  family of all  morphisms of $\mathcal R$-modules from $\mathbb M$ to $\mathbb M'$.

\begin{proposition} \cite[2.11]{Pedro4} \label{trivial} Let $\mathbb M$ be a (left) $\RR$-module and let $\mathbb N$ be a (right) $\mathcal R$-module. Then,
$$\Hom_{\mathcal R}(\mathbb M,\mathbb N^*)=\Hom_{\mathcal R}(\mathbb N,\mathbb M^*),\, f\mapsto \tilde f,$$
where $\tilde f$ is defined as follows: $\tilde f(n)(m):=f(m)(n)$, for any $m\in\mathbb M$ and $n\in\mathbb N$.

\end{proposition}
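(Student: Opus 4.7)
The plan is to verify directly that $f \mapsto \tilde f$ is a well-defined additive bijection, and to observe that the construction is symmetric in $(\mathbb M, \mathbb N)$, so it serves as its own inverse.

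First, I would unpack the data. For $m \in \mathbb M(S)$, the element $f(m)$ lies in $\mathbb N^*(S) = \Hom_{\mathcal R}(\mathbb N, \mathcal S)$ and is itself a natural transformation; so for $n \in \mathbb N(N)$ the value $f(m)(n)$ lies in $\mathcal S(N) = N \otimes_R S = \mathcal N(S)$. Thus the prescription $\tilde f(n)(m) := f(m)(n)$ takes values in the correct abelian group and, for fixed $n$, produces a set-map $\mathbb M(S) \to \mathcal N(S)$ for each $S$.

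Next I would check two naturality conditions. (i) Fix $n \in \mathbb N(N)$ and a morphism $\varphi \colon S \to S'$. Naturality of $f$ gives $\mathbb N^*(\varphi)(f(m)) = f(\varphi_* m)$ in $\Hom_{\mathcal R}(\mathbb N, \mathcal S')$. Evaluating at $n$, and noting that $\mathbb N^*(\varphi)$ is post-composition with the natural transformation $\mathcal S \to \mathcal S'$ whose component at $N$ is $\mathrm{id}_N \otimes \varphi$, yields $(\mathrm{id}_N \otimes \varphi)(f(m)(n)) = f(\varphi_* m)(n)$. Since $\mathcal N(\varphi)$ is also $\mathrm{id}_N \otimes \varphi$, this is exactly the naturality square for $\tilde f(n) \colon \mathbb M \to \mathcal N$ in the variable $S$, so $\tilde f(n) \in \mathbb M^*(N)$. (ii) For a morphism $\psi \colon N \to N'$, naturality of $f(m) \colon \mathbb N \to \mathcal S$ in $N$ gives $(\psi \otimes \mathrm{id}_S)(f(m)(n)) = f(m)(\psi_* n)$, which, reading both sides through the formula $\tilde f(n)(m) = f(m)(n)$, is the naturality of $\tilde f \colon \mathbb N \to \mathbb M^*$ in $N$.

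Finally, $f \mapsto \tilde f$ is manifestly additive. Applying the same construction to $\tilde f$ (with the roles of $\mathbb M$ and $\mathbb N$ swapped) returns $f$, since by definition $\tilde{\tilde f}(m)(n) = \tilde f(n)(m) = f(m)(n)$. The only obstacle in this argument is bookkeeping: keeping track of the two independent naturality variables $S$ and $N$ and of the canonical identification $\mathcal S(N) = \mathcal N(S) = N \otimes_R S$. Once these identifications are fixed, both naturality checks collapse to a single instance of the naturality of $f$, and the proposition follows.
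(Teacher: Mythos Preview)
Your proof is correct. The paper itself provides no proof of this proposition, simply citing \cite[2.11]{Pedro4}; your direct verification of the two naturality conditions (using, respectively, naturality of $f$ and naturality of each $f(m)$) together with the evident symmetry giving the inverse is exactly the expected argument for this adjunction-type identity.
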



\begin{definition} Let $\mathbb M$ be  an $\mathcal R$-module. We will  say that
$\mathbb M^*$ is the dual (right) $\RR$-module of $\mathbb M$.
We will  say that  an $\mathcal R$-module  ${\mathbb M}$ is reflexive if the natural morphism $${\mathbb M}\to {\mathbb M}^{**}, m\mapsto \tilde m \text{ (for any }m\in \mathbb M(S))$$ 
 is an isomorphism, where $\tilde m_{S'}(w):=w_S(m)$ (for any $w\in \mathbb M^*(S')=\Hom_{\RR}(\mathbb M,\mathcal S')$).\end{definition}

\begin{proposition} \label{trivialon} Let $\mathbb M$ and $\mathbb M'$ be reflexive functors of $\RR$-modules, $f\colon \mathbb M\to \mathbb M'$ a morphism of $\RR$-modules and $f^*\colon \mathbb N^*\to \mathbb M^*$ the dual morphism.
Then, $\Ker f =(\Coker f^*)^*$.

\end{proposition}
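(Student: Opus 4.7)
The plan is to apply the left-exact contravariant functor $(-)^*$ to the defining right-exact sequence of $\Coker f^*$ and then use reflexivity to identify the result with $\Ker f$.

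First I would start from the canonical exact sequence of right $\RR$-modules
$$(\mathbb{M}')^* \xrightarrow{f^*} \mathbb{M}^* \longrightarrow \Coker f^* \longrightarrow 0$$
(reading the $\mathbb{N}^*$ in the statement as $(\mathbb{M}')^*$). For every $R$-module $S$, apply $\Hom_\RR(-,\mathcal{S})$. Since kernels and cokernels of $\RR$-modules are computed pointwise and $\Hom_\RR(-,\mathcal S)$ is left exact in its first argument, I obtain an exact sequence of abelian groups
$$0 \longrightarrow (\Coker f^*)^*(S) \longrightarrow \mathbb{M}^{**}(S) \xrightarrow{f^{**}_S} (\mathbb{M}')^{**}(S),$$
natural in $S$. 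Assembling these as $S$ varies gives an exact sequence of $\RR$-modules
$$0 \longrightarrow (\Coker f^*)^* \longrightarrow \mathbb{M}^{**} \xrightarrow{f^{**}} (\mathbb{M}')^{**}.$$

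To conclude, I would invoke reflexivity: the canonical isomorphisms $\mathbb{M}\cong \mathbb{M}^{**}$ and $\mathbb{M}'\cong(\mathbb{M}')^{**}$ transport $f^{**}$ to $f$, as one checks directly from the formula $\tilde m_{S'}(w)=w_S(m)$ together with Proposition~\ref{trivial}. Therefore
$$(\Coker f^*)^* \;=\; \Ker f^{**} \;=\; \Ker f.$$

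The only step requiring any genuine verification is the compatibility statement $f^{**}=f$ under the biduality isomorphisms; this is essentially formal and is already implicit in the definition of the canonical map $\mathbb M \to \mathbb M^{**}$. Everything else is a mechanical consequence of the pointwise computation of (co)kernels in the functor category and the left exactness of $\Hom_\RR(-,\mathcal S)$.
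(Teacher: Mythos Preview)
Your argument is correct and is the standard one: apply the left-exact functor $(-)^*$ to the cokernel sequence and then use reflexivity and naturality of the biduality map to identify $f^{**}$ with $f$. The paper states Proposition~\ref{trivialon} without proof, so there is nothing to compare against; your write-up is precisely the routine verification the authors are implicitly leaving to the reader, and your remark correcting the typo $\mathbb N^*$ to $(\mathbb M')^*$ is also apt.
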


\subsection{Quasi-coherent modules}

\begin{definition} Let $M$ (resp. $N$,  $V$, etc.) be a right $R$-module. We will  denote by  ${\mathcal M}$  (resp. $\mathcal N$, $\mathcal V$, etc.)   the $\mathcal R$-module defined by ${\mathcal M}({S}) := M \otimes_R {S}$ (resp. $\mathcal N({S}):=N\otimes_R {S}$,  $\mathcal V({S}):=V\otimes_R {S}$, etc.). $\mathcal M$  will be called the quasi-coherent $\mathcal R$-module associated with $M$.
\end{definition}

\begin{proposition}  \cite[2.4]{Pedro4} The functors
$$\aligned \text{Category of right $R$-modules } & \to \text{ Category of quasi-coherent $\mathcal R$-modules }\\ M & \mapsto \mathcal M\\ \mathcal M(R) & \leftarrow\!\shortmid \mathcal M\endaligned$$
stablish an equivalence  of categories. In particular,
$${\rm Hom}_{\mathcal R} ({\mathcal M},{\mathcal M'}) = {\rm Hom}_R (M,M').$$
\end{proposition}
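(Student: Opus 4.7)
The plan is to show the two assignments $M\mapsto \mathcal{M}$ and $\mathbb{M}\mapsto \mathbb{M}(R)$ are mutually quasi-inverse functors. Essential surjectivity of $M\mapsto\mathcal{M}$ is immediate from the definition, since a quasi-coherent $\mathcal{R}$-module is by definition one of the form $\mathcal{M}$ for some right $R$-module $M$. So the heart of the proof is (a) reconstructing $M$ from $\mathcal{M}$ as $\mathcal{M}(R)$ together with its right $R$-module structure, and (b) establishing the Hom identification $\Hom_{\mathcal{R}}(\mathcal{M},\mathcal{M}')=\Hom_R(M,M')$; this last gives full faithfulness, and together with (a) yields the equivalence.

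First I would verify (a). As abelian groups, $\mathcal{M}(R)=M\otimes_R R\cong M$ via $m\otimes r\mapsto mr$. To recover the right $R$-action on $M$ from the functor $\mathcal{M}$ alone, observe that for each $r\in R$ the map $\rho_r\colon R\to R$, $\rho_r(r')=r'r$, is a morphism of left $R$-modules (left and right multiplication commute). Applying $\mathcal{M}$ gives an endomorphism $\mathcal{M}(\rho_r)=\mathrm{id}_M\otimes \rho_r$ of $\mathcal{M}(R)$ which, under the identification $\mathcal{M}(R)\cong M$, is precisely right multiplication by $r$. Naturality of $m\otimes r\mapsto mr$ in $M$ turns this into a natural isomorphism of right $R$-modules $\mathcal{M}(R)\cong M$.

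The main content is (b). Given $f\in\Hom_R(M,M')$, the collection $\phi^f_S:=f\otimes\mathrm{id}_S$ is easily seen to be a natural transformation $\mathcal{M}\to\mathcal{M}'$ whose components are additive. Conversely, given a morphism of $\mathcal{R}$-modules $\phi\colon\mathcal{M}\to\mathcal{M}'$, set $f:=\phi_R\colon M\to M'$ (via (a)). Right $R$-linearity of $f$ follows from naturality of $\phi$ with respect to the $\rho_r$. The crucial step is showing $\phi=\phi^f$, i.e.\ that $\phi$ is determined by its value at $R$. For any left $R$-module $S$ and any $s\in S$, the map $\sigma_s\colon R\to S$, $\sigma_s(r)=rs$, is left $R$-linear. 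Naturality of $\phi$ applied to $\sigma_s$ gives, for every $m\in M$,
$$\phi_S(m\otimes s)=\phi_S\bigl((\mathrm{id}_M\otimes\sigma_s)(m\otimes 1)\bigr)=(\mathrm{id}_{M'}\otimes\sigma_s)\bigl(\phi_R(m\otimes 1)\bigr)=f(m)\otimes s.$$
By additivity of $\phi_S$, this equality extends from simple tensors to all of $M\otimes_R S$, so $\phi_S=f\otimes\mathrm{id}_S=\phi^f_S$. Hence $\phi\mapsto\phi_R$ and $f\mapsto\phi^f$ are mutually inverse.

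The argument is essentially a Yoneda-style computation: the only subtlety is keeping track of left vs.\ right structures and verifying that the morphism $\sigma_s$ is indeed left $R$-linear (so that naturality is applicable). Once (a) and (b) are in hand, the equivalence of categories — and in particular the final Hom identity — is immediate.
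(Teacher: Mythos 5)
Your argument is correct and complete: the reconstruction of the right $R$-action on $\mathcal M(R)$ via the left-linear maps $\rho_r$, and the determination of $\phi$ by $\phi_R$ via naturality along $\sigma_s\colon R\to S$, $r\mapsto rs$, is exactly the standard Yoneda-style proof. The paper itself does not reproduce a proof (it cites \cite[2.4]{Pedro4}), and your argument is the expected one for that result, so there is nothing to flag.
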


%
%

For another, slightly different, version of this proposition see \cite[1.12]{Amel}.

Let $f_R\colon M\to N$ be a morphism of $R$-modules and $f\colon \mathcal M\to \mathcal N$
the associated morphism of $\mathcal R$-modules. Let $C=\Coker f_R$, then $\Coker f=\mathcal C$, which is a quasi-coherent module.

Let $\mathbb M$ be an $\RR$-module. Observe that $\mathbb M(R)$ is naturally a right $R$-module: Given $r\in R$, consider the morphism of $R$-modules
$\cdot r\colon R\to R$. Then, 
$$m\cdot r:=\mathbb M(\cdot r)(m), \text{ for any } m\in \mathbb M(R).$$

\begin{proposition} \label{tercer}
For every  ${\mathcal R}$-module $\mathbb M$ and every right $R$-module $M$, it is satisfied that
$${\rm Hom}_{\mathcal R} ({\mathcal M}, \mathbb M) = {\rm Hom}_R (M, \mathbb M(R)),\, f\mapsto f_R.$$
\end{proposition}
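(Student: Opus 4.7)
The plan is to build an inverse to the restriction map $f \mapsto f_R$ directly, then check that the two assignments are mutually inverse. The forward direction is immediate: given a morphism of $\mathcal R$-modules $f\colon \mathcal M\to\mathbb M$, the component $f_R\colon \mathcal M(R)=M\otimes_R R=M\to \mathbb M(R)$ is a morphism of right $R$-modules because naturality of $f$ with respect to the morphism $\cdot r\colon R\to R$ (for each $r\in R$) intertwines the right multiplication on $M$ with the right $R$-action on $\mathbb M(R)$ defined above the proposition.

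For the inverse, given a right $R$-module morphism $g\colon M\to\mathbb M(R)$, I would define $\tilde g\colon \mathcal M\to\mathbb M$ component-wise as follows. For each left $R$-module $S$ and each $s\in S$, consider the $R$-linear map $\cdot s\colon R\to S$, $r\mapsto rs$, and the induced map $\mathbb M(\cdot s)\colon \mathbb M(R)\to\mathbb M(S)$. Define
$$\tilde g_S\colon M\otimes_R S\to \mathbb M(S),\qquad m\otimes s\mapsto \mathbb M(\cdot s)(g(m)),$$
by factoring through the tensor product. The $\mathbb Z$-bilinearity is clear; the $R$-balancing amounts to showing $\mathbb M(\cdot(rs))(g(m))=\mathbb M(\cdot s)(g(mr))$, and this follows from the identity $\cdot (rs)=(\cdot s)\circ(\cdot r)$ of morphisms $R\to S$ combined with $g(mr)=g(m)\cdot r=\mathbb M(\cdot r)(g(m))$, which uses precisely that $g$ respects the right $R$-action and that the right $R$-module structure on $\mathbb M(R)$ is the one induced by functoriality. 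Naturality of $\tilde g$ in $S$ reduces, for any $R$-module morphism $\varphi\colon S\to S'$, to the identity $\varphi\circ(\cdot s)=\cdot\varphi(s)$ as morphisms $R\to S'$ and functoriality of $\mathbb M$.

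It remains to verify that the two constructions are mutually inverse. The equality $(\tilde g)_R=g$ is immediate: $(\tilde g)_R(m\otimes 1)=\mathbb M(\mathrm{id}_R)(g(m))=g(m)$. Conversely, for $f\colon\mathcal M\to\mathbb M$, naturality of $f$ applied to the morphism $\cdot s\colon R\to S$ gives $f_S(m\otimes s)=f_S((1_M\otimes\cdot s)(m\otimes 1))=\mathbb M(\cdot s)(f_R(m))=\widetilde{f_R}{}_S(m\otimes s)$, so $\widetilde{f_R}=f$. The only real subtlety — hence the step deserving the most care — is the $R$-balancing of $\tilde g_S$ in the tensor product; everything else is formal bookkeeping with the universal property of $\otimes_R$ and functoriality of $\mathbb M$.
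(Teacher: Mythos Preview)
Your argument is correct and complete. The paper actually states this proposition without proof, treating it as standard; the construction you give for the inverse, $m\otimes s\mapsto \mathbb M(\cdot s)(g(m))$, is precisely the one the paper spells out immediately afterward (in the notation $m\cdot s:=\mathbb M(\cdot s)(m)$ and the natural morphism $\mathbb M_{qc}\to\mathbb M$ of Proposition~\ref{tercerb}), so your approach is exactly the intended one.
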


\begin{notation} Let $\mathbb M$ be an $\mathcal R$-module.
We will  denote by $\mathbb M_{qc}$ the quasi-coherent module associated with
the $R$-module $\mathbb M(R)$, that is, $$\mathbb M_{qc}({S}):=\mathbb M(R)\otimes_R S.$$
Given $s\in S$, consider the morphism of $R$-modules $R\overset{\cdot s}\to  S$, $r\mapsto r\cdot s$. Then, we have the morphism $\mathbb M(\cdot s)\colon
\mathbb M(R)\to \mathbb M(S)$, $m\mapsto \mathbb M(\cdot s)(m)=:m\cdot s$.
\end{notation}

\begin{proposition} \cite[2.7]{Pedro4} \label{tercerb} For each  $\mathcal R$-module $\mathbb M$ one has the natural morphism $$\mathbb M_{qc}\to \mathbb M, \,m\otimes s\mapsto m\cdot s,$$ for any $m\otimes s\in \mathbb M_{qc}({S})= \mathbb M(R)\otimes_R S$, and a functorial equality
$$\Hom_{\mathcal R}(\mathcal N,\mathbb M_{qc})=\Hom_{\mathcal R}(\mathcal N,\mathbb M),$$
for any quasi-coherent $\mathcal R$-module $\mathcal N$.
\end{proposition}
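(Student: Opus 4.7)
The plan is to first construct the natural morphism $\mathbb{M}_{qc}\to\mathbb{M}$, then deduce the Hom identity from the previous proposition essentially without further work.

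First I would define, for each left $R$-module $S$, the map
$$\varphi_S\colon \mathbb{M}(R)\otimes_R S\longrightarrow \mathbb{M}(S),\qquad m\otimes s\mapsto \mathbb{M}(\cdot s)(m)=m\cdot s,$$
using the $R$-linear map $\cdot s\colon R\to S$, $r\mapsto rs$. The two things to verify are $R$-bilinearity and functoriality in $S$. Bilinearity reduces to the identity $(\cdot s)\circ(\cdot r)=(\cdot rs)$ of morphisms $R\to S$: applying $\mathbb{M}$ and using the functoriality gives $(m\cdot r)\cdot s=m\cdot(rs)$, which is exactly the relation making $\varphi_S$ factor through $\mathbb{M}(R)\otimes_R S$. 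Functoriality in $S$ is equally routine: for $f\colon S\to S'$ one has $f\circ(\cdot s)=(\cdot f(s))\colon R\to S'$, so applying $\mathbb{M}$ yields $\mathbb{M}(f)(m\cdot s)=m\cdot f(s)$, which is the desired naturality. This already produces the natural transformation $\mathbb{M}_{qc}\to\mathbb{M}$.

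For the Hom equality, the key observation is that Proposition~\ref{tercer} applies to \emph{any} $\mathcal{R}$-module on the right, in particular to $\mathbb{M}$ and to the quasi-coherent module $\mathbb{M}_{qc}$. Evaluating at $R$ we have $\mathbb{M}_{qc}(R)=\mathbb{M}(R)\otimes_R R=\mathbb{M}(R)$, and one checks that $\varphi_R\colon\mathbb{M}_{qc}(R)\to\mathbb{M}(R)$ is precisely this canonical identification (because $\mathbb{M}(\cdot 1)=\operatorname{id}$). Applying Proposition~\ref{tercer} twice we obtain
$$\Hom_{\mathcal R}(\mathcal N,\mathbb M_{qc})=\Hom_R(N,\mathbb M_{qc}(R))=\Hom_R(N,\mathbb M(R))=\Hom_{\mathcal R}(\mathcal N,\mathbb M),$$
where the isomorphism $f\mapsto f_R$ on each side together with the identification $\varphi_R=\operatorname{id}$ shows that the composite equality is exactly the map $g\mapsto \varphi\circ g$ of post-composition with the canonical morphism $\mathbb{M}_{qc}\to\mathbb{M}$.

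There is no real obstacle here: the argument is almost tautological once Proposition~\ref{tercer} is in hand. The only point that requires a moment of care is checking that $\varphi_R$ is the identity under the canonical isomorphism $\mathbb{M}(R)\otimes_R R\cong \mathbb{M}(R)$, so that the two separate applications of Proposition~\ref{tercer} are compatible with the natural morphism just constructed; this is what makes the abstract Hom bijection actually coincide with composition by $\varphi$.
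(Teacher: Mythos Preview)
Your proof is correct and is exactly the natural argument: construct the canonical map by functoriality of $\mathbb M$ applied to $\cdot s\colon R\to S$, and then read off the Hom identity from two applications of Proposition~\ref{tercer} together with $\mathbb M_{qc}(R)=\mathbb M(R)$. Note that the paper does not actually supply a proof here; it merely cites \cite[2.7]{Pedro4}, so there is no in-paper argument to compare against, but your approach is the standard one and would be the expected proof.
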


%
%

Obviously, an $\mathcal R$-module $\mathbb M$ is a quasi-coherent module iff
the natural morphism $\mathbb M_{qc}\to \mathbb M$ is an isomorphism.

\begin{theorem} \cite[2.14]{Pedro4}  \label{prop4}
Let $M$ a right $R$-module and let $M'$ be an $R$-module. Then, $${M} \otimes_{R} {M'}={\Hom}_{\mathcal R} ({\mathcal M^*}, {\mathcal M'}),\, m\otimes m'\mapsto \tilde{m\otimes m'},$$
where $ \tilde{m\otimes m'}(w):=w(m)\otimes m'$, for any $w\in \mathcal M^*$.
\end{theorem}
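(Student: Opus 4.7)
The plan is to reduce the identity to the Yoneda lemma. First, I would use Proposition \ref{tercer}: for any right $R$-module $N$, applying that proposition to $\mathcal M$ (the quasi-coherent $\mathcal R$-module associated with the right $R$-module $M$) and to $\mathbb M = \mathcal N$ (whose value at $R$ is $N$) yields
\[
\mathcal M^*(N) \;=\; \Hom_{\mathcal R}(\mathcal M,\mathcal N) \;=\; \Hom_R(M,N),
\]
and this identification is functorial in $N$. Hence, as a right $\mathcal R$-module, $\mathcal M^*$ coincides with the representable functor $\Hom_R(M,-)$ on the category of right $R$-modules. On the other hand, by definition $\mathcal M'(N) = N\otimes_R M'$ for every right $R$-module $N$.

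A morphism of right $\mathcal R$-modules $f\colon \mathcal M^* \to \mathcal M'$ is therefore nothing but a natural transformation
\[
\Hom_R(M,-) \;\longrightarrow\; (-\otimes_R M')
\]
of functors from right $R$-modules to abelian groups. By the Yoneda lemma, applied to the covariant functor $G(N) := N\otimes_R M'$, such transformations are in bijection with $G(M) = M\otimes_R M'$. The Yoneda bijection sends $f$ to $f_M(\mathrm{id}_M)\in M\otimes_R M'$; conversely, $\xi = \sum_i m_i\otimes m'_i$ is sent to the transformation $w\mapsto \sum_i w(m_i)\otimes m'_i$, where we view $w\in \mathcal M^*(N)$ as an element of $\Hom_R(M,N)$. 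This is exactly the formula $\widetilde{m\otimes m'}(w) = w(m)\otimes m'$ appearing in the statement.

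There is no real obstacle beyond bookkeeping: one must check the left/right conventions to confirm that $\mathcal M^*$, evaluated on right $R$-modules, is represented by $M$, and that $\mathcal M'$, on the same category, is $-\otimes_R M'$. Once these two identifications are in place, the theorem is an immediate instance of Yoneda, and the explicit reverse map in Yoneda matches the formula $\widetilde{m\otimes m'}$ given in the statement, so no further verification of the inverse is required.
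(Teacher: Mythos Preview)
Your argument is correct. The paper itself does not prove this theorem---it is quoted from \cite[2.14]{Pedro4}---so there is no proof in the paper to compare against. Your reduction to the Yoneda lemma via Proposition~\ref{tercer}, identifying the right $\mathcal R$-module $\mathcal M^*$ with the representable functor $\Hom_R(M,-)$ and $\mathcal M'$ with $(-)\otimes_R M'$, is the natural way to see the result, and the explicit Yoneda inverse you write down coincides with the map $m\otimes m'\mapsto\widetilde{m\otimes m'}$ in the statement.
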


%

\begin{note} \label{2.12N}  It is easy to prove that the morphism $$f=\sum_{i=1}^n m_i\otimes m'_i\in {\Hom}_{\mathcal R} ({\mathcal M^*}, {\mathcal M'})={M} \otimes_{R} { M'},$$ is equal to the composite morphism
$\mathcal M^*\overset g\to \mathcal L\overset h\to \mathcal M'$, where $L$ is the free module of basis $\{l_1,\ldots,l_n\}$, $g:=\sum_{i} m_i\otimes l_i\in
{\Hom}_{\mathcal R} ({\mathcal M^*}, {\mathcal L})={M} \otimes_{R} { L}$
and $h(l_i):=m'_i$ for any $i$. Observe that $\Ima f\subseteq \Ima h$.

\end{note}

%
%

If we make $\mathcal M'=\mathcal R$ in the previous theorem, we obtain the following theorem.

\begin{theorem} \cite[2.16]{Pedro4}  \label{reflex}
Let $M$ be a right $R$-module. Then, the canonical morphism $${\mathcal M}\to {\mathcal M^{**}},$$ is an isomorphism. That is,  quasi-coherent modules are reflexive.
\end{theorem}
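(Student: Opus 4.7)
The plan is to observe that Theorem \ref{prop4}, applied with $M'=S$ running over all $R$-modules, already supplies the desired isomorphism: there is nothing new to compute; the content is a naturality/identification check.

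First I would unfold the definitions. By definition $\mathcal M^{**}(S)=\Hom_{\mathcal R}(\mathcal M^{*},\mathcal S)$, and by Theorem~\ref{prop4} (with the right $R$-module $M$ and the $R$-module $S$ in place of $M'$) there is a natural identification
$$
\mathcal M(S)\;=\;M\otimes_R S\;\xrightarrow{\;\sim\;}\;\Hom_{\mathcal R}(\mathcal M^{*},\mathcal S)\;=\;\mathcal M^{**}(S),
\qquad m\otimes s\;\mapsto\;\widetilde{m\otimes s},
$$
where, per the note following Theorem~\ref{prop4}, $\widetilde{m\otimes s}(w)=w(m)\otimes s$ for every $w\in\mathcal M^{*}$. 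The map is visibly natural in $S$ since Theorem~\ref{prop4} is a functorial equality, so it gives a morphism of $\mathcal R$-modules that is an isomorphism at every $S$.

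Next I would check that this coincides with the canonical morphism $\mathcal M\to\mathcal M^{**}$, $m\mapsto\widetilde{m}$ from the definition of reflexivity, namely $\widetilde{x}_{S'}(w):=w_S(x)$ for $x\in\mathcal M(S)$ and $w\in\mathcal M^{*}(S')=\Hom_{\mathcal R}(\mathcal M,\mathcal S')$. For $x=m\otimes s\in M\otimes_R S=\mathcal M(S)$ and such a $w$, we have $w_S(m\otimes s)=w_R(m)\otimes s$ (because $w$ is a morphism of $\mathcal R$-modules and $m\otimes s$ is the image of $m\in\mathcal M(R)$ under $R\xrightarrow{\cdot s}S$), which matches $\widetilde{m\otimes s}(w)$ as described above. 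Hence the isomorphism produced by Theorem~\ref{prop4} is precisely the canonical map, proving it is an isomorphism.

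The main obstacle is not technical depth but careful bookkeeping of sides and of the two distinct descriptions of the canonical map: one coming from Theorem~\ref{prop4} (an element-level formula $m\otimes s\mapsto(w\mapsto w(m)\otimes s)$) and one from the general definition of reflexivity ($x\mapsto(w\mapsto w_S(x))$). Once these are reconciled — essentially by the functoriality of $w$ along the scalar map $R\xrightarrow{\cdot s}S$ — the theorem follows immediately, and no further construction is required.
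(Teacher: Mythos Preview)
Your proposal is correct and follows the paper's own approach: the paper derives Theorem~\ref{reflex} directly from Theorem~\ref{prop4} (the text immediately preceding the statement says so). Your write-up is in fact more careful than the paper's one-line remark, since you explicitly let $M'=S$ range over all $R$-modules and verify that the isomorphism of Theorem~\ref{prop4} agrees with the canonical double-dual map via the functoriality of $w$ along $R\xrightarrow{\cdot s}S$.
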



\begin{definition} Let $M$ be an $R$-module. $\mathcal M^*$  will be called the $\mathcal R$-module scheme associated with $M$.
\end{definition}

\begin{theorem} \cite[2.10]{Pedro4}  \label{L5.11} Let $\{\mathbb M_i\}$ be a direct system of $\mathcal R$-modules. Then,
$$\Hom_{\mathcal R}(\mathcal N^*,\ilim{i} \mathbb M_i)=\ilim{i}\Hom_{\mathcal R}(\mathcal N^*, \mathbb M_i),$$
for any $R$-module $N$.
\end{theorem}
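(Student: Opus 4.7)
My plan is to reduce the statement to Yoneda's lemma. First, I identify $\mathcal N^*$ with the representable functor $\Hom_R(N,-)$ on the category of $R$-modules: by definition $\mathcal N^*(S)=\Hom_{\mathcal R}(\mathcal N,\mathcal S)$, and applying Proposition \ref{tercer} with the quasi-coherent target $\mathcal S$ gives $\Hom_{\mathcal R}(\mathcal N,\mathcal S)=\Hom_R(N,\mathcal S(R))=\Hom_R(N,S)$. So, regarded as a functor from $R$-modules to abelian groups, $\mathcal N^*$ is precisely $\Hom_R(N,-)$.

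Second, I invoke the Yoneda lemma: for every $\mathcal R$-module $\mathbb M$ the natural map
$$\Hom_{\mathcal R}(\mathcal N^*,\mathbb M)\to \mathbb M(N),\quad f\mapsto f_N(\mathrm{id}_N),$$
is a bijection, with inverse sending $m\in\mathbb M(N)$ to the natural transformation that, on $S$, carries $\phi\in\Hom_R(N,S)=\mathcal N^*(S)$ to $\mathbb M(\phi)(m)\in\mathbb M(S)$. Naturality in $S$ is immediate from the functoriality of $\mathbb M$, and the two constructions are manifestly mutually inverse.

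Finally, the theorem follows because direct limits of $\mathcal R$-modules are computed pointwise (as recalled in the remark after Notation \ref{nota2.1}). Chaining the identifications,
$$\Hom_{\mathcal R}(\mathcal N^*,\ilim{i}\mathbb M_i)=\bigl(\ilim{i}\mathbb M_i\bigr)(N)=\ilim{i}\mathbb M_i(N)=\ilim{i}\Hom_{\mathcal R}(\mathcal N^*,\mathbb M_i),$$
which is the desired equality. There is no real obstacle: the argument is a Yoneda identification combined with the pointwise description of direct limits, and it would yield the same formula with $\ilim{i}$ replaced by any colimit (or indeed any limit) commuting with evaluation at $N$.
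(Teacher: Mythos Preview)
Your proof is correct. The paper does not supply its own argument for Theorem~\ref{L5.11}; it simply cites \cite[2.10]{Pedro4}. Your Yoneda-based reduction---identifying $\mathcal N^*$ with the representable functor $\Hom_R(N,-)$ via Proposition~\ref{tercer}, then using that colimits of $\mathcal R$-modules are computed pointwise---is exactly the standard route and is presumably what lies behind the cited reference as well.
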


%

\subsection{Dual module of a direct product of $\mathcal R$-modules}

\begin{proposition} \label{reFPro2} Let $\{\mathbb M_i\}_{i\in I}$ be a set of  $\mathcal R$-modules and let $\mathbb N$ be an $\mathcal R$-module that preserves direct sums. Then, the natural  morphism
$$\oplus_{i\in I} \Hom_{\mathcal R}(\mathbb  M_i,\mathbb N)\to \Hom_{\mathcal R}(\prod_{i\in I} \mathbb  M_i,\mathbb N), (f_i)_{i\in I}\mapsto \sum_{i\in I} f_i,
$$ is an isomorphism, where  $(\sum_{i\in I} f_i)(m_i):=\sum_{i\in I} f_i(m_i)$ for any $(m_i)\in \prod_{i\in I} \mathbb M_i$.

\end{proposition}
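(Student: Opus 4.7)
The plan is to prove injectivity and surjectivity separately, with the substantive work in surjectivity. Writing $\iota_j : \mathbb M_j \hookrightarrow \prod_i \mathbb M_i$ for the canonical inclusion, evaluation at a section $m_j \in \mathbb M_j(S)$ gives $(\sum_i f_i) \circ \iota_j = f_j$, so the kernel of the stated map contains only the zero family --- this handles injectivity.

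For surjectivity, fix $f \in \Hom_{\mathcal R}(\prod_i \mathbb M_i, \mathbb N)$ and set $f_i := f \circ \iota_i$. I first argue that only finitely many $f_i$ are nonzero. Suppose otherwise and pick a countably infinite sequence of distinct indices $i_k \in I$ with $f_{i_k} \neq 0$, witnessed by $m_k \in \mathbb M_{i_k}(S_k)$ with $f_{i_k}(m_k) \neq 0$. Set $S := \oplus_k S_k$ and form the ``diagonal'' $\bar m \in \prod_i \mathbb M_i(S)$ whose $i_k$-th coordinate is the image of $m_k$ under the inclusion $S_k \hookrightarrow S$ (and other coordinates zero). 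By hypothesis $\mathbb N(S) = \oplus_k \mathbb N(S_k)$, so $f(\bar m)$ has finite support; yet naturality of $f$ together with $\pi_k \circ (S_\ell \hookrightarrow S) = \delta_{k\ell}\,\mathrm{id}$ gives $\mathbb N(\pi_k)(f(\bar m)) = f(\iota_{i_k}(m_k)) = f_{i_k}(m_k) \neq 0$, which is exactly the $k$-th component --- contradiction.

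Next I establish the identity $f(m) = \sum_i f_i(m_i)$ for every $m = (m_i) \in \prod_i \mathbb M_i(S)$, via a diagonal-then-collapse construction. Set $S' := \oplus_{i \in I} S$, with canonical inclusions $\iota'_i : S \to S'$ and sum map $\sigma : S' \to S$ satisfying $\sigma \circ \iota'_i = \mathrm{id}_S$. Defining $\bar m' \in \prod_i \mathbb M_i(S')$ by $\bar m'_i := \mathbb M_i(\iota'_i)(m_i)$ gives $\mathbb M(\sigma)(\bar m') = m$, whence $f(m) = \mathbb N(\sigma)(f(\bar m'))$. Since $\mathbb N(S') = \oplus_i \mathbb N(S)$, the element $f(\bar m')$ is a finite sum, and its $j$-th component, read off via $\mathbb N(\pi'_j)$, computes to $f_j(m_j)$; because $\mathbb N(\sigma)$ is the sum map $\oplus_i \mathbb N(S) \to \mathbb N(S)$, we conclude $f(m) = \sum_j f_j(m_j)$. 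Combined with the finiteness of $\{i : f_i \neq 0\}$, this exhibits $f$ as the image of a well-defined $(f_i) \in \oplus_i \Hom_{\mathcal R}(\mathbb M_i, \mathbb N)$.

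The main obstacle is the finiteness claim: one must engineer a single element of the infinite product whose image under $f$ simultaneously detects infinitely many indices. The preservation of direct sums by $\mathbb N$ is exactly what confines $f(\bar m)$ to finite support, producing the contradiction; the rest of the argument is naturality of $f$ and functoriality of $\mathbb N$ applied to the inclusion-and-sum maps of an $I$-indexed direct sum.
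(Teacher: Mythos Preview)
Your proof is correct and follows the same core idea as the paper's: build a diagonal element over $\oplus_j S_j$, invoke $\mathbb N(\oplus_j S_j)=\oplus_j \mathbb N(S_j)$ to force finite support, and extract a contradiction via the projections. The paper packages the finiteness step through Yoneda and module schemes --- it factors $f$ through $g\colon \prod_{j\in J}\mathcal S_j^*\to \prod_{j\in J}\mathbb M_j$ and uses $\Hom_{\mathcal R}\bigl((\oplus_j \mathcal S_j)^*,\mathbb N\bigr)=\oplus_j \mathbb N(S_j)$ --- whereas you argue directly with elements and naturality; this is the same content, stated more elementarily and without invoking the representability of $\mathbb M_j(S_j)$. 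One point worth flagging: the paper's own proof is actually truncated at the last step, where one must verify $f=\sum_{j\in J} f_j$ once $J$ is known to be finite; your diagonal-then-collapse construction over $S'=\oplus_{I} S$ supplies exactly this missing computation.
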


\begin{proof}
The morphism $(f_i)_{i\in I}\mapsto \sum_{i\in I} f_i$
is obviously injective.

For any $i'\in I$, we have the obvious inclusion morphism $\mathbb M_{i'}\subseteq \prod_{i\in I} \mathbb M_i$.

Given $f\in \Hom_{\mathcal R}(\prod_{i\in I} \mathbb  M_i,\mathbb N)$, put $J:=\{i\in I\colon f_i:=f_{|\mathbb  M_i}\neq 0\}$. For each $j\in J$, there exist   an $R$-module $S_j$  and $m_j\in\mathbb M_j(S_j)$  such that $0\neq {f_j}_{S_j}(m_j)\in N\otimes_R S_j$. $\Hom_{\RR}(\mathcal S^*_j,\mathbb M_j)=\mathbb M_j(S_j)$, by the Yoneda Lemma.  Hence, we have the morphism $g_j\colon \mathcal S^*_j\to \mathbb M_j$ defined by
$m_j$. Consider the morphism
$$g\colon \prod_{j\in J} \mathcal S_j^*\to \prod_{j\in J}\mathbb M_j,\, g((w_j)_{j\in J}):=(g_j(w_j))_{j\in J}.$$ 
Put $h:=f\circ g$.   
On one hand  $h_{|\mathcal S_j^*}(Id_j)= {f_j}_{S_j}(m_j)\neq 0$, for any $j\in J$, where $Id_j\in \mathcal S^*_j(S_j)=\Hom_{R}(S_j,S_j)$ is the identity morphism. On the other hand
$$\aligned \Hom_{\RR}( \prod_{j\in J} \mathcal S_j^*,\mathbb N) & =
\Hom_{\RR}( (\oplus_{j\in J} \mathcal S_j)^*,\mathbb N)=\mathbb N(\oplus_{j\in J}  S_j)\\ & =\oplus_{j\in J}  \mathbb N(S_j ) =\oplus_{j\in J}\Hom_{\RR}(  \mathcal S_j^*,\mathbb N).\endaligned$$
Hence, $h=\sum_{j\in J} h_{|\mathcal S^*_j}$, where $ h_{|\mathcal S^*_j}=0$, for  all $j\in J$ except for a finite number of them.
Then,  $\# J<\infty$. 

Finally, let us prove that $f=\sum_{j\in J} f_j$: Let $m=(m_i)\in \prod_{i\in I} \mathbb M_i(S)$, Consider  

\end{proof}

\begin{corollary} \label{reFPro} Let $\{\mathbb M_i\}_{i\in I}$ be a set of  $\mathcal R$-modules. Then, $(\prod_{i\in I}\mathbb  M_i)^*=\oplus_{i\in I}\mathbb  M_i^*$.
\end{corollary}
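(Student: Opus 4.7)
The plan is to deduce this corollary as an immediate consequence of Proposition \ref{reFPro2}, evaluated functorially in the second argument. Fix an $R$-module $S$ and set $\mathbb{N} := \mathcal{S}$, the quasi-coherent $\RR$-module associated with $S$. Since $\mathcal{S}(T) = S\otimes_R T$ and tensor product commutes with arbitrary direct sums, the functor $\mathcal{S}$ preserves direct sums, so the hypothesis of Proposition \ref{reFPro2} is satisfied.

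Applying that proposition gives a natural isomorphism
$$\oplus_{i\in I} \Hom_{\RR}(\mathbb{M}_i, \mathcal{S}) \xrightarrow{\sim} \Hom_{\RR}\bigl(\prod_{i\in I} \mathbb{M}_i,\, \mathcal{S}\bigr).$$
By the definition of the dual module, the left-hand side is $\oplus_{i\in I} \mathbb{M}_i^*(S)$ and the right-hand side is $(\prod_{i\in I} \mathbb{M}_i)^*(S)$. One must check that this isomorphism is natural in $S$ (i.e.\ functorial with respect to morphisms $S\to S'$), which is routine since the isomorphism $(f_i)\mapsto \sum_i f_i$ of Proposition \ref{reFPro2} is evidently natural in $\mathbb{N}$. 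Passing to the functor of $\RR$-modules, this yields the stated equality $(\prod_{i\in I}\mathbb{M}_i)^* = \oplus_{i\in I} \mathbb{M}_i^*$.

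There is essentially no obstacle here; the entire content is already carried by Proposition \ref{reFPro2}, and the corollary is the restatement obtained by running $S$ through all $R$-modules.
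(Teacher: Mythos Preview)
Your proposal is correct and is exactly the intended argument: the paper states the corollary without proof, and the only plausible derivation is to apply Proposition~\ref{reFPro2} with $\mathbb N=\mathcal N$ (quasi-coherent, hence direct-sum preserving) for each right $R$-module $N$ and observe naturality. One cosmetic remark: in the paper's conventions $\mathbb M^*$ is evaluated at right $R$-modules $N$ (so write $\mathbb N=\mathcal N$ rather than $\mathcal S$), but this is purely notational in view of Remark~2.1.
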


\begin{corollary} \label{2.27} Let $\{\mathbb M_i\}_{i\in I}$ be a set of reflexive $\RR$-modules. Then, $\oplus_i\mathbb M_i$ and  $\prod_i\mathbb M_i$ are reflexive $\RR$-modules. \end{corollary}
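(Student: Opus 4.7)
The plan is to reduce the reflexivity of $\oplus_i \mathbb M_i$ and $\prod_i \mathbb M_i$ to Corollary \ref{reFPro} together with its ``symmetric'' partner, namely the calculation $(\oplus_i \mathbb N_i)^*=\prod_i \mathbb N_i^*$. The key observation is that Corollary \ref{reFPro} swaps products and direct sums under duality in one direction, while a straightforward use of the universal property of the direct sum gives the swap in the other direction. Applied twice, double dualization should bring us back to the original object.

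First I would establish the companion formula: for any set $\{\mathbb N_i\}_{i\in I}$ of $\mathcal R$-modules and any $\mathcal R$-module $\mathbb L$, the universal property of the direct sum yields
$$\Hom_{\mathcal R}(\oplus_{i\in I}\mathbb N_i,\mathbb L)=\prod_{i\in I}\Hom_{\mathcal R}(\mathbb N_i,\mathbb L).$$
Specializing to $\mathbb L=\mathcal S$ and letting $S$ vary, this gives $(\oplus_{i\in I}\mathbb N_i)^*=\prod_{i\in I}\mathbb N_i^*$, since direct products of $\mathcal R$-modules are computed pointwise. Combining this with Corollary \ref{reFPro} (applied to the family $\{\mathbb M_i^*\}_{i\in I}$), I get
$$(\oplus_{i\in I}\mathbb M_i)^{**}=\bigl(\prod_{i\in I}\mathbb M_i^*\bigr)^{*}=\oplus_{i\in I}\mathbb M_i^{**}=\oplus_{i\in I}\mathbb M_i,$$
where the last equality uses reflexivity of each $\mathbb M_i$. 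Similarly, Corollary \ref{reFPro} gives $(\prod_{i\in I}\mathbb M_i)^{*}=\oplus_{i\in I}\mathbb M_i^{*}$, and then the companion formula together with reflexivity of the $\mathbb M_i$'s yields
$$(\prod_{i\in I}\mathbb M_i)^{**}=\bigl(\oplus_{i\in I}\mathbb M_i^*\bigr)^{*}=\prod_{i\in I}\mathbb M_i^{**}=\prod_{i\in I}\mathbb M_i.$$

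The main thing to verify is that these composed isomorphisms agree with the canonical evaluation morphisms $\mathbb M\to \mathbb M^{**}$, $m\mapsto \tilde m$, from the definition of reflexivity; otherwise we have only proved that there exists some isomorphism, not that the canonical one is an isomorphism. This is where I expect the bookkeeping, rather than any genuine obstacle, to lie: one has to track, through each of the four identifications in the two displayed chains, a ``generic'' element $(m_i)$ of $\oplus\mathbb M_i(S)$ or $\prod\mathbb M_i(S)$ and check that it is sent to its canonical image in the double dual. The reflexivity hypothesis on each $\mathbb M_i$ enters precisely here, because it ensures that $m_i\in\mathbb M_i(S)$ and $\tilde m_i\in\mathbb M_i^{**}(S)$ are the same data. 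Once that naturality check is done, both $\oplus_i\mathbb M_i\to (\oplus_i\mathbb M_i)^{**}$ and $\prod_i\mathbb M_i\to (\prod_i\mathbb M_i)^{**}$ are isomorphisms, and the corollary follows.
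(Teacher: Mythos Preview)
Your proposal is correct and is exactly the argument the paper has in mind: the corollary is stated without proof immediately after Corollary~\ref{reFPro}, so the intended justification is precisely the combination of $(\prod_i\mathbb M_i)^*=\oplus_i\mathbb M_i^*$ with the standard identity $(\oplus_i\mathbb N_i)^*=\prod_i\mathbb N_i^*$, applied twice together with reflexivity of each $\mathbb M_i$. Your attention to the naturality of the canonical map $\mathbb M\to\mathbb M^{**}$ is appropriate and routine, and nothing further is needed.
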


%
%
%
%
%
%
%
%
%
%

\section{Functors that preserve direct limits}

\begin{definition} Let $\mathbb M$ be an $\RR$-module. We will  say that $\mathbb M$
preserves direct limits if  $\mathbb M(\ilim{i}S_i)=\ilim{i}\mathbb M( S_i)$
for every direct system of  $R$-modules  $\{S_i\}_{i\in I}$.
\end{definition}

\begin{example} Quasi-coherent modules preserve direct limits.

\end{example}

\begin{proposition} \label{1.9} Let $P$ be a finitely presented (right) module and $\{\mathbb M_i\}$ a direct system of $\mathcal R$-modules. Then,
$$\Hom_{\mathcal R}(\mathcal P,\ilim{i} \mathbb M_i)=\ilim{i}
\Hom_{\mathcal R}(\mathcal P,\mathbb M_i).$$
In particular, $\mathcal P^*$ preserves direct limits.\end{proposition}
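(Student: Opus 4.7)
The plan is to reduce the $\RR$-module statement to the classical fact that $\Hom_R(P,-)$ commutes with direct limits when $P$ is finitely presented, by invoking the hom-adjunction for quasi-coherent modules (Proposition \ref{tercer}).

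First, I would apply Proposition \ref{tercer} to rewrite both sides in terms of honest $R$-module homs. The left side becomes
$$\Hom_{\mathcal R}(\mathcal P,\ilim{i}\mathbb M_i)=\Hom_R(P,(\ilim{i}\mathbb M_i)(R))=\Hom_R(P,\ilim{i}\mathbb M_i(R)),$$
where the second equality uses the stated fact that direct limits in the category of $\RR$-modules are computed pointwise. The right side, similarly, is $\ilim{i}\Hom_R(P,\mathbb M_i(R))$. Hence the desired equality reduces to the identity $\Hom_R(P,\ilim{i}\mathbb M_i(R))=\ilim{i}\Hom_R(P,\mathbb M_i(R))$, which is the classical characterization of finitely presented $R$-modules (true for any direct system of $R$-modules, so in particular for $\{\mathbb M_i(R)\}$).

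For the ``in particular'' clause, I would show that the quasi-coherent module functor commutes with direct limits at the level of the source argument: given a direct system $\{S_j\}$ of $R$-modules, one has $\mathcal{\ilim{j} S_j}(T)=(\ilim{j}S_j)\otimes_R T=\ilim{j}(S_j\otimes_R T)=(\ilim{j}\mathcal S_j)(T)$, so $\mathcal{\ilim{j}S_j}=\ilim{j}\mathcal S_j$ as $\RR$-modules. Then
$$\mathcal P^*(\ilim{j}S_j)=\Hom_{\mathcal R}(\mathcal P,\ilim{j}\mathcal S_j)=\ilim{j}\Hom_{\mathcal R}(\mathcal P,\mathcal S_j)=\ilim{j}\mathcal P^*(S_j),$$
where the middle equality is the first part applied to the direct system $\{\mathcal S_j\}$.

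I do not anticipate a genuine obstacle; the only point to be careful about is to distinguish the direct limit taken over a general direct system of $\RR$-modules from the direct limit of quasi-coherent modules associated with a direct system of $R$-modules, and to note that both reduce to pointwise direct limits of abelian groups, so the classical finitely-presented criterion applies uniformly.
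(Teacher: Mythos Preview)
Your proposal is correct and follows essentially the same approach as the paper: both reduce to the classical fact via Proposition~\ref{tercer} and the pointwise computation of direct limits. Your treatment of the ``in particular'' clause is a bit more explicit than the paper's (which leaves it implicit), but the argument is the same; note that one could also shortcut it by observing directly that $\mathcal P^*(S)=\Hom_R(P,S)$ via Proposition~\ref{tercer}, so $\mathcal P^*(\ilim{j}S_j)=\Hom_R(P,\ilim{j}S_j)=\ilim{j}\Hom_R(P,S_j)=\ilim{j}\mathcal P^*(S_j)$ without needing to rewrite $\mathcal{\ilim{j}S_j}$ as $\ilim{j}\mathcal S_j$.
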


\begin{proof} By \ref{tercer},
$\Hom_{\mathcal R}(\mathcal P,\ilim{i} \mathbb M_i)=
\Hom_{R}( P,\ilim{i} \mathbb M_i(R))=\ilim{i} \Hom_{R}( P,\mathbb M_i(R)) $ $=$ $\ilim{i}
\Hom_{\mathcal R}(\mathcal P,\mathbb M_i)$.
\end{proof}

\begin{proposition} \label{4.4}  An $R$-module $M$ is finitely presented  iff $\mathcal M^*$ preserves direct limits.

\end{proposition}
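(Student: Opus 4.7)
The plan is to translate the condition that $\mathcal M^*$ preserves direct limits into the classical condition that $\Hom_R(M,-)$ commutes with filtered colimits, and then invoke the standard characterization of finitely presented modules via this property.

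The first step is to note that by Proposition~\ref{tercer} there is a natural identification
$$\mathcal M^*(N) \;=\; \Hom_{\mathcal R}(\mathcal M,\mathcal N) \;=\; \Hom_R(M,N),$$
so the hypothesis ``$\mathcal M^*$ preserves direct limits'' is equivalent to the statement that $\Hom_R(M,\ilim{i} N_i) = \ilim{i}\Hom_R(M,N_i)$ for every direct system $\{N_i\}_{i\in I}$ of $R$-modules. The forward implication of the theorem then follows at once from Proposition~\ref{1.9} applied with $P = M$.

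For the converse, I would express $M$ as a filtered direct limit $M = \ilim{i\in I} F_i$ of finitely presented modules, using the standard construction where $I$ is the filtered category of pairs $(F,F\to M)$ with $F$ finitely presented and the structural maps are the obvious ones. Assuming that $\Hom_R(M,-)$ preserves this particular colimit, we get $\mathrm{Id}_M \in \Hom_R(M,M) = \ilim{i}\Hom_R(M,F_i)$, so there exist an index $i_0\in I$ and a morphism $s\colon M\to F_{i_0}$ with $\phi_{i_0}\circ s = \mathrm{Id}_M$, where $\phi_{i_0}\colon F_{i_0}\to M$ is the structural map. Hence $M$ is a direct summand of $F_{i_0}$; since a direct summand of a finitely presented module is finitely presented, $M$ itself is finitely presented.

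I do not anticipate a serious obstacle: the proof is essentially bookkeeping once the identification of Proposition~\ref{tercer} is in place. The only auxiliary facts invoked—every module is a filtered colimit of finitely presented modules, and direct summands of finitely presented modules remain finitely presented—are standard and require no new ideas.
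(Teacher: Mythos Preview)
Your proof is correct and follows essentially the same approach as the paper's: both directions coincide, with the converse proved by writing $M=\ilim{i}P_i$ as a direct limit of finitely presented modules, using the hypothesis to factor $\mathrm{Id}_M$ through some $P_i$, and concluding that $M$ is a direct summand of a finitely presented module. Your version is in fact slightly more explicit than the paper's, which dispatches the forward direction with ``It is well known'' where you invoke Proposition~\ref{1.9}.
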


\begin{proof} $\Leftarrow)$ Any $R$-module is a direct limit of
finitely presented modules. Write $M=\ilim{i} P_i$, where $P_i$ is a finitely presented module, for any $i$. Observe that
$$Id\in \Hom_{\mathcal R}(\mathcal M,\mathcal M)={\mathcal M^*}(M)=\ilim{i} {\mathcal M^*}(P_i)=\ilim{i} \Hom_{\mathcal R}(\mathcal M,\mathcal P_i),$$ hence $Id$ factors through a morphism $f_i\colon M\to P_i$,  for some $i$. Then, $M$ is a direct summand of $P_i$, and it is finitely presented.

$\Rightarrow)$ It is well known.

\end{proof}

%
%
%

Let $\mathbb M$ and $\mathbb M'$ be $\mathcal R$-modules. If $\mathbb M$ preserves direct limits, then $\Hom_{\mathcal R}(\mathbb M,\mathbb M')$ is a set:
Choose a set $A$ of representatives of the isomorphism classes of finitely presented $R$-modules. Any morphism $f\colon \mathbb M\to \mathbb M'$ is determined
by the set $\{f_S\}_{S\in A}$ since given an $R$-module $T$ we can write
$T=\ilim{i\in I} S_i$, where $S_i\in A$ for any $i\in I$, and the diagram 
$$\xymatrix{ \mathbb M(\ilim{i} S_i)=\mathbb M (S) \ar[r]^-{f_S} & \mathbb M' (S)=\mathbb M'(\ilim{i} S_i)\\ \ilim{i}\mathbb M( S_i) \ar@{=}[u] \ar[r]^-{[f_{S_i}]} & 
\ilim{i}\mathbb M'( S_i) \ar[u]}$$
is commutative. Therefore, $\Hom_{\mathcal R}(\mathbb M,\mathbb M')\subset \prod_{S\in A} \Hom_{gr}(\mathbb M(S),\mathbb M'(S))$.

\begin{proposition} \label{4.8} If $\mathbb M_1$ and  $\mathbb M_2$ preserve direct limits  and $f\colon \mathbb M_1\to\mathbb M_2$ is a morphism of $\mathcal R$-modules,
then $\Ker f$, $\Ima f$  and $\Coker f$ preserve direct limits.\end{proposition}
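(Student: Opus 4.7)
The plan is to evaluate each of $\Ker f$, $\Ima f$, $\Coker f$ pointwise on a direct system and then invoke the exactness of filtered colimits in the category of abelian groups.

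Fix a direct system $\{S_i\}_{i\in I}$ of $R$-modules with $S = \ilim_{i} S_i$. By the remark following Notation \ref{nota2.1}, $(\Ker f)(S) = \Ker f_S$, $(\Coker f)(S) = \Coker f_S$ and $(\Ima f)(S) = \Ima f_S$, and similarly at each $S_i$. Since $\mathbb M_1$ and $\mathbb M_2$ preserve direct limits, the morphism $f_S\colon \mathbb M_1(S) \to \mathbb M_2(S)$ is identified with the induced morphism $\ilim_{i} f_{S_i}\colon \ilim_{i}\mathbb M_1(S_i)\to \ilim_{i}\mathbb M_2(S_i)$ between the direct limits of abelian groups.

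The key input is that filtered colimits in the category of abelian groups are exact, so they commute with kernels, cokernels and images. Applying this: $\Ker(\ilim_{i} f_{S_i}) = \ilim_{i} \Ker f_{S_i}$, $\Coker(\ilim_{i} f_{S_i}) = \ilim_{i} \Coker f_{S_i}$ and $\Ima(\ilim_{i} f_{S_i}) = \ilim_{i} \Ima f_{S_i}$. Combining with the previous paragraph yields
$$(\Ker f)(S) = \ilim_{i}(\Ker f)(S_i),\quad (\Coker f)(S) = \ilim_{i}(\Coker f)(S_i),\quad (\Ima f)(S) = \ilim_{i}(\Ima f)(S_i),$$
which is exactly what we had to show.

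There is no real obstacle: the argument is essentially mechanical once one observes that everything can be computed pointwise and that direct limits of abelian groups are exact. If one prefers to avoid quoting exactness of filtered colimits directly for $\Ima f$, one can instead note that $\Ima f = \Ker(\mathbb M_2 \to \Coker f)$ and reuse the kernel case, since $\Coker f$ has already been shown to preserve direct limits and so does $\mathbb M_2$.
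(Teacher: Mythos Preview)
Your proof is correct. The paper states this proposition without proof, presumably because the argument is the obvious one you give: kernels, images, and cokernels of $\mathcal R$-modules are computed pointwise, and filtered colimits in abelian groups are exact.
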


\begin{proposition} \label{4.85} If $\{\mathbb M_i\}_{i\in I}$ is a direct system 
of $\mathcal R$-modules that preserve direct limits, then $\ilim{i}\mathbb M_i$ preserves direct limits.
\end{proposition}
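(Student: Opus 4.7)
The plan is straightforward: unpack both sides using the pointwise description of direct limits of $\RR$-modules recalled in the preliminaries, and then invoke that filtered colimits of abelian groups commute with each other. Concretely, given an upward directed system $\{S_j\}_{j\in J}$ of $R$-modules with colimit $S=\ilim{j}S_j$, I would use $(\ilim{i}\mathbb M_i)(T)=\ilim{i}\mathbb M_i(T)$ (valid for every $R$-module $T$ by the formula stated in the remark on limits of $\RR$-modules) to reduce the claim to comparing $\ilim{i}\mathbb M_i(S)$ with $\ilim{j}\ilim{i}\mathbb M_i(S_j)$.

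Once the reduction is in place, the two moves to make are: first apply the hypothesis, since each $\mathbb M_i$ preserves direct limits, to obtain $\mathbb M_i(S)=\ilim{j}\mathbb M_i(S_j)$ and hence $\ilim{i}\mathbb M_i(S)=\ilim{i}\ilim{j}\mathbb M_i(S_j)$; second, swap the two directed colimits to get $\ilim{j}\ilim{i}\mathbb M_i(S_j)$. The swap is the only nontrivial ingredient and is a classical property of filtered colimits in abelian groups, valid because both $I$ and $J$ are upward directed. Reassembling with the pointwise formula on the right produces $\ilim{j}(\ilim{i}\mathbb M_i)(S_j)$, which is exactly $(\ilim{i}\mathbb M_i)(S)$ expressed as a direct limit over $j$, as required.

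There is no real obstacle; the argument is pure bookkeeping. I would, however, make sure to check naturality in the transition maps of the system $\{S_j\}_{j\in J}$ when passing back from the evaluation-wise statement to an isomorphism of $\RR$-modules, and to flag the precise spot where upward directedness of both index sets is used, since that is indispensable for the colimit interchange. Apart from these minor points of hygiene, the proof should be little more than a one-line identification obtained by chaining the three displayed equalities above.
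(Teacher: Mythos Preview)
Your argument is correct and is exactly the standard one: evaluate pointwise, use the hypothesis on each $\mathbb M_i$, and interchange the two filtered colimits of abelian groups. The paper in fact states this proposition without proof, so your proposal simply supplies the routine verification the authors left to the reader.
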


 Choose a set $A$ of representatives of the isomorphism classes of finitely presented $R$-modules. $\{\mathcal P^*\}_{P\in A}$ is a family of generators of the category of $\mathcal R$-modules that preserve direct limits: 
Let $\mathbb M$ and $\mathbb N$ be  $\mathcal R$-modules that preserve direct limits and suppose that $\mathbb N\underset\neq\subset \mathbb M$. Then, there exist an $R$-module $S$ and $m\in \mathbb M(S)$, such that $m\notin \mathbb N(S)$. $S=\ilim{i\in I} P_i$ is a direct limit of finitely presented $R$-modules $P_i\in A$, and
$\mathbb M(S)=\ilim{i} \mathbb M(P_i)$. Hence, there exist $i\in I$ and $m_i\in 
\mathbb M(P_i)$ such that $m_i$ is mapped to $m$ by the morphism
$\mathbb M(P_i)\to \mathbb M(S)$. Obviously, $m_i\notin \mathbb N(P_i)$.
Finally, observe that $\Hom_{\mathcal R}(\mathcal P_i^*,\mathbb N)=\mathbb N(P_i)
\underset\neq\subset \mathbb M(P_i)=\Hom_{\mathcal R}(\mathcal P_i^*,\mathbb M)$.

Therefore, $\mathbb U:=\oplus_{P\in A} \mathcal P^*$ is a generator of the category
of $\mathcal R$-modules that preserve direct limits.

\begin{theorem}  \label{4.6N} Let $\mathbb M$ be an $\mathcal R$-module.
$\mathbb M$ preserves direct limits   iff there exists an exact sequence of morphisms of $\RR$-modules $$\oplus_{i\in I} \mathcal Q_i^*\to \oplus_{j\in J} \mathcal P_j^*\to\mathbb M\to 0,$$ where $P_i,Q_j$ are finitely presented $R$-modules, for each $i\in I$ and $j\in J$.
\end{theorem}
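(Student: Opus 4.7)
My plan is to handle the two implications separately, with the harder direction being a standard two-step projective-resolution argument that leans on the generator property of $\{\mathcal P^*\}_{P \in A}$ established in the paragraph immediately preceding the theorem.

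For the implication $(\Leftarrow)$, I would first note that each $\mathcal P_j^*$ preserves direct limits by Proposition \ref{1.9}, since each $P_j$ is finitely presented. Finite direct sums obviously preserve direct limits (direct limits commute with finite direct sums of abelian groups), and an arbitrary direct sum is the filtered direct limit of its finite partial sums, so Proposition \ref{4.85} upgrades this to arbitrary direct sums. Hence both $\oplus_{i\in I}\mathcal Q_i^*$ and $\oplus_{j\in J}\mathcal P_j^*$ preserve direct limits, and then $\mathbb M$, being the cokernel of a morphism between them, preserves direct limits by Proposition \ref{4.8}.

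For the implication $(\Rightarrow)$, I would apply the generator argument that appears just before the theorem statement. Explicitly, set $J := \coprod_{P \in A}\mathbb M(P)$ and, for each element $(P,m) \in J$, use the Yoneda identification $\Hom_{\mathcal R}(\mathcal P^*, \mathbb M) = \mathbb M(P)$ to obtain a morphism $\mathcal P^* \to \mathbb M$. Assembling these produces
$$\pi\colon \oplus_{j\in J}\mathcal P_j^* \to \mathbb M,$$
and the displayed argument in the preamble to the theorem (any $\mathcal R$-submodule strictly contained in $\mathbb M$ can be enlarged by some $\mathcal P^*\to \mathbb M$) shows that $\pi$ is an epimorphism.

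Next I let $\mathbb K := \Ker \pi$. Since both $\oplus_j \mathcal P_j^*$ and $\mathbb M$ preserve direct limits, Proposition \ref{4.8} yields that $\mathbb K$ preserves direct limits as well. Re-applying the same generator argument to $\mathbb K$, I obtain an epimorphism $\oplus_{i\in I}\mathcal Q_i^* \to \mathbb K$ with each $Q_i$ finitely presented; composing with the inclusion $\mathbb K \hookrightarrow \oplus_j \mathcal P_j^*$ produces the required exact sequence. The only slightly delicate point in all of this is verifying that $\mathbb K$ preserves direct limits (so that the generator property can be invoked a second time), but this is exactly the content of Proposition \ref{4.8}, so no real obstacle arises.
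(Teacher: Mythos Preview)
Your proposal is correct and follows essentially the same approach as the paper. The only cosmetic difference is bookkeeping: the paper bundles the generators into $\mathbb U=\oplus_{P\in A}\mathcal P^*$ and takes $\oplus_{\Hom_{\mathcal R}(\mathbb U,\mathbb M)}\mathbb U\to\mathbb M$, whereas you index directly by $\coprod_{P\in A}\mathbb M(P)$ and work with the individual $\mathcal P^*$'s; both versions invoke the generator property, then Proposition~\ref{4.8} for the kernel, then repeat.
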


\begin{proof} $\Leftarrow)$ $\oplus_i \mathcal P_i^*$ and $\oplus_{j} \mathcal Q_j^*$
preserve direct limits by \ref{1.9} and \ref{4.85}. $\mathbb M$ preserves direct limits by \ref{4.8}.

$\Rightarrow)$ Put $I:=\Hom_{\mathcal R}(\mathbb U,\mathbb M)$. Hence, the natural morphism $$\pi\colon \oplus_{I} \mathbb U\to \mathbb M$$ is an epimorphism. By Proposition \ref{4.8},  $\Ker \pi$ preserve direct limits. Again, there exists an epimorphism $\oplus_J \mathbb U\to \Ker \pi$, and we conclude.

\end{proof}

\begin{definition} We will denote by $\langle\text{\sl Qs-ch}\rangle$ the full subcategory of the category of $\RR$-modules whose objects are the $\RR$-modules that preserve direct limits.
\end{definition}

\begin{theorem} $\langle\text{\sl Qs-ch}\rangle$ is the smallest full subcategory of the category of $\RR$-modules stable by kernels, cokernels, direct limits
and isomorphisms (if an $\RR$-module is isomorphic to an object of the subcategory then
it belongs to the subcategory)
that contains the $\mathcal R$-module $\mathcal R$ (or quassi-coherent modules).
\end{theorem}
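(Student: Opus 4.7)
The plan has two parts: verifying that $\langle\text{\sl Qs-ch}\rangle$ itself is stable under the listed operations and contains $\mathcal R$ (and every quasi-coherent module), and then proving minimality.

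For the first part, quasi-coherent modules preserve direct limits because tensor product commutes with direct limits, so they lie in $\langle\text{\sl Qs-ch}\rangle$; in particular $\mathcal R$ does. Stability of $\langle\text{\sl Qs-ch}\rangle$ under kernels and cokernels is Proposition \ref{4.8}, stability under direct limits is Proposition \ref{4.85}, and stability under isomorphisms is immediate from the functorial nature of the defining condition.

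For minimality, let $\mathcal C$ be any full subcategory of $\RR$-modules containing the quasi-coherent modules (or at least $\mathcal R$) and closed under kernels, cokernels, direct limits and isomorphisms; the goal is $\langle\text{\sl Qs-ch}\rangle \subseteq \mathcal C$. The key tool is Theorem \ref{4.6N}: every $\mathbb M \in \langle\text{\sl Qs-ch}\rangle$ fits in an exact sequence
$$\oplus_{i} \mathcal Q_i^* \to \oplus_{j} \mathcal P_j^* \to \mathbb M \to 0,$$
with $P_j, Q_i$ finitely presented. So it suffices to show each $\mathcal P^*$ (for $P$ finitely presented) and each arbitrary direct sum of such modules lie in $\mathcal C$; then $\mathbb M$ appears as the cokernel of a morphism between objects of $\mathcal C$, hence belongs to $\mathcal C$.

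To put $\mathcal P^*$ into $\mathcal C$, pick a finite presentation $R^m \to R^n \to P \to 0$ and apply $\Hom_R(-, N)$ levelwise in $N$ to identify $\mathcal P^*$ with $\ker(\mathcal R^n \to \mathcal R^m)$, using that each $\mathcal R^k$ is quasi-coherent and that $\mathcal R^{*} \cong \mathcal R$. Since $\mathcal R^n$ and $\mathcal R^m$ lie in $\mathcal C$, kernel-closure gives $\mathcal P^* \in \mathcal C$. An arbitrary $\oplus_j \mathcal P_j^*$ is the filtered colimit of its finite sub-direct-sums, each of which lies in $\mathcal C$, and direct-limit closure then puts the whole sum in $\mathcal C$. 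The main minor obstacle is that, starting only from $\mathcal R$, one must bootstrap $\mathcal R^n \in \mathcal C$ for all $n$, which requires either interpreting closure under direct limits broadly enough to include finite coproducts or treating $\mathcal C$ as an additive subcategory implicitly; once this small bookkeeping step is granted, the chain $\mathcal R \rightsquigarrow \mathcal R^n \rightsquigarrow \mathcal P^* \rightsquigarrow \oplus_j \mathcal P_j^* \rightsquigarrow \mathbb M$ carries us from the generator to an arbitrary object of $\langle\text{\sl Qs-ch}\rangle$ while staying in $\mathcal C$.
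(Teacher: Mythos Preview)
Your proof is correct and follows essentially the same route as the paper: both use Propositions \ref{4.8} and \ref{4.85} for the stability of $\langle\text{\sl Qs-ch}\rangle$, then invoke Theorem \ref{4.6N} for minimality and realize each $\mathcal P^*$ as the kernel of a map $\mathcal R^n\to\mathcal R^m$ coming from a finite presentation of $P$. Your write-up is more explicit than the paper's terse four-line argument, and the bookkeeping issue you flag (obtaining $\mathcal R^n$ from $\mathcal R$ alone) is a genuine subtlety that the paper simply passes over.
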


\begin{proof}  Quassi-coherent modules preserve direct limits. 
By Theorem \ref{4.6N}, Proposition \ref{4.8} and Proposition \ref{4.85} we have only to prove that $\mathcal P^*\in \langle\text{\sl Qs-ch}\rangle$, for any finitely presented $R$-module $P$. Consider an exact sequence of $R$-module morphisms $R^n\to R^m\to P\to 0$. Dually, $0\to \mathcal P^*\to \mathcal R^m\to \mathcal R^n$ is exact and $\mathcal P^*\in \langle\text{\sl Qs-ch}\rangle$.

\end{proof}

\begin{corollary} Let $K$ be a field and $\mathbb M$ an $\mathcal K$-module.  $\mathbb M\in \langle\text{\sl Qs-ch}\rangle$  iff $\mathbb M$ is quasi-coherent.

\end{corollary}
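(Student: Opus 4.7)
The plan is to combine the structural presentation from Theorem \ref{4.6N} with the fact that, over a field, duals of finitely presented modules are themselves quasi-coherent. The $\Leftarrow$ direction is immediate: a quasi-coherent module preserves direct limits because tensor product commutes with direct limits (this is already noted as an example).

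For the $\Rightarrow$ direction, suppose $\mathbb{M}\in\langle\text{\sl Qs-ch}\rangle$. By Theorem \ref{4.6N}, there is an exact sequence
$$\oplus_{i\in I}\mathcal Q_i^*\to\oplus_{j\in J}\mathcal P_j^*\to\mathbb M\to 0,$$
where each $P_j$ and $Q_i$ is a finitely presented $K$-module. Because $K$ is a field, every $P_j$ and $Q_i$ is a finite-dimensional $K$-vector space. First I would check the key observation: for a finite-dimensional $P$, the module scheme $\mathcal P^*$ is quasi-coherent, associated to the linear dual $P^{\vee}:=\Hom_K(P,K)$. Indeed,
$$\mathcal P^*(S)=\Hom_{\mathcal K}(\mathcal P,\mathcal S)=\Hom_K(P,S)=P^{\vee}\otimes_K S=\widetilde{P^{\vee}}(S),$$
where the penultimate equality uses that $P$ is finite-dimensional (so $\Hom_K(P,-)$ commutes with tensor products).

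Next I would note that direct sums of quasi-coherent modules are quasi-coherent, since $\oplus_j(P_j^{\vee}\otimes_K S)=(\oplus_j P_j^{\vee})\otimes_K S$. Hence both $\oplus_j\mathcal P_j^*$ and $\oplus_i\mathcal Q_i^*$ are quasi-coherent, coming from the $K$-vector spaces $\oplus_j P_j^{\vee}$ and $\oplus_i Q_i^{\vee}$. By the equivalence between quasi-coherent $\mathcal K$-modules and $K$-modules, the map between them is induced by a $K$-linear map $f\colon \oplus_i Q_i^{\vee}\to\oplus_j P_j^{\vee}$, and the cokernel computed in the category of $\mathcal K$-modules is $\widetilde{\Coker f}$ (because tensoring with $S$ is exact on the right). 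Thus $\mathbb M\cong\widetilde{\Coker f}$ is quasi-coherent.

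The only point requiring any thought is the identification $\mathcal P^*=\widetilde{P^{\vee}}$ for finite-dimensional $P$; everything else is a routine use of the presentation theorem and the equivalence of categories between $K$-modules and quasi-coherent $\mathcal K$-modules. There is no real obstacle here: the statement is essentially saying that once duals of finitely presented modules become quasi-coherent (which happens over a field because finite-dimensional vector spaces are reflexive in the naive sense), the class generated by kernels, cokernels and direct limits collapses to quasi-coherent modules.
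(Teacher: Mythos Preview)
Your argument is correct. The key field-specific input---that $\mathcal P^*\cong\widetilde{P^\vee}$ for finite-dimensional $P$---is exactly right, and the rest follows from the presentation in Theorem~\ref{4.6N} together with closure of quasi-coherent modules under direct sums and cokernels.

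The paper leaves this corollary without proof, but its placement immediately after the ``smallest subcategory'' theorem indicates the intended argument: over a field every $K$-module is flat, so the kernel of a morphism $\mathcal M\to\mathcal N$ between quasi-coherent modules is again quasi-coherent (namely $\widetilde{\Ker f_K}$); hence the category of quasi-coherent $\mathcal K$-modules is already closed under kernels, cokernels and direct limits, and by minimality it coincides with $\langle\text{\sl Qs-ch}\rangle$. Your route via Theorem~\ref{4.6N} is a bit more hands-on but rests on the same fact in disguise: the identification $\mathcal P^*=\widetilde{P^\vee}$ is precisely the statement that the kernel in $0\to\mathcal P^*\to\mathcal R^m\to\mathcal R^n$ is quasi-coherent. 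So the two arguments are really the same observation packaged differently; yours has the advantage of being explicit and self-contained, while the paper's framing emphasizes the closure property as the conceptual reason.
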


\begin{corollary} \label{C5.3}
If $\mathbb M\in {\langle\text{\sl Qs-ch}\rangle}$ then $\mathbb M^*$ preserves direct products.
\end{corollary}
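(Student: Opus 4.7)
The approach is to combine the presentation of $\mathbb M$ supplied by Theorem \ref{4.6N} with the duality identities from Section 2. Concretely, since $\mathbb M\in\langle\text{\sl Qs-ch}\rangle$, pick an exact sequence
$$\oplus_{\alpha} \mathcal Q_\alpha^* \to \oplus_{\beta} \mathcal P_\beta^* \to \mathbb M \to 0$$
with $P_\beta, Q_\alpha$ finitely presented $R$-modules. For an arbitrary $R$-module $N$, apply $\Hom_{\mathcal R}(-,\mathcal N)$; by left-exactness of $\Hom$ together with the universal property of direct sums, one obtains an exact sequence
$$0 \to \mathbb M^*(N) \to \prod_{\beta} \Hom_{\mathcal R}(\mathcal P_\beta^*,\mathcal N) \to \prod_{\alpha} \Hom_{\mathcal R}(\mathcal Q_\alpha^*, \mathcal N).$$

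The next step is to identify $\Hom_{\mathcal R}(\mathcal P^*,\mathcal N)$ with the value $\mathcal P(N)=P\otimes_R N$ of the quasi-coherent module $\mathcal P$ at $N$; this is exactly Theorem \ref{prop4}. Substituting, we obtain a natural identification
$$\mathbb M^*(N) = \Ker\Bigl[\prod_{\beta} \mathcal P_\beta(N) \to \prod_{\alpha} \mathcal Q_\alpha(N)\Bigr],$$
which exhibits $\mathbb M^*$ as a kernel of a morphism between two products of \emph{honest} quasi-coherent functors attached to finitely presented modules.

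The conclusion is then formal. For any family $\{N_k\}$ of $R$-modules, each $\mathcal P_\beta$ (resp.\ $\mathcal Q_\alpha$) preserves direct products, because $P_\beta$ (resp.\ $Q_\alpha$) is finitely presented; this is the classical characterization recalled in the Introduction. Since arbitrary products of functors preserving products again preserve products, and since kernels commute with direct products (both being limits), it follows that
$$\mathbb M^*\Bigl(\prod_k N_k\Bigr) = \Ker\Bigl[\prod_k\!\prod_{\beta} \mathcal P_\beta(N_k) \to \prod_k\!\prod_{\alpha} \mathcal Q_\alpha(N_k)\Bigr] = \prod_k \mathbb M^*(N_k).$$
I do not foresee any serious obstacle: once Theorem \ref{4.6N} supplies the presentation, every remaining step is either a formal manipulation of limits or an application of the duality identities already established in Section 2. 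If anything, the only point where one must be mildly careful is the side (left/right) conventions when invoking Theorem \ref{prop4} to identify $\Hom_{\mathcal R}(\mathcal P^*,\mathcal N)$ with $P\otimes_R N$, but this is purely bookkeeping.
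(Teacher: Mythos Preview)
Your proof is correct and follows essentially the same route as the paper: start from the presentation of Theorem~\ref{4.6N}, dualize to exhibit $\mathbb M^*$ as the kernel of a map $\prod_\beta \mathcal P_\beta \to \prod_\alpha \mathcal Q_\alpha$, and conclude because each factor preserves direct products and kernels/products commute with products. The only cosmetic difference is that you unpack the dualization step via Theorem~\ref{prop4}, while the paper simply writes ``taking dual $\mathcal R$-modules'' and invokes $(\oplus_j \mathcal P_j^*)^*=\prod_j \mathcal P_j$.
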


\begin{proof} By \ref{4.6N}, there exists an exact sequence of morphisms of $\RR$-modules $\oplus_{i} \mathcal Q_i^*\to \oplus_{j} \mathcal P_j^*\to\mathbb M\to 0$, where $P_i,Q_j$ are finitely presented $R$-modules, for every $i,j$. Taking dual $\RR$-modules, we have
the exact sequence of morphisms
$$0\to \mathbb M^*\to \prod_{j} \mathcal P_j\to \prod_{i} \mathcal Q_i$$
$\mathbb M^*$ preserves direct products since $ \prod_{j} \mathcal P_j$ and  $\prod_{i} \mathcal Q_i$ preserve direct products.

\end{proof}

%

\begin{proposition} \cite[Th 1.]{Watts} \label{Wats} Let $\mathbb M$ be an $\RR$-module. $\mathbb M$ is a quasi-coherent $\RR$-module iff $\mathbb M\in \langle\text{\sl Qs-ch}\rangle$ and it is a right-exact functor.
\end{proposition}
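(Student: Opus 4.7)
The plan is to prove both implications by reducing to the case of finite free $R$-modules, where the natural comparison map is forced to be an isomorphism by additivity.

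The forward implication is immediate: a quasi-coherent module $\mathcal M$ satisfies $\mathcal M(S)=M\otimes_R S$, and tensor product is both right-exact and compatible with direct limits in the variable $S$ (being a left adjoint there), so $\mathcal M\in\langle\text{\sl Qs-ch}\rangle$ and is right-exact.

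For the converse, suppose $\mathbb M$ preserves direct limits and is right-exact. Set $M:=\mathbb M(R)$ and consider the natural morphism $\varphi\colon \mathbb M_{qc}\to \mathbb M$ of Proposition \ref{tercerb}. I would show $\varphi_S$ is an isomorphism for every $R$-module $S$ in three stages. First, for $S=R^n$ finite free, additivity gives $\mathbb M(R^n)=M^n=M\otimes_R R^n=\mathbb M_{qc}(R^n)$, and one checks that the identification agrees with $\varphi_{R^n}$. Second, for $S=P$ finitely presented with presentation $R^n\to R^m\to P\to 0$, applying $\mathbb M_{qc}$ (which is quasi-coherent, hence right-exact) and $\mathbb M$ (right-exact by hypothesis) produces a commutative diagram with exact rows in which the first two vertical arrows are isomorphisms by the previous step; the five lemma then yields that $\varphi_P$ is an isomorphism. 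Third, for arbitrary $S=\ilim{i}P_i$ expressed as a direct limit of finitely presented modules, both $\mathbb M_{qc}$ (always) and $\mathbb M$ (by hypothesis) preserve this limit, so $\varphi_S$ is an isomorphism by passing to the limit.

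The main point that needs care is the naturality required in the second step: one must know that $\varphi$ is an honest natural transformation so that it respects the morphisms induced by the presentation $R^n\to R^m\to P\to 0$. This is built into the construction of $\varphi$ in Proposition \ref{tercerb}. Once that is in place, the argument is essentially formal, with the only substantive ingredients being right-exactness (for the five-lemma step) and preservation of direct limits (for the passage from finitely presented modules to arbitrary modules)---which is exactly why both hypotheses are needed.
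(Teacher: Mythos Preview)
Your proof is correct and follows essentially the same approach as the paper: both use the natural morphism $\mathbb M_{qc}\to\mathbb M$ of Proposition~\ref{tercerb}, verify it is an isomorphism on free modules, and then extend via right-exactness. The only cosmetic difference is the ordering: the paper first handles \emph{all} free modules $\oplus_I R$ at once (using that direct-limit preservation gives preservation of arbitrary direct sums) and then applies right-exactness to an arbitrary free presentation, whereas you go finite free $\to$ finitely presented $\to$ arbitrary; both routes use exactly the same two hypotheses in the same essential way.
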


\begin{proof} $\Leftarrow)$ Consider the natural morphism $\mathbb M_{qc}\to \mathbb M$. Given $\oplus_I R$, observe that
$${\mathbb M}_{qc}(\oplus_I R)=\oplus_I {\mathbb M}_{qc}(R)=
\oplus_I {\mathbb M}(R)={\mathbb M}(\oplus_IR).$$
Let $N$ be an $R$-module and let $L_1\to L_2\to N\to 0$ be an exact sequence
of morphisms of $R$-modules, where $L_1$ and $L_2$ are free $R$-modules.
Then, ${\mathbb M}_{qc}(N)={\mathbb M}(N)$ since $\mathbb M$ and ${\mathbb M}_{qc}$ are right exact. Therefore, the natural morphism $\mathbb M_{qc}\to \mathbb M$ is an isomorphism.

\end{proof}

$\oplus_{i\in I} \mathcal P_i^*$ is a projective $\RR$-module, since
$$\Hom_{\RR}(\oplus_{i\in I} \mathcal P_i^*,\mathbb M)=\prod_{i\in I} \mathbb M(P_i),\text{ for any  $\RR$-module }\mathbb M.$$ Then, $\langle\text{\sl Qs-ch}\rangle$ has enough projective $\RR$-modules. $\mathbb U$ is a generator of this category, hence, it has enough injective objects, by  \cite[Theorem  1.10.1]{groty}.

 Let $\mathbb M$ be an $\mathcal R$-module. Consider the obvious morphism
$\pi\colon \oplus_{\Hom_{\mathcal R}(\mathbb U,\mathbb M)}\mathbb U \to \mathbb M$. Observe that $\pi_P$ is surjective for any finitely presented $R$-module $P$. Likewise, consider the obvious morphism  $\pi'\colon \oplus_{\Hom_{\mathcal R}(\mathbb U,\Ker\pi)}\mathbb U \to \Ker \pi$. 
Again, $\pi'_P$ is surjective for any finitely presented $R$-module $P$.
Let $\phi$ be the composite morphism
$$\oplus_{\Hom_{\mathcal R}(\mathbb U,\Ker\pi)}\mathbb U \overset{\pi'}\longrightarrow \Ker \pi\subset \oplus_{\Hom_{\mathcal R}(\mathbb U,\mathbb M)}\mathbb U $$
Put $\mathbb M_{\langle\text{\sl Qs-ch}\rangle} := \Coker \phi$. Observe that there is a natural morphism
 $\mathbb M_{\langle\text{\sl Qs-ch}\rangle} \overset{i_{\mathbb M}}\to \mathbb M$ and that $\mathbb M_{\langle\text{\sl Qs-ch}\rangle}\in \langle\text{\sl Qs-ch}\rangle. $ Besides, $$\mathbb M_{\langle\text{\sl Qs-ch}\rangle}(P)=\mathbb M(P),$$ 
for any finitely presented $R$-module $P$. Hence, if $\mathbb M\in {\langle\text{\sl Qs-ch}\rangle}$, then $\mathbb M_{\langle\text{\sl Qs-ch}\rangle}=\mathbb M$.

Observe that the assignation $\mathbb M \rightsquigarrow \mathbb M_{\langle\text{\sl Qs-ch}\rangle}$ is functorial, that is, given a morphism a morphism $f\colon \mathbb N\to \mathbb M$, we can define a natural morphism $f_{\langle\text{\sl Qs-ch}\rangle} \colon \mathbb N_{\langle\text{\sl Qs-ch}\rangle}\to
\mathbb M_{\langle\text{\sl Qs-ch}\rangle}$. Besides, the diagram
$$\xymatrix{\mathbb N \ar[rr]^-{f}  & &\mathbb M\\ \mathbb N_{\langle\text{\sl Qs-ch}\rangle} \ar[rr]^-{f_{\langle\text{\sl Qs-ch}\rangle}} \ar[u]^-{i_{\mathbb N}} & & \mathbb M_{\langle\text{\sl Qs-ch}\rangle} \ar[u]_-{i_{\mathbb M}}  }$$
is commutative. Obviously, we have the following proposition.

\begin{proposition} \label{AdjQs}
The functorial morphism
$$ \Hom_{\mathcal R}(\mathbb N,\mathbb M_{\langle\text{\sl Qs-ch}\rangle})\to \Hom_{\mathcal R}(\mathbb N,\mathbb M),\,\, f \mapsto i_{\mathbb M}\circ f,$$
is an isomorphism, for any $\mathcal R$-module $\mathbb N\in \langle\text{\sl Qs-ch}\rangle$. 
\end{proposition}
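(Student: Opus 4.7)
The plan is to reduce to a Yoneda calculation via a presentation of $\mathbb N$. Specifically, I will first check the statement when $\mathbb N$ is a direct sum of duals of finitely presented modules, and then bootstrap to a general $\mathbb N \in \langle\text{\sl Qs-ch}\rangle$ using Theorem \ref{4.6N}.

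For the base case, let $P$ be a finitely presented $R$-module. By the Yoneda lemma (as used already in the proof of Proposition \ref{reFPro2}),
\[ \Hom_{\mathcal R}(\mathcal P^*, \mathbb X) = \mathbb X(P), \]
naturally in the $\RR$-module $\mathbb X$, and under this identification the map $\Hom_{\mathcal R}(\mathcal P^*, \mathbb M_{\langle\text{\sl Qs-ch}\rangle}) \to \Hom_{\mathcal R}(\mathcal P^*, \mathbb M)$ induced by postcomposition with $i_{\mathbb M}$ becomes $i_{\mathbb M, P}$. The construction of $\mathbb M_{\langle\text{\sl Qs-ch}\rangle}$ preceding the statement shows that $\mathbb M_{\langle\text{\sl Qs-ch}\rangle}(P) = \mathbb M(P)$ via $i_{\mathbb M, P}$, so the proposition holds for $\mathbb N = \mathcal P^*$. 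Taking products over an index set $j$ and using $\Hom_{\mathcal R}(\oplus_j \mathcal P_j^*, \mathbb X) = \prod_j \mathbb X(P_j)$ extends this to $\mathbb N = \oplus_j \mathcal P_j^*$.

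For the general case, I invoke Theorem \ref{4.6N} to obtain an exact presentation
\[ \oplus_i \mathcal Q_i^* \to \oplus_j \mathcal P_j^* \to \mathbb N \to 0 \]
with $P_j, Q_i$ finitely presented. Applying the contravariant left-exact functors $\Hom_{\mathcal R}(-, \mathbb M)$ and $\Hom_{\mathcal R}(-, \mathbb M_{\langle\text{\sl Qs-ch}\rangle})$, and using the naturality of $i_{\mathbb M}$ to assemble the resulting sequences into a commutative diagram, the second and third vertical arrows are isomorphisms by the previous step. A Five Lemma (or direct kernel) argument then gives that the leftmost vertical arrow
\[ \Hom_{\mathcal R}(\mathbb N, \mathbb M_{\langle\text{\sl Qs-ch}\rangle}) \to \Hom_{\mathcal R}(\mathbb N, \mathbb M) \]
is an isomorphism as well, which is the desired conclusion.

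The argument is essentially formal and I do not expect any genuinely hard step; the only care required is to check that the Yoneda-type identifications are natural in $\mathbb X$, so that they commute with postcomposition by $i_{\mathbb M}$ and thus identify the two rows of the diagram as claimed.
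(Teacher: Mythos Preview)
Your argument is correct. The paper does not actually give a proof of this proposition: it simply precedes the statement with ``Obviously, we have the following proposition,'' relying on the fact established just before, namely that $\mathbb M_{\langle\text{\sl Qs-ch}\rangle}(P)=\mathbb M(P)$ for every finitely presented $P$. Your proof is precisely the natural way to turn that remark into a rigorous argument: the Yoneda identification $\Hom_{\mathcal R}(\mathcal P^*,\mathbb X)=\mathbb X(P)$ handles the generators, and the presentation of $\mathbb N$ from Theorem~\ref{4.6N} together with left-exactness of $\Hom_{\mathcal R}(-,\mathbb X)$ reduces the general case to a kernel comparison. There is nothing to add; this is exactly the argument the paper is suppressing.
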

 
 If 
$0\to \mathbb M'\to \mathbb M\to \mathbb M''\to 0$
is an exact sequence of morphisms of $\mathcal R$-modules, then 
$$0\to \mathbb M'_{\langle\text{\sl Qs-ch}\rangle}\to \mathbb M_{\langle\text{\sl Qs-ch}\rangle}\to \mathbb M''_{\langle\text{\sl Qs-ch}\rangle}\to 0$$
is an exact sequence of morphisms of $\mathcal R$-modules.

Every $R$-module $M$ is functorially a direct limit of finitely presented $R$-modules:
Put $I=M$ and let $\pi\colon \oplus_I R\to M$ be the obvious epimorphism.
For each finite subset $J\subseteq I$ let $\pi_J$  be the obvious composition  $\oplus_J R\hookrightarrow \oplus_I R\overset\pi\to M$. Let $K_M$ be the set of pairs  $(J,N)$, where $J$ is  a finite subset of $I$ and $N$ is a finite submodule of $\Ker \pi_J$. $K_M$ is a directed set: $(J,N)\leq (J',N')$ if $J\subseteq J'$ and  $N\subseteq N'$. Given $(J,N)$, $(J',N')$, let $J'':=J\cup J'$ and $N'':=N+N'$, then$(J,N)$,$(J',N')\leq (J'',N'')$. It is easy to check that $M=\ilim{(J,N)\in K_M} (\oplus_JA)/N$. Let us denote $ (\oplus_JA)/N=P_{(J,N)}$.

Let $\bf Funct$ be the category of covariant and additive functors from the category of finitely presented $R$-modules to the category of abelian groups.
Given $\mathbb G\in \bf Funct$, 
let $i_*\mathbb G$ be the $\RR$-module defined by  $i_*\mathbb G(M)=\ilim{k\in K_M} \mathbb G(P_k)$. Given an $\RR$-module $\mathbb M$ let $i^*\mathbb M\in \bf Funct$ be defined by 
$i^*\mathbb M(P):=\mathbb M(P)$. Reader can check that $i^*$ and $i_*$ stablish a categorical equivalence between  $\bf Funct$ and $\langle\text{\sl Qs-ch}\rangle$.


\section{FP modules}

\begin{definition} An $\mathcal R$-module $\mathbb M\in \langle\text{\sl Qs-ch}\rangle$ is said to be an F module if  the natural morphism $\mathbb M(\prod_{i\in I} S_i)\to \prod_{i\in I} \mathbb M(S_i)$ is an epimorphism, for any set $\{S_i\}_{i\in I}$ of $R$-modules.

\end{definition}

\begin{proposition} \label{P4.2} $\mathbb M\in \langle\text{\sl Qs-ch}\rangle$ is an F module iff there exists an epimorphism
$\mathcal P^*\to \mathbb M$, where $P$ is a finitely presented $R$-module.

\end{proposition}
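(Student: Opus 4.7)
The plan is to handle the two directions separately; ($\Leftarrow$) is a diagram chase, while ($\Rightarrow$) reduces, by a diagonal argument, an arbitrary presentation coming from Theorem \ref{4.6N} to one with a single $\mathcal P^*$.

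For ($\Leftarrow$): since $\mathcal P^*(S)=\Hom_{\mathcal R}(\mathcal P,\mathcal S)=\Hom_R(P,S)$ by Proposition \ref{tercer}, the functor $\mathcal P^*$ preserves direct products as isomorphisms (Hom out of $P$ commutes with products). For any family $\{S_i\}_{i\in I}$ and an epimorphism $\mathcal P^*\twoheadrightarrow \mathbb M$, the naturality square
$$\xymatrix{\mathcal P^*(\prod_i S_i) \ar[r]^-{\sim} \ar@{->>}[d] & \prod_i \mathcal P^*(S_i) \ar@{->>}[d] \\ \mathbb M(\prod_i S_i) \ar[r] & \prod_i \mathbb M(S_i)}$$
has an isomorphism on top and surjective verticals (the right one being a product of surjections), so the bottom arrow is surjective, proving $\mathbb M$ is an F module.

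For ($\Rightarrow$): Theorem \ref{4.6N} supplies an epimorphism $\pi\colon \oplus_{j\in J}\mathcal P_j^*\twoheadrightarrow \mathbb M$ with each $P_j$ finitely presented. For each finite $\alpha\subseteq J$ set $P_\alpha:=\oplus_{j\in\alpha}P_j$ (finitely presented), $\mathcal P_\alpha^*=\oplus_{j\in\alpha}\mathcal P_j^*$, and $\mathbb M_\alpha:=\Ima(\mathcal P_\alpha^*\to \mathbb M)$. Since $\oplus_{j\in J}\mathcal P_j^*=\ilim{\alpha} \mathcal P_\alpha^*$ and images commute with filtered direct limits, $\mathbb M=\ilim{\alpha}\mathbb M_\alpha$, so $\mathbb M(S)=\bigcup_\alpha \mathbb M_\alpha(S)$ at every $R$-module $S$. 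It suffices to prove $\mathbb M_\alpha=\mathbb M$ for some finite $\alpha$, as then $\mathcal P_\alpha^*\twoheadrightarrow \mathbb M$ is the required epimorphism.

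Assume for contradiction that $\mathbb M_\alpha\neq \mathbb M$ for every finite $\alpha$, and for each $\alpha$ pick a witness $(S_\alpha,m_\alpha)$ with $m_\alpha\in \mathbb M(S_\alpha)\setminus \mathbb M_\alpha(S_\alpha)$. The F-module hypothesis, applied to the set-indexed family $\{S_\alpha\}$, lifts the diagonal $(m_\alpha)\in \prod_\alpha \mathbb M(S_\alpha)$ to some $m\in \mathbb M(\prod_\alpha S_\alpha)$. By $\mathbb M(\prod_\alpha S_\alpha)=\bigcup_\beta \mathbb M_\beta(\prod_\alpha S_\alpha)$, some index $\beta$ satisfies $m\in \mathbb M_\beta(\prod_\alpha S_\alpha)$. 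Functoriality of $\mathbb M_\beta$ along the projection $\prod_\alpha S_\alpha \to S_\beta$ sends $m$ into $\mathbb M_\beta(S_\beta)$ (since $\mathbb M_\beta$ is a subfunctor) and to $m_\beta$ (since $m$ lifts the diagonal), so $m_\beta\in \mathbb M_\beta(S_\beta)$, contradicting the choice of $m_\beta$. The main obstacle is precisely this diagonal trick: the F-module condition is exactly what one needs to assemble all the individual escapees into one lifted element, whereupon the filtered-union structure of $\mathbb M$ forces that element into some $\mathbb M_\beta$ and triggers the contradiction against the witness indexed by $\beta$ itself.
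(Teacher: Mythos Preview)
Your proof is correct, but the $(\Rightarrow)$ direction takes a genuinely different route from the paper's. The paper argues constructively in two steps: starting from the same epimorphism $\pi\colon \oplus_j \mathcal Q_j^*\twoheadrightarrow \mathbb M$, it sets $W:=\prod_j Q_j$ and observes that the F-module hypothesis says exactly that
\[
\Hom_{\mathcal R}(\mathcal W^*,\mathbb M)=\mathbb M\!\left(\textstyle\prod_j Q_j\right)\longrightarrow \prod_j \mathbb M(Q_j)=\Hom_{\mathcal R}\!\left(\oplus_j \mathcal Q_j^*,\mathbb M\right)
\]
is surjective, so $\pi$ lifts to an epimorphism $\pi'\colon \mathcal W^*\twoheadrightarrow \mathbb M$; then, writing $W=\ilim{i} P_i$ with each $P_i$ finitely presented and using that $\mathbb M$ preserves direct limits, one gets $\Hom_{\mathcal R}(\mathcal W^*,\mathbb M)=\ilim{i}\Hom_{\mathcal R}(\mathcal P_i^*,\mathbb M)$, whence $\pi'$ factors through some $\mathcal P_i^*\twoheadrightarrow \mathbb M$. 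Your argument replaces this explicit factoring by a diagonal contradiction over the filtered system $\{\mathbb M_\alpha\}$. The paper's approach is shorter and more structural---it isolates the intermediate fact that $\mathbb M$ is already a quotient of a single module scheme $\mathcal W^*$---while yours is more self-contained and avoids introducing the auxiliary module $W$, at the price of a nonconstructive contradiction and a product indexed by all finite subsets of $J$.
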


\begin{proof} $\Rightarrow)$ There exists an epimorphism $\pi\colon \oplus_j \mathcal Q_j^*\to \mathbb M$, by Theorem \ref{4.6N}. Put $W:=\prod_j Q_j$. Consider the projection $W\to Q_j$, then we have the natural morphism $\mathcal Q_j^*\to \mathcal W^*$ and the morphism $\oplus_j \mathcal Q_j^*\to \mathcal W^*$.
The composite map
$$\Hom_{\RR}(\mathcal W^*,\mathbb M)\!=\!{\mathbb M}(\prod_j Q_j)\!\to \!\prod_j {\mathbb M}(Q_j)\!=\!\prod_j \Hom_{\RR}(\mathcal Q_j^*,\mathbb M)\!=
\Hom_{\RR}(\oplus_j \mathcal Q_j^*,\mathbb M)\!,$$
is surjective. Then, $\pi$ factors through an epimorphism $\pi'\colon\mathcal W^*\to \mathbb M$. Let $\{P_i\}$ be a direct system of finitely presented $R$-modules such that $W=\ilim{i} P_i$. Then,
$$\Hom_{\RR}(\mathcal W^*,\mathbb M)={\mathbb M}(W)=\ilim{i}
{\mathbb M}(P_i)=\ilim{i}\Hom_{\RR}(\mathcal P_i^*,\mathbb M),$$
and $\pi'$ factors through an epimorphism $f\colon \mathcal P_i^*\to \mathbb M$.  

\end{proof}

\begin{example} $M$ is a finite module iff $\mathcal M$ is an F module.\end{example}

\begin{example} Module schemes preserve direct products:
$${\mathcal N^*}(\prod_{i\in I} S_i)=\Hom_{R}(N,\prod_{i\in I} S_i)=\prod_{i\in I} \Hom_{R}(N,S_i)=\prod_{i\in I} {\mathcal N^*}(S_i).$$
\end{example}

\begin{proposition} \label{4.4b} A quasi-coherent module $\mathcal M$ preserves direct products iff $M$ is a finitely presented (right) $R$-module.\end{proposition}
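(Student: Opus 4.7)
The plan is to prove the two directions separately, with $(\Leftarrow)$ being routine and $(\Rightarrow)$ a Lenzing-type argument lifting identity elements through the comparison map $\alpha_M\colon M\otimes_R R^I \to M^I$.

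For $(\Leftarrow)$, take a finite presentation $R^m \to R^n \to M \to 0$. Tensoring with $\prod_i S_i$ exhibits $M\otimes_R \prod_i S_i$ as the cokernel of the induced map $(\prod_i S_i)^m \to (\prod_i S_i)^n$; on the other hand, the product of the right-exact sequences $S_i^m \to S_i^n \to M\otimes_R S_i \to 0$ is still right-exact in the category of abelian groups, so $\prod_i \mathcal{M}(S_i)$ is the cokernel of the same map (using the canonical identification $\prod_i S_i^k = (\prod_i S_i)^k$). The natural comparison morphism is therefore an isomorphism.

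For $(\Rightarrow)$, specialize the hypothesis to $S_i = R$ to obtain an isomorphism $\alpha_M\colon M\otimes_R R^I \xrightarrow{\sim} M^I$ for every set $I$. Taking $I = M$ and lifting the element $(m)_{m\in M}\in M^M$ to a finite sum $\sum_{j=1}^n m_j \otimes (a_{j,m})_{m\in M}$ yields $m = \sum_{j} m_j a_{j,m}$ for every $m\in M$, exhibiting $\{m_1,\dots,m_n\}$ as a finite generating set of $M$. Alternatively, this finite generation can be obtained from Proposition \ref{P4.2} together with Theorem \ref{prop4} and Note \ref{2.12N}: since preserving direct products implies being an F module, one gets an epimorphism $\mathcal{P}^*\to\mathcal{M}$ which, after factoring through a quasi-coherent module $\mathcal{L}$ with $L$ free of finite rank, forces $L\to M$ to be surjective.

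Once $M$ is finitely generated, choose a presentation $0\to K\to R^n\to M\to 0$ and rerun the argument on $K$. Consider the diagram whose top row $K\otimes_R R^I \to R^n\otimes_R R^I \to M\otimes_R R^I \to 0$ is exact by right-exactness of $\otimes_R$, and whose bottom row $0\to K^I \to (R^n)^I \to M^I \to 0$ is exact because products are exact in abelian groups; the vertical maps $\alpha_{R^n}$ and $\alpha_M$ are isomorphisms, so a routine chase shows that the induced $\alpha_K\colon K\otimes_R R^I \to K^I$ is surjective. Taking $I = K$ and lifting $\mathrm{id}_K\in K^K$ through $\alpha_K$ produces a finite generating set for $K$, whence $M$ is finitely presented. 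The main subtlety is that $\alpha_K$ cannot be expected to be injective, since $K\otimes_R R^I \to R^n\otimes_R R^I$ may fail to be injective (tensor is not left exact); however, surjectivity alone suffices for the identity-lifting trick, so the chase must be arranged to extract only that surjectivity.
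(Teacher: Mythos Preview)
Your proof is correct. The paper does not actually supply a proof of Proposition~\ref{4.4b}; it records the statement as a known fact and moves on, so there is no argument to compare against in detail. What you have written is the standard Lenzing-type proof (surjectivity of $\alpha_M$ on $M^M$ gives finite generation; a diagram chase against the short exact sequence $0\to K\to R^n\to M\to 0$ transfers surjectivity to $\alpha_K$ and hence gives finite generation of $K$), and every step checks out, including your careful remark that only surjectivity of $\alpha_K$ is needed and available. Your alternative route to finite generation via Proposition~\ref{P4.2} and Note~\ref{2.12N} is also valid: the epimorphism $\mathcal P^*\to\mathcal M$ corresponds to an element of $\mathcal M(P)=M\otimes_R P$, hence factors through some $\mathcal L$ with $L$ free of finite rank, and an epimorphism $\mathcal L\to\mathcal M$ evaluated at $R$ gives a surjection $L\to M$.
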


\begin{proposition} \label{5.3} If  $\mathbb M_1$ and $\mathbb M_2$ preserve direct products and $f\colon \mathbb M_1\to \mathbb M_2$ is a morphism of $\mathcal R$-modules, then $\Ker f $, $\Ima f$ and $\Coker f$ preserve direct products.
\end{proposition}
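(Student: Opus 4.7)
The plan is to reduce everything to the fact that the direct product functor on abelian groups is exact, so that it commutes with kernels, images and cokernels. Fix a family $\{S_i\}_{i\in I}$ of $R$-modules. Because each $\mathbb{M}_k$ ($k=1,2$) preserves direct products, the projections $\prod_i S_i \to S_j$ induce functorial isomorphisms $\mathbb{M}_k(\prod_i S_i)\xrightarrow{\sim}\prod_i \mathbb{M}_k(S_i)$. Naturality of these isomorphisms in the argument gives a commutative square identifying $f_{\prod_i S_i}\colon \mathbb{M}_1(\prod_i S_i)\to \mathbb{M}_2(\prod_i S_i)$ with the product morphism $\prod_i f_{S_i}\colon \prod_i \mathbb{M}_1(S_i)\to \prod_i \mathbb{M}_2(S_i)$.

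Recall from the preliminaries that $(\Ker f)(S)=\Ker f_S$, $(\Ima f)(S)=\Ima f_S$ and $(\Coker f)(S)=\Coker f_S$. Applied to $S=\prod_i S_i$ and combined with the identification above, the task becomes to check that
\[
\Ker(\textstyle\prod_i f_{S_i})=\prod_i \Ker f_{S_i},\quad \Ima(\prod_i f_{S_i})=\prod_i \Ima f_{S_i},\quad \Coker(\prod_i f_{S_i})=\prod_i \Coker f_{S_i},
\]
as subgroups/quotients of the relevant products. The first two equalities are immediate from the componentwise description of a product morphism: an element $(x_i)$ lies in the kernel (respectively image) of $\prod_i f_{S_i}$ iff each $x_i$ does. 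The cokernel equality follows from the right-exactness of $\prod_i$ in the category of abelian groups: applying $\prod_i$ to the exact sequences $\mathbb{M}_1(S_i)\xrightarrow{f_{S_i}}\mathbb{M}_2(S_i)\to \Coker f_{S_i}\to 0$ yields exactness of $\prod_i \mathbb{M}_1(S_i)\to \prod_i \mathbb{M}_2(S_i)\to \prod_i \Coker f_{S_i}\to 0$, whence the identification.

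There is really no obstacle: the whole argument is the naturality of the product-preservation isomorphisms together with the exactness of direct products on abelian groups. For completeness one should note that $\Ker f,\Ima f,\Coker f\in\langle\text{\sl Qs-ch}\rangle$, so that the statement ``preserves direct products'' is well-posed for them; but this is exactly Proposition~\ref{4.8}.
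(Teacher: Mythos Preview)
Your argument is correct and is exactly the straightforward verification the paper has in mind; the paper itself states Proposition~\ref{5.3} without proof, treating it as obvious.

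One small remark: your final paragraph is unnecessary and slightly off. Proposition~\ref{5.3} as stated does not assume $\mathbb M_1,\mathbb M_2\in\langle\text{\sl Qs-ch}\rangle$; the phrase ``preserves direct products'' makes sense for any $\mathcal R$-module, so there is nothing to check about well-posedness. Moreover, Proposition~\ref{4.8} would not apply here anyway, since it requires $\mathbb M_1$ and $\mathbb M_2$ to preserve direct \emph{limits}, which is not part of the hypothesis of~\ref{5.3}. The distinction matters: Proposition~\ref{5.3} is the general pointwise fact, while Proposition~\ref{C5.14} is the version inside $\langle\text{\sl Qs-ch}\rangle$ (combining~\ref{5.3} with~\ref{4.8}). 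Simply drop the last sentence and your proof is complete.
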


\begin{definition} \label{sFP} An $\mathcal R$-module $\mathbb F\in {\langle\text{\sl Qs-ch}\rangle}$ will be said to be an FP module if it preserves direct products.\end{definition}

\begin{example} Let $P$ be a finitely presented $R$-module.
Then, $\mathcal P$ and $\mathcal P^*$ are FP modules.
\end{example}

\begin{proposition} \label{C5.14}
Suppose that $\mathbb F_1$ and $\mathbb F_2$ are FP modules.  If $f\colon \mathbb F_1\to\mathbb F_2$ is a morphism of $\mathcal R$-modules, then  $\Ker f$, $\Ima f$ and $\Coker f$ are FP modules.
\end{proposition}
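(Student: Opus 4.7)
The statement combines two already-established preservation properties, so the plan is short: unpack the definition of FP module (Definition~\ref{sFP}) and invoke the two relevant propositions on each of $\Ker f$, $\Ima f$ and $\Coker f$ separately.

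First I would recall that by definition $\mathbb F_1$ and $\mathbb F_2$ belong to $\langle\text{\sl Qs-ch}\rangle$, i.e.\ they preserve direct limits. By Proposition~\ref{4.8}, kernels, images, and cokernels of morphisms between direct-limit-preserving $\mathcal R$-modules again preserve direct limits, so $\Ker f, \Ima f, \Coker f\in \langle\text{\sl Qs-ch}\rangle$. This takes care of the first half of the FP condition.

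Next, since $\mathbb F_1$ and $\mathbb F_2$ are FP modules, they preserve direct products. Proposition~\ref{5.3} applies in exactly this setting and yields that $\Ker f$, $\Ima f$, and $\Coker f$ preserve direct products as well.

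Combining both conclusions gives that each of $\Ker f$, $\Ima f$, $\Coker f$ is an $\mathcal R$-module in $\langle\text{\sl Qs-ch}\rangle$ preserving direct products, hence an FP module by Definition~\ref{sFP}. There is no real obstacle here; the corollary is essentially a formal merging of Propositions~\ref{4.8} and~\ref{5.3}, and the only thing to be careful about is that $\Ima f$ is taken in the category of $\mathcal R$-modules (as emphasized in Section~2), so that both Propositions~\ref{4.8} and~\ref{5.3} apply to the same object $\Ima f$.
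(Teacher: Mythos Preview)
Your argument is correct and is exactly the intended one: the paper states Proposition~\ref{C5.14} without proof because it is the immediate combination of Proposition~\ref{4.8} (for the $\langle\text{\sl Qs-ch}\rangle$ condition) and Proposition~\ref{5.3} (for preservation of direct products), precisely as you spell out.
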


\begin{proposition} \label{4.11} Let $\mathbb F$ be an $\mathcal R$-module. $\mathbb F$ is an FP module  iff there exists an exact sequence of $\RR$-modules $\mathcal P^*\to \mathcal Q^*\to \mathbb F\to 0$, where $P$ and $Q$ are finitely presented $R$-modules.\end{proposition}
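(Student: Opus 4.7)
The plan is to prove both directions by assembling the results just established. The forward direction will build the presentation inductively using Proposition \ref{P4.2}, while the reverse direction will follow immediately from closure of FP modules under cokernels.

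For the easy direction $(\Leftarrow)$, suppose there is an exact sequence $\mathcal P^*\to \mathcal Q^*\to \mathbb F\to 0$ with $P,Q$ finitely presented. The example preceding Proposition \ref{C5.14} records that $\mathcal P^*$ and $\mathcal Q^*$ are FP modules. Then $\mathbb F$ is the cokernel of a morphism between FP modules, and Proposition \ref{C5.14} gives that $\mathbb F$ is FP.

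For the forward direction $(\Rightarrow)$, assume $\mathbb F$ is FP. Since preserving direct products in particular makes the natural map $\mathbb F(\prod_i S_i)\to \prod_i \mathbb F(S_i)$ surjective, $\mathbb F$ is in particular an F module, so Proposition \ref{P4.2} provides a finitely presented $R$-module $Q$ together with an epimorphism $\pi\colon \mathcal Q^*\to \mathbb F$. Let $\mathbb K:=\Ker\pi$. Since $\mathcal Q^*$ is FP and $\mathbb F$ is FP, Proposition \ref{C5.14} implies $\mathbb K$ is FP as well. Applying Proposition \ref{P4.2} once more to $\mathbb K$ yields an epimorphism $\mathcal P^*\twoheadrightarrow \mathbb K$ with $P$ finitely presented. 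Composing with the inclusion $\mathbb K\hookrightarrow \mathcal Q^*$ produces a morphism $\mathcal P^*\to \mathcal Q^*$ whose image is exactly $\Ker\pi$, so the resulting sequence $\mathcal P^*\to \mathcal Q^*\to \mathbb F\to 0$ is exact, as required.

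There is no real obstacle here: the entire argument is a two-step projective resolution using module schemes $\mathcal P^*$ as the analogue of free modules, and the only non-formal input is the already-proved fact (\ref{C5.14}) that the kernel $\mathbb K$ of an epimorphism between FP modules is again FP, which is what lets us iterate Proposition \ref{P4.2}.
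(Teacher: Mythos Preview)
Your proof is correct and follows essentially the same approach as the paper: both directions use Proposition \ref{C5.14}, and for the forward direction you apply Proposition \ref{P4.2} to obtain $\mathcal Q^*\twoheadrightarrow \mathbb F$, observe that the kernel is again FP by \ref{C5.14}, and apply \ref{P4.2} once more. Your write-up merely makes explicit the (trivial) point that an FP module is in particular an F module, which the paper leaves implicit.
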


\begin{proof} $\Leftarrow)$ It is an immediate consequence of Proposition \ref{C5.14}.

$\Rightarrow)$ There exists an epimorphism $\pi\colon \mathcal Q*\to \mathbb F$,
 where $Q$ is a finitely presented $R$-module, by Proposition \ref{P4.2}.  $\Ker \pi$ is an FP module by Proposition \ref{C5.14}.
Again, there exists an epimorphism $\mathcal P^*\to \Ker \pi$, for some finitely presented $R$-module $P$. We are done.

\end{proof}

\begin{proposition} Let $\mathbb F$ be an $\mathcal R$-module. $\mathbb F$ is a projective and FP module iff $\mathbb F\simeq \mathcal P^*$ for a finitely presented $R$-module $P$.

\end{proposition}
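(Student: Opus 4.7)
For $(\Leftarrow)$, note that $\mathcal P^*$ is already observed in the paper to be an FP module when $P$ is finitely presented. Projectivity is immediate from the Yoneda identity $\Hom_{\RR}(\mathcal P^*,\mathbb M)=\mathbb M(P)$: given an epimorphism $\mathbb M\twoheadrightarrow \mathbb M''$ of $\RR$-modules, the induced map on $\Hom$-groups is just $\mathbb M(P)\to\mathbb M''(P)$, which is surjective because exactness of $\RR$-module morphisms is tested pointwise. Hence $\mathcal P^*$ is projective and FP.

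For $(\Rightarrow)$, apply Proposition \ref{4.11} to obtain an exact sequence $\mathcal P_0^*\to\mathcal Q^*\to\mathbb F\to 0$ with $P_0,Q$ finitely presented $R$-modules. Since $\mathbb F$ is projective, the epimorphism $\mathcal Q^*\twoheadrightarrow\mathbb F$ splits, giving a decomposition $\mathcal Q^*\cong\mathbb F\oplus\mathbb F'$ in the category of $\RR$-modules. Dualizing and using reflexivity of $\mathcal Q$ (Theorem \ref{reflex}), together with $(A\oplus B)^*=A^*\oplus B^*$, we get $\mathcal Q\cong\mathbb F^*\oplus \mathbb F'^*$. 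So $\mathbb F^*$ is a direct summand of the quasi-coherent $\RR$-module $\mathcal Q$.

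The key step is now to upgrade ``direct summand of a quasi-coherent module'' to ``quasi-coherent''. I will do this via Watts' theorem (Proposition \ref{Wats}): I must check that $\mathbb F^*$ belongs to $\langle\text{\sl Qs-ch}\rangle$ and is right-exact. Both properties pass to direct summands, because direct sum is an exact functor on $\RR$-modules that commutes with direct limits, so any testing sequence for the property on $\mathcal Q=\mathbb F^*\oplus \mathbb F'^*$ splits componentwise and forces the corresponding property on each summand. Watts then gives $\mathbb F^*\cong\mathcal P$ where $P:=\mathbb F^*(R)$.

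To conclude that $\mathbb F\cong\mathcal P^*$, I need $\mathbb F$ to be reflexive. This follows from reflexivity of $\mathcal Q^*$ (dualizing $\mathcal Q^{**}=\mathcal Q$) together with the decomposition $\mathcal Q^*\cong\mathbb F\oplus\mathbb F'$: the natural biduality map $\mathcal Q^*\to\mathcal Q^{***}$ splits as the direct sum of the biduality maps $\mathbb F\to\mathbb F^{**}$ and $\mathbb F'\to\mathbb F'^{**}$, and since the total map is an isomorphism, so is each summand. Thus $\mathbb F\cong\mathbb F^{**}\cong\mathcal P^*$. Finally, $P$ is a direct summand of the finitely presented $R$-module $Q$ (via the equivalence of Proposition~\ref{tercer} applied to the decomposition of $\mathcal Q$), and a standard argument shows that any direct summand of a finitely presented module is finitely presented. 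The main obstacle here is the ``summand of quasi-coherent is quasi-coherent'' step; everything else is formal bookkeeping with duals and splittings.
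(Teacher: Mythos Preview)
Your proof is correct and follows essentially the same route as the paper's: split the epimorphism $\mathcal Q^*\twoheadrightarrow\mathbb F$, deduce reflexivity of $\mathbb F$ from that of $\mathcal Q^*$, dualize to exhibit $\mathbb F^*$ as a direct summand of $\mathcal Q$, conclude that $\mathbb F^*$ is quasi-coherent and that $P=\mathbb F^*(R)$ is finitely presented as a summand of $Q$. The only difference is in the ``summand of quasi-coherent is quasi-coherent'' step: the paper argues via the adjoint $(-)_{qc}$ (since $(-)_{qc}$ preserves finite direct sums and $\mathcal Q_{qc}=\mathcal Q$, the natural map $(\mathbb F^*)_{qc}\to\mathbb F^*$ is forced to be an isomorphism), whereas you invoke Watts' criterion (Proposition~\ref{Wats}) and check that preservation of direct limits and right-exactness pass to summands; both arguments are equally valid and of comparable length.
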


\begin{proof} $\Rightarrow)$ By Proposition \ref{4.11}, there exists an epimorphism
$\mathcal Q^*\to \mathbb F$, where $Q$ is a finitely presented $R$-module.
Then, $\mathcal Q^*\simeq \mathbb F\oplus \mathbb G$, for some $\mathcal R$-module $\mathbb G$, since $\mathbb F$ is projective. 
In particular, $\mathbb F$ is reflexive since $\mathcal Q^*$ is reflexive. Taking dual modules, 
$\mathcal Q\simeq \mathbb F^*\oplus \mathbb G^*$. Then, 
$$(\mathbb F^*)_{qc}\oplus {\mathbb G^*}_{qc}\simeq \mathcal Q_{qc}=\mathcal Q\simeq \mathbb F^*\oplus \mathbb G^*$$ 
Therefore,  $(\mathbb F^*)_{qc}=\mathbb F^*$ and $(\mathbb F^*)_{qc}(R)$ is a finitely presented $R$-module since it is a direct summand of $Q$. Finally, 
$\mathbb F=((\mathbb F^*)_{qc})^*$.

\end{proof}

\begin{proposition} \label{P5.5} If $\mathbb F$ is an FP module then $\mathbb F^*$ is an FP (right) module.

\end{proposition}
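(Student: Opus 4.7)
The plan is to use Proposition \ref{4.11} to pin $\mathbb F$ down by a presentation of the form $\mathcal P^*\to \mathcal Q^*\to \mathbb F\to 0$ with $P,Q$ finitely presented, and then dualize to express $\mathbb F^*$ as the kernel of a morphism between FP modules, at which point Proposition \ref{C5.14} finishes the job.

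More concretely, since $\mathbb F$ is an FP module, Proposition \ref{4.11} gives an exact sequence
\begin{equation*}
\mathcal P^*\longrightarrow \mathcal Q^*\longrightarrow \mathbb F\longrightarrow 0
\end{equation*}
with $P$ and $Q$ finitely presented $R$-modules. I would then apply the left-exact functor $\Hom_{\mathcal R}(-,\mathcal N)$ for each right $R$-module $N$; using the identification $\Hom_{\mathcal R}(\mathcal Q^*,\mathcal N)=\mathcal Q^{**}(N)$ together with the reflexivity of quasi-coherent modules (Theorem \ref{reflex}), which gives $\mathcal Q^{**}=\mathcal Q$ and $\mathcal P^{**}=\mathcal P$, this produces an exact sequence of $\mathcal R$-modules
\begin{equation*}
0\longrightarrow \mathbb F^*\longrightarrow \mathcal Q\longrightarrow \mathcal P.
\end{equation*}

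Now $\mathcal Q$ and $\mathcal P$ are FP modules by the Example right after Definition \ref{sFP} (quasi-coherent modules associated with finitely presented $R$-modules are FP). Therefore $\mathbb F^*$ is the kernel of a morphism between FP modules, and Proposition \ref{C5.14} tells us that such a kernel is itself an FP module. (Of course $\mathbb F^*$ lives in $\langle\text{\sl Qs-ch}\rangle$ and preserves direct products by the same proposition applied coordinate by coordinate, which is already baked into the definition of being FP via \ref{C5.14}.)

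There is no real obstacle here: the only subtlety worth double-checking is that dualization sends the cokernel presentation of $\mathbb F$ to a kernel presentation of $\mathbb F^*$, and this is exactly the content of reflexivity of $\mathcal P,\mathcal Q$ combined with the left-exactness of $\Hom_{\mathcal R}(-,\mathcal N)$. Once that is in hand, the statement is an immediate consequence of the closure of FP modules under kernels.
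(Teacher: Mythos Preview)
Your proof is correct and follows essentially the same route as the paper: obtain the presentation $\mathcal P^*\to \mathcal Q^*\to \mathbb F\to 0$ from Proposition~\ref{4.11}, dualize to get $0\to \mathbb F^*\to \mathcal Q\to \mathcal P$, and conclude by Proposition~\ref{C5.14}. Your added remarks on reflexivity and left-exactness of $\Hom_{\mathcal R}(-,\mathcal N)$ just make explicit what the paper leaves implicit in the phrase ``taking dual modules''.
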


\begin{proof}  There exists an exact sequence of morphisms
 of $\RR$-modules $\mathcal P^*\to \mathcal Q^*\to \mathbb F\to 0$, where $P$ and $Q$ are finitely presented $R$-modules. Taking dual modules, we obtain the exact sequence $0\to\mathbb F^*\to\mathcal Q\to\mathcal P$.
 Hence, $\mathbb F^*$ is an FP (right) module by Proposition \ref{C5.14}.

 \end{proof}

\begin{proposition} 
\cite[7.14]{Matsumura} \label{1.4} If $0\to \mathcal M_1\overset{i'}\to \mathcal M_2\overset{\pi'}\to\mathcal M_3\to 0$ is an exact sequence of morphisms of $\mathcal R$-modules and $M_3$ is a finitely presented module, then this exact sequence splits.
\end{proposition}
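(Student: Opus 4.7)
My plan rests on the observation that the hypothesis---exactness of the sequence in the ambient category of $\mathcal R$-modules---is strictly stronger than exactness of $0\to M_1\to M_2\to M_3\to 0$ of $R$-modules. Since kernels and cokernels of morphisms of $\mathcal R$-modules are computed pointwise, exactness of $0\to \mathcal M_1\to \mathcal M_2\to \mathcal M_3\to 0$ at every $S$ says that
$$0\to M_1\otimes_R S\to M_2\otimes_R S\to M_3\otimes_R S\to 0$$
is a short exact sequence of abelian groups for \emph{every} $R$-module $S$; in particular the inclusion $M_1\hookrightarrow M_2$ remains injective after tensoring with any $S$, i.e.\ $M_1$ is a pure $R$-submodule of $M_2$. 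The conclusion is then the classical fact that every pure short exact sequence with finitely presented cokernel splits (the Matsumura result cited in the statement), which could be invoked as a black box.

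To make the splitting explicit I would fix a finite presentation $R^m\xrightarrow{A} R^n\xrightarrow{\rho} M_3\to 0$, set $\overline m_i:=\rho(e_i)$ for the standard basis $\{e_i\}$ of $R^n$, and lift each $\overline m_i$ arbitrarily to some $m_i\in M_2$, getting an $R$-linear map $\sigma\colon R^n\to M_2$ with $\pi'\circ\sigma=\rho$. For each row $A_j=(a_{j1},\dots,a_{jn})$ of $A$ the element $x_j:=\sum_i a_{ji}m_i\in M_2$ maps to zero in $M_3$, hence lies in $M_1$. A section $M_3\to M_2$ of $\pi'$ exists precisely when $\sigma$ can be corrected by a map $R^n\to M_1$ so as to annihilate all the $x_j$, i.e.\ when the tuple $(x_1,\dots,x_m)\in M_1^m$ belongs to the image of the induced map $A\colon M_1^n\to M_1^m$.

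Purity does the final job. By construction $(x_1,\dots,x_m)=A(m_1,\dots,m_n)$, so the class of this tuple in $M_2\otimes_R(R^m/A(R^n))\cong M_2^m/A(M_2^n)$ is zero. Since $R^m/A(R^n)$ is finitely presented and $M_1\subset M_2$ is pure, the map $M_1\otimes_R(R^m/A(R^n))\to M_2\otimes_R(R^m/A(R^n))$ is injective, so the corresponding class in $M_1^m/A(M_1^n)$ is already zero; that is, $(x_1,\dots,x_m)\in A(M_1^n)$, as required. Correcting the $m_i$ accordingly yields an $R$-linear section $M_3\to M_2$, and by Proposition~\ref{tercer} this is the same datum as a splitting $\mathcal M_3\to\mathcal M_2$ of $\pi'$ in $\mathcal R$-modules. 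The main obstacle is really only the conceptual step of recognising that the categorical exactness hypothesis is equivalent to purity of the underlying inclusion; once that is in hand the rest is the standard finite-presentation manoeuvre.
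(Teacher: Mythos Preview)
Your argument is correct, and in fact the paper offers no proof of its own here: it simply records the citation to Matsumura, whose Theorem~7.14 is exactly the statement that a pure short exact sequence with finitely presented third term splits. Your genuine contribution is making the bridge explicit---pointwise exactness of $0\to\mathcal M_1\to\mathcal M_2\to\mathcal M_3\to 0$ in the functor category is precisely the assertion that $0\to M_1\to M_2\to M_3\to 0$ is pure-exact---after which you reproduce the standard lifting argument behind Matsumura's result. So there is nothing to compare: you have supplied what the paper leaves to the reference.

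One small bookkeeping remark on the explicit computation. Having fixed $A\colon R^m\to R^n$, you then write ``$A\colon M_1^n\to M_1^m$'' and ``$R^m/A(R^n)$'', silently reversing the direction; what you actually tensor against is the \emph{left} $R$-module obtained as the cokernel of the transposed map. In the commutative setting (Matsumura's) this is invisible, but since the paper works over an arbitrary associative ring you should keep the scalars on the correct side: the obstructions $x_j$ should be written with the ring elements acting on the right of the $m_i$, and the module you tensor with is $R^m/R^nA$ (or however one denotes the left-module cokernel). The substance of the argument is unchanged.
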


\begin{corollary} \label{1.4b} Let $P$ be a finitely presented $R$-module and $M$ an $R$-module. Then,
$$\Ext_{\RR}^1(\mathcal P,\mathcal M)=0.$$ 
\end{corollary}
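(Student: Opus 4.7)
The plan is to show that every short exact sequence $0\to \mathcal M\to \mathbb N\to \mathcal P\to 0$ in the category of $\RR$-modules splits, which forces $\Ext^1_{\RR}(\mathcal P,\mathcal M)=0$. The key observation is that such an $\mathbb N$ is automatically quasi-coherent, after which Proposition \ref{1.4} gives a splitting directly.

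To prove $\mathbb N$ is quasi-coherent, by Proposition \ref{Wats} it suffices to check that $\mathbb N\in\langle\text{\sl Qs-ch}\rangle$ and that $\mathbb N$ is right-exact. For the first, given a direct system $\{S_i\}$, evaluate the exact sequence at $\ilim{i} S_i$ and at each $S_i$; since directed colimits of abelian groups are exact and $\mathcal M,\mathcal P$ preserve direct limits, the short five-lemma forces the comparison map $\ilim{i}\mathbb N(S_i)\to\mathbb N(\ilim{i}S_i)$ to be an isomorphism. For right-exactness, given an exact sequence $A\to B\to C\to 0$ of $R$-modules, form the $3\times 3$ diagram with rows coming from $\mathcal M,\mathbb N,\mathcal P$ and columns from the extension. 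The outer rows are right-exact and the columns are exact; a routine diagram chase (lift an element of $\mathbb N(C)$ through $\mathcal P(C)\to\mathcal P(B)\to\mathbb N(B)$ and correct by an element of $\mathcal M(B)$, then similarly analyze the kernel at $\mathbb N(B)$, invoking right-exactness of $\mathcal M$ in the final step) then yields right-exactness of $\mathbb N$.

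With $\mathbb N$ quasi-coherent, Proposition \ref{1.4} applied to $0\to\mathcal M\to\mathbb N\to\mathcal P\to 0$, in which $P$ is finitely presented, yields the desired splitting. The main (and quite mild) obstacle is organizing the right-exactness diagram chase cleanly; everything else follows formally from results already established in the paper.
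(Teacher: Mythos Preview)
Your proposal is correct and follows essentially the same route as the paper: show that the middle term of any extension is quasi-coherent (via Proposition~\ref{Wats}, i.e.\ it preserves direct limits and is right exact) and then invoke Proposition~\ref{1.4} to split it. The paper simply asserts the two hypotheses of Watts's criterion without further comment, whereas you spell out the five-lemma and diagram-chase justifications; the arguments are otherwise identical.
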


\begin{proof} If 
$$0\to \mathcal M \to \mathbb M\to \mathcal P\to 0$$
is an exact sequence of morphisms of $\RR$-modules, then $\mathbb M$ is quasi-coherent
since $\mathbb M\in {\langle\text{\sl Qs-ch}\rangle}$ and it is right exact. By Proposition \ref{1.4}, the sequence of morphisms splits. Hence,  $\Ext_{\RR}^1(\mathcal P,\mathcal M)=0.$

\end{proof}

\begin{lemma} \label{1.1N} Let $f\colon \mathcal V_2\to \mathcal V_1$ be a morphism of $\mathcal R$-modules between quasi-coherent modules. Then, $f$ is an epimorphism iff $f^*\colon \mathcal V_1^*\to \mathcal V_2^*$ is a monomophism.
\end{lemma}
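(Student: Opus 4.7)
The plan is to reduce both conditions to the surjectivity of the underlying $R$-module map $f_R\colon V_2\to V_1$ associated with $f$ under the equivalence between (right) $R$-modules and quasi-coherent $\RR$-modules. On one side, the remark following Proposition \ref{tercer} identifies $\Coker f$ with the quasi-coherent module $\mathcal{C}$, where $C=\Coker f_R$; hence $f$ is an epimorphism (i.e.\ $\Coker f=0$) if and only if $C=0$, if and only if $f_R$ is surjective.

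On the dual side, I would use Proposition \ref{tercer} to identify $\mathcal V_i^*(S)=\Hom_{\RR}(\mathcal V_i,\mathcal S)=\Hom_R(V_i,S)$, under which $f^*_S$ becomes precomposition $g\mapsto g\circ f_R$. Since kernels of $\RR$-modules are computed pointwise, $f^*$ is a monomorphism iff $f^*_S$ is injective for every $R$-module $S$. If $f_R$ is surjective then any $g\colon V_1\to S$ is determined by $g\circ f_R$, so $f^*_S$ is injective; conversely, taking $S=C$ and the quotient $\pi\colon V_1\to C$, the relation $\pi\circ f_R=0=0\circ f_R$ together with the injectivity of $f^*_C$ forces $\pi=0$, hence $C=0$ and $f_R$ is surjective. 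Combining both equivalences yields the lemma.

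The argument is essentially bookkeeping: the actual content is the standard adjunction fact ``$V_2\to V_1$ surjective iff $\Hom_R(V_1,-)\hookrightarrow\Hom_R(V_2,-)$'', transported through the quasi-coherent equivalence, so I do not anticipate any serious obstacle. An alternative route would be to apply Proposition \ref{trivialon} to $f^*$, obtaining $\Ker f^*=(\Coker f^{**})^*=(\Coker f)^*$ via reflexivity of $\mathcal V_1$ and $\mathcal V_2$, and then to observe that $(\Coker f)^*=0$ forces $\Coker f=0$ (since $\Coker f$ is quasi-coherent, evaluating its dual at $S=C$ would produce $\mathrm{id}_C$). I prefer the direct pointwise computation, however, because the alternative would require first verifying that the module schemes $\mathcal V_i^*$ are themselves reflexive in order to invoke Proposition \ref{trivialon}.
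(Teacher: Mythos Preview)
Your proof is correct. The paper takes essentially your ``alternative route,'' but more simply than you feared: it does not invoke Proposition~\ref{trivialon} (and hence does not need reflexivity of the module schemes $\mathcal V_i^*$). Instead, from the right-exact sequence $\mathcal V_2\to\mathcal V_1\to\Coker f\to 0$ and left exactness of $\Hom_{\RR}(-,\mathcal S)$ one gets $(\Coker f)^*=\Ker f^*$ directly; since $\Coker f$ is quasi-coherent and hence reflexive (Theorem~\ref{reflex}), the vanishing $(\Coker f)^*=0$ yields $\Coker f=(\Coker f)^{**}=0$. Your pointwise computation via Proposition~\ref{tercer} is a valid and more elementary substitute, trading the reflexivity theorem for the classical fact that $\Hom_R(V_1,-)\to\Hom_R(V_2,-)$ is injective precisely when $f_R$ is surjective; the paper's argument, on the other hand, showcases reflexivity and would work verbatim whenever $\Coker f$ is known to be reflexive.
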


\begin{proof} $\Leftarrow)$ $\Coker f$ is the quasi-coherent module associated with $\Coker f_R$, and $(\Coker f)^*=\Ker f^*=0$. Then,
$\Coker f=(\Coker f)^{**}=0$.

\end{proof}

\begin{corollary} \label{1.4c} Let $P$ be a finitely presented $R$-module and $M$ an $R$-module. Then,
$$\Ext_{\RR}^i(\mathcal P,\mathcal M)=0,\text{ for any }i>0.$$ 

\end{corollary}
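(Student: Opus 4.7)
The plan is to proceed by induction on $i$, with the base case $i=1$ given by Corollary \ref{1.4b}. For the inductive step $i \geq 2$, I would exploit dimension shifting via a projective cover of $\mathcal{P}$. Starting from a finite presentation $R^m \to R^n \to P \to 0$, applying the quasi-coherent functor yields the exact sequence $\mathcal{R}^m \to \mathcal{R}^n \to \mathcal{P} \to 0$ of $\mathcal{R}$-modules. Setting $\mathcal{K} := \Ker(\mathcal{R}^n \to \mathcal{P})$, one has the short exact sequence $0 \to \mathcal{K} \to \mathcal{R}^n \to \mathcal{P} \to 0$, and since $\mathcal{R}^n = (\mathcal{R}^n)^*$ is a projective $\mathcal{R}$-module, the long exact sequence in Ext gives
$$\Ext^i_{\mathcal{R}}(\mathcal{P}, \mathcal{M}) \simeq \Ext^{i-1}_{\mathcal{R}}(\mathcal{K}, \mathcal{M}).$$

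The main obstacle is that $\mathcal{K}$ is not itself a quasi-coherent module: it equals the image of $\mathcal{R}^m$ in $\mathcal{R}^n$ inside the functor category, which differs from the quasi-coherent module associated to the $R$-module $\Ker(R^n \to P)$ by a $\mathrm{Tor}_1(P,-)$ subfunctor. Hence the inductive hypothesis does not apply to $\mathcal{K}$ directly. To control $\mathcal{K}$, I would use Lemma \ref{1.1N} and Proposition \ref{trivialon} to identify $\mathcal{K} \simeq (\mathcal{R}^n/\mathcal{P}^*)^*$, fitting into the dual exact sequence $0 \to \mathcal{P}^* \to \mathcal{R}^n \to \mathcal{R}^n/\mathcal{P}^* \to 0$ of right $\mathcal{R}$-modules. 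Iterating the dimension shift and exploiting the projectivity of $\mathcal{P}^*$ at each level should reduce the vanishing inductively to instances of $\Ext^1$ with a quasi-coherent second argument, handled by Corollary \ref{1.4b}.

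A conceptually cleaner alternative would be to dimension-shift on the $\mathcal{M}$ side. Every $R$-module $M$ admits a pure embedding into a pure-injective $R$-module $J$; setting $M' := J/M$, the induced sequence $0 \to \mathcal{M} \to \mathcal{J} \to \mathcal{M}' \to 0$ is exact in the category of $\mathcal{R}$-modules because exactness there coincides with pure-exactness of the underlying $R$-modules (the same mechanism underlying the proofs of Proposition \ref{1.4} and Corollary \ref{1.4b}). The hard part will be to verify that $\mathcal{J}$ is injective in the category of $\mathcal{R}$-modules whenever $J$ is pure-injective; granting this, the long exact sequence yields $\Ext^i_{\mathcal{R}}(\mathcal{P}, \mathcal{M}) \simeq \Ext^{i-1}_{\mathcal{R}}(\mathcal{P}, \mathcal{M}')$ for $i \geq 2$, and induction starting from Corollary \ref{1.4b} closes the argument.
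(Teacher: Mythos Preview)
Your first approach opens exactly as the paper does: dimension-shift through $0\to\mathcal K\to\mathcal R^n\to\mathcal P\to 0$ to reduce to $\Ext^{\,i}_{\mathcal R}(\mathcal K,\mathcal M)=0$ for all $i\geq 1$. The identification $\mathcal K\simeq(\Coker(\mathcal P^*\to\mathcal R^n))^*$ via Proposition~\ref{trivialon} is correct, but from there your outline stalls. ``Iterating the dimension shift'' cannot work, because $\mathcal K$ is not of the form $\mathcal{P'}$ for a finitely presented $P'$, so there is no inductive hypothesis to invoke; and the projectivity of $\mathcal P^*$ in the dual sequence $0\to\mathcal P^*\to\mathcal R^n\to\mathcal R^n/\mathcal P^*\to 0$ gives no direct control over $\Ext^*(\mathcal K,-)$, since dualizing that sequence simply returns you to $0\to\mathcal K\to\mathcal R^n\to\mathcal P\to 0$. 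The paper supplies the missing idea: $\mathcal K$ is an FP module (Proposition~\ref{C5.14}), so Proposition~\ref{P4.2} yields an epimorphism $g\colon\mathcal Q^*\twoheadrightarrow\mathcal K$ with $Q$ finitely presented; composing with $\mathcal K\hookrightarrow\mathcal R^n$ and applying Proposition~\ref{trivialon} once more identifies $\Ker g=\mathcal{Q'}^*$ for another finitely presented $Q'$. Thus $\mathcal K$ admits the length-one projective resolution $0\to\mathcal{Q'}^*\to\mathcal Q^*\to\mathcal K\to 0$, which kills $\Ext^i(\mathcal K,\mathcal M)$ for $i\geq 2$ at once; the case $i=1$ follows because the dual map $\mathcal Q\to\mathcal Q'$ is an epimorphism (Lemma~\ref{1.1N}), whence $\Hom_{\mathcal R}(\mathcal Q^*,\mathcal M)\to\Hom_{\mathcal R}(\mathcal{Q'}^*,\mathcal M)$ is surjective.

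Your second approach is a genuinely different route and would work, but the gap you flag is real and not minor. The statement ``$\mathcal J$ is injective in the category of $\mathcal R$-modules whenever $J$ is pure-injective'' is stronger than what the paper proves: Proposition~\ref{pure} only gives injectivity of $\mathcal J$ in the subcategory $\langle\text{\sl Qs-ch}\rangle$, and it is placed \emph{after} the present corollary. You would therefore have to (i) verify that Proposition~\ref{pure} is logically independent of Corollary~\ref{1.4c}, and (ii) argue that the Ext groups at hand may be computed in $\langle\text{\sl Qs-ch}\rangle$ rather than in the full functor category---which amounts to checking that every relevant syzygy stays in $\langle\text{\sl Qs-ch}\rangle$. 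Both are doable, but neither is in your proposal; the paper's approach via the explicit short projective resolution of $\mathcal K$ sidesteps all of this.
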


\begin{proof} Let $R^n\to P$ be an epimorphism and let $\pi\colon \mathcal R^n\to \mathcal P$ be the induced morphism. Observe that $\Ext_{\RR}^1(\mathcal P,\mathcal M)=0$, by Corollary \ref{1.4b} and 
$\Ext_{\RR}^{i+1}(\mathcal P,\mathcal M)=\Ext_{\RR}^{i}(\Ker \pi,\mathcal M)$ for any $i\geq 1$.

$\Ker \pi$ is an FP (right) module, by \ref{C5.14}. There exists an epimorphism $g\colon \mathcal Q^*\to \Ker\pi$, where $Q$ is a finitely presented $R$-module, by \ref{4.11}. Let $g'$ be the composite morphism $\mathcal Q^*\to \Ker \pi\subseteq \mathcal R^n$. $\Ker g=\Ker g'=(\Coker g'^*)^*$, by \ref{trivialon}. $\Coker g'^*$ is equal to the quasi-coherent $\RR$-module associated with $\Coker g'^*_R=:Q'$, which is a finitely presented $R$-module.
We have the exact sequence of morphisms
$$0\to \mathcal Q'^*\to \mathcal Q^*\to \Ker \pi\to 0$$
Then, $\Ext_{\RR}^i(\Ker \pi,\mathcal M)=0$, for $i>1$. Taking dual $\RR$-modules we have the exact sequence of morphisms
 $$0\to (\Ker \pi)^*\to \mathcal Q\to \mathcal Q'\to 0,$$
by \ref{1.1N}. Hence, $\Ext_{\RR}^1(\Ker \pi,\mathcal M)=0$.

\end{proof} 

\begin{theorem} \label{T5.13} Let $\mathbb F$ be an FP module. Then, $\mathbb F$ is reflexive and
$$\Ext_{\RR}^i(\mathbb F,\mathcal M)=0, \text{ for any } i>0 \text{ and  for any (right) $\RR$-module $M$}.$$
\end{theorem}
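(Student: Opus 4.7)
The plan is to upgrade the two-term presentation from Proposition \ref{4.11} into a genuine projective resolution of $\mathbb F$ of length two, and then read both conclusions off it.

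Starting from $\mathcal P^*\xrightarrow{\alpha}\mathcal Q^*\to\mathbb F\to 0$ with $P,Q$ finitely presented, both $\mathcal P^*$ and $\mathcal Q^*$ are reflexive, so Proposition \ref{trivialon} identifies $\Ker\alpha=(\Coker\alpha^*)^*$. The cokernel of $\alpha^*\colon\mathcal Q\to\mathcal P$ is $\mathcal C$ with $C:=\Coker(Q\to P)$, which is again finitely presented as an $R$-module. Hence $\Ker\alpha=\mathcal C^*$, giving the length-two projective resolution
\[0\to\mathcal C^*\to\mathcal P^*\to\mathcal Q^*\to\mathbb F\to 0\]
(all three left-hand terms are projective $\RR$-modules, since each $\mathcal N^*$ with $N$ finitely presented is projective).

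The Ext vanishing is then immediate. Applying $\Hom_{\mathcal R}(-,\mathcal M)$ to the deleted resolution and using Theorem \ref{prop4} converts it into the complex
\[Q\otimes_R M\longrightarrow P\otimes_R M\longrightarrow C\otimes_R M,\]
which is the tensor of $Q\to P\to C\to 0$ with $M$. By right-exactness of tensor it is exact at $P\otimes_R M$ and at $C\otimes_R M$, so $\Ext^i_{\mathcal R}(\mathbb F,\mathcal M)=0$ for $i=1,2$; higher $\Ext$'s vanish because the resolution has length two.

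For reflexivity, dualize the four-term resolution twice. Setting $\mathbb K:=\Ima\alpha$ (which is FP by \ref{C5.14}, and whose dual $\mathbb K^*$ is again FP by \ref{P5.5}), the $\Ext^1$-vanishings needed to keep each splitting-and-dualization right-exact are all available: for $\mathbb K$, $\mathbb F$ and $\mathbb K^*$ from the previous paragraph, and for $\mathcal C$ from Corollary \ref{1.4c}. A first dualization produces
\[0\to\mathbb F^*\to\mathcal Q\to\mathcal P\to\mathcal C\to 0,\]
and a second produces
\[0\to\mathcal C^*\to\mathcal P^*\to\mathcal Q^*\to\mathbb F^{**}\to 0.\]
Comparing the latter with the original resolution via the natural map $\mathbb M\to\mathbb M^{**}$, the three left-hand columns are isomorphisms by reflexivity of $\mathcal C^*,\mathcal P^*,\mathcal Q^*$, and the four-term five lemma forces $\mathbb F\to\mathbb F^{**}$ to be an isomorphism.

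The delicate step is the very first one: the identification $\Ker\alpha=\mathcal C^*$ via Proposition \ref{trivialon} is what promotes the two-term presentation into a bona fide projective resolution with a finitely presented tail. Without it, neither the tensor computation for Ext nor the five-lemma comparison for reflexivity can even be set up.
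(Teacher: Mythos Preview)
Your proof is correct and rests on the same structural observation as the paper's: the two-term presentation from Proposition~\ref{4.11} extends to a four-term exact sequence whose extra term comes from a finitely presented module (your $C=\Coker(Q\to P)$ is the paper's $P_3=\Coker f^*_R$). The organization differs slightly. You build the projective resolution $0\to\mathcal C^*\to\mathcal P^*\to\mathcal Q^*\to\mathbb F\to 0$ first and read off $\Ext^i(\mathbb F,\mathcal M)=0$ directly via the tensor identification of Theorem~\ref{prop4}; the paper instead dualizes first to the coresolution $0\to\mathbb F^*\to\mathcal P_1\to\mathcal P_2\to\mathcal P_3\to 0$, invokes Corollary~\ref{1.4c} to obtain $\Ext^i(\mathbb F^*,\mathcal M)=0$, and then passes back to $\mathbb F$ via reflexivity and Proposition~\ref{P5.5}. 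Your tensor computation is a touch more direct for the Ext vanishing, while the paper's ordering makes the reflexivity step shorter (one dualization rather than two, with no need to track the intermediate image $\mathbb K$ and $\mathbb K^*$).
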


\begin{proof} By \ref{4.11}, there exists an exact sequence of morphisms $\mathcal P_2^*\overset f\to \mathcal P_1^*\overset g\to \mathbb F\to 0$, where $P_1$ and $P_2$ are finitely prensented $R$-modules.
Taking dual $\RR$-modules, we have the exact sequence of morphisms
$$0\to \mathbb F^*\to \mathcal P_1\overset{f^*}\to \mathcal P_2$$
Put $P_3:=\Coker f^*_R$, which is a finitely presented $R$-module. Then, we have the exact sequence of morphisms of $\RR$-modules
$$0\to \mathbb F^*\to \mathcal P_1\overset{f^*}\to \mathcal P_2\to \mathcal P_3\to 0$$
By Corollary \ref{1.4c}, it is easy to prove that $\Ext_{\RR}^i(\mathbb F^*,\mathcal M)=0,$ for any  $i>0$ and  for any $R$-module $M$. Hence, the sequence of morphisms 
$$0\to \mathcal P_3^* \to \mathcal P^*_2\overset f\to \mathcal P^*_1\to \mathbb F^{**}\to 0$$
is exact and $\mathbb F=\mathbb F^{**}$. Finally, $\mathbb F$ is the dual module of $\mathbb F^*$,  which is an FP (right) module, by \ref{P5.5}. We have just proved that $\Ext_{\RR}^i(\mathbb F,\mathcal M)=0,$ for any $i>0$  and  for any (right) $R$-module $M$.

\end{proof}

\begin{corollary} \label{5.4} Let $\mathbb F_1$, $\mathbb F_2$ and $\mathbb F_3$ be FP modules. If $\mathbb F_1\to \mathbb F_2\to \mathbb F_3$ is an exact sequence of morphisms of $\RR$-modules, then the dual sequence $\mathbb F_3^*\to \mathbb F_2^*\to \mathbb F_1^*$
is exact.

\end{corollary}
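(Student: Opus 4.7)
The plan is to reduce the corollary to the exactness of the contravariant duality functor $(-)^{*}$ on short exact sequences of FP modules, and then to decompose the given three-term sequence accordingly.

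First I would establish the following auxiliary statement: if $0\to\mathbb F'\to\mathbb F\to\mathbb F''\to 0$ is a short exact sequence of FP modules, then the dualized sequence $0\to (\mathbb F'')^{*}\to\mathbb F^{*}\to (\mathbb F')^{*}\to 0$ is exact. For any right $R$-module $N$, applying $\Hom_{\mathcal R}(-,\mathcal N)$ produces the standard long exact sequence, which truncates to a short exact sequence at $(\mathbb F')^{*}(N)$ because $\Ext^{1}_{\mathcal R}(\mathbb F'',\mathcal N)=0$ by Theorem \ref{T5.13}. Functoriality in $N$ then promotes this to a short exact sequence of $\mathcal R$-modules.

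Next I would decompose the given sequence $\mathbb F_{1}\xrightarrow{f}\mathbb F_{2}\xrightarrow{g}\mathbb F_{3}$ by setting $\mathbb I:=\Ima f=\Ker g$ and $\mathbb J:=\Ima g$. Both $\mathbb I$ and $\mathbb J$ are FP by Proposition \ref{C5.14}, as is $\Coker g$, so the epimorphism $\mathbb F_{1}\twoheadrightarrow\mathbb I$ and the two short exact sequences $0\to\mathbb I\to\mathbb F_{2}\to\mathbb J\to 0$ and $0\to\mathbb J\to\mathbb F_{3}\to\Coker g\to 0$ all have FP terms. Applying the auxiliary statement yields the exact sequence $0\to\mathbb J^{*}\to\mathbb F_{2}^{*}\to\mathbb I^{*}\to 0$ and the surjection $\mathbb F_{3}^{*}\twoheadrightarrow\mathbb J^{*}$; moreover, left-exactness of $\Hom$ applied to $\mathbb F_{1}\twoheadrightarrow\mathbb I$ gives a monomorphism $\mathbb I^{*}\hookrightarrow\mathbb F_{1}^{*}$.

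Finally, since $f$ factors as $\mathbb F_{1}\twoheadrightarrow\mathbb I\hookrightarrow\mathbb F_{2}$ and $g$ as $\mathbb F_{2}\twoheadrightarrow\mathbb J\hookrightarrow\mathbb F_{3}$, their duals factor as $\mathbb F_{2}^{*}\twoheadrightarrow\mathbb I^{*}\hookrightarrow\mathbb F_{1}^{*}$ and $\mathbb F_{3}^{*}\twoheadrightarrow\mathbb J^{*}\hookrightarrow\mathbb F_{2}^{*}$ respectively, so $\Ker f^{*}=\mathbb J^{*}=\Ima g^{*}$, which is the desired exactness. The only non-formal input is the vanishing of $\Ext^{1}$ from Theorem \ref{T5.13}; everything else is a routine diagram chase, so I do not anticipate a substantial obstacle.
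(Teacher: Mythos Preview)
Your argument is correct and is precisely the intended one: the paper states this corollary without proof immediately after Theorem~\ref{T5.13}, and your write-up simply makes explicit the routine reduction to that $\Ext^{1}$-vanishing via the decomposition into short exact sequences of FP modules (using Proposition~\ref{C5.14} to guarantee the intermediate terms are FP). There is nothing to add.
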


\begin{corollary} \label{TH} Let $\mathbb F$ be a right $\mathcal R$-module. $\mathbb F$ is an FP (right) module iff there exists an exact sequence of morphisms of $\RR$-modules
$$0\to \mathbb F\to \mathcal P\to \mathcal Q,$$
 where $P$ and $Q$ are finitely presented $R$-modules.
\end{corollary}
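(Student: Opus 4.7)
The plan is to treat the two implications separately, with the $(\Leftarrow)$ direction being immediate from the stability of FP modules under kernels, and the $(\Rightarrow)$ direction obtained by a dualization trick analogous to the one used in the proof of Theorem \ref{T5.13}, but applied to $\mathbb F^*$ rather than directly to $\mathbb F$.

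For $(\Leftarrow)$, suppose we are given an exact sequence $0 \to \mathbb F \to \mathcal P \to \mathcal Q$ with $P, Q$ finitely presented. By the example following Definition \ref{sFP} (or by Proposition \ref{4.4b}), the quasi-coherent modules $\mathcal P$ and $\mathcal Q$ are FP. Hence $\mathbb F$, as the kernel of a morphism between FP modules, is itself FP by Proposition \ref{C5.14}.

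For $(\Rightarrow)$, assume $\mathbb F$ is an FP (right) module. First invoke Proposition \ref{P5.5} to conclude that $\mathbb F^*$ is an FP module of the opposite sidedness. Proposition \ref{4.11} then yields a presentation
$$\mathcal P_2^* \overset{f}{\to} \mathcal P_1^* \to \mathbb F^* \to 0$$
with $P_1, P_2$ finitely presented $R$-modules. Dualizing this sequence, and applying Proposition \ref{trivialon} to the morphism $f^*\colon \mathcal P_1 \to \mathcal P_2$ (valid since $\mathcal P_1, \mathcal P_2$ are reflexive by Theorem \ref{reflex}), yields
$$\Ker(f^*\colon \mathcal P_1 \to \mathcal P_2) \;=\; (\Coker f)^{*} \;=\; (\mathbb F^*)^{*} \;=\; \mathbb F^{**}.$$
Finally, Theorem \ref{T5.13} guarantees that any FP module is reflexive, so $\mathbb F = \mathbb F^{**}$, producing the desired exact sequence $0 \to \mathbb F \to \mathcal P_1 \to \mathcal P_2$.

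The principal subtlety lies less in any single calculation than in the bookkeeping of sidedness: the involution $(-)^{*}$ flips right modules to left modules and back, so the round trip $\mathbb F \rightsquigarrow \mathbb F^* \rightsquigarrow \mathbb F^{**}$ must produce a sequence with each $\mathcal P_i$ of the sidedness matching $\mathbb F$. This is ensured by the left/right interchange convention stated in the preliminary remark, together with the reflexivity of FP modules from Theorem \ref{T5.13}. Once sidedness is tracked, the core of the argument is exactly the cokernel-to-kernel duality supplied by Proposition \ref{trivialon}.
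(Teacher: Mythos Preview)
Your proof is correct and follows essentially the same route as the paper: for $(\Leftarrow)$ invoke Proposition \ref{C5.14}, and for $(\Rightarrow)$ pass to $\mathbb F^*$ via Proposition \ref{P5.5}, take a presentation by Proposition \ref{4.11}, dualize, and identify $\mathbb F^{**}=\mathbb F$ by Theorem \ref{T5.13}. The paper simply says ``taking dual $\mathcal R$-modules'' where you spell out the kernel identification via Proposition \ref{trivialon}; both amount to the left exactness of $(-)^*$.
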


\begin{proof} $\Leftarrow)$ It is an immediate consequence of \ref{C5.14}.

$\Rightarrow)$ $\mathbb F^*$ is an FP module, by \ref{P5.5}. By \ref{4.11}, there exists an exact sequence of morphisms of $\RR$-modules
$\mathcal Q^*\to \mathcal P^*\to \mathbb F^*$, where $P$ and $Q$ are finitely presented $R$-modules. Taking dual $\RR$-modules,
we have the exact sequence $$0\to \mathbb F\overset{\text{\ref{T5.13}}}=\mathbb F^{**}\to \mathcal P\to \mathcal Q.$$
\end{proof}

\begin{corollary} Let $\mathbb M$ be an $\mathcal R$-module.
$\mathbb M$ is an FP module iff it is reflexive and $\mathbb M, \mathbb M^*\in \langle\text{\sl Qs-ch}\rangle$.

\end{corollary}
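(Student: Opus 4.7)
The statement asks for an intrinsic characterization of FP modules inside $\langle\text{\sl Qs-ch}\rangle$: reflexivity of $\mathbb M$ together with both $\mathbb M$ and $\mathbb M^*$ being in $\langle\text{\sl Qs-ch}\rangle$. The plan is a clean two-sided implication, with both directions unpacking into applications of results already established earlier in the section, the crucial ones being the reflexivity/vanishing Theorem \ref{T5.13}, the duality stability \ref{P5.5}, and the direct product preservation of the dual \ref{C5.3}.

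For the forward direction $(\Rightarrow)$, I would start from the assumption that $\mathbb M$ is an FP module. By Definition \ref{sFP}, this already gives $\mathbb M\in\langle\text{\sl Qs-ch}\rangle$. Theorem \ref{T5.13} provides reflexivity, and Proposition \ref{P5.5} shows $\mathbb M^*$ is an FP (right) module; in particular $\mathbb M^*\in\langle\text{\sl Qs-ch}\rangle$. This half is essentially a bookkeeping step.

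For the converse $(\Leftarrow)$, the key observation is Corollary \ref{C5.3}: for any object of $\langle\text{\sl Qs-ch}\rangle$, its dual preserves direct products. Applying this to $\mathbb M^*\in\langle\text{\sl Qs-ch}\rangle$ yields that $\mathbb M^{**}$ preserves direct products. By reflexivity, $\mathbb M\simeq \mathbb M^{**}$, so $\mathbb M$ preserves direct products as well. Combined with the hypothesis $\mathbb M\in\langle\text{\sl Qs-ch}\rangle$, Definition \ref{sFP} immediately qualifies $\mathbb M$ as an FP module.

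There is really no major obstacle here: both directions reduce to invoking the correct previously proved result. The only point requiring a moment of care is that the isomorphism $\mathbb M\simeq\mathbb M^{**}$ supplied by reflexivity is functorial in $S$, so the induced identification between $\mathbb M(\prod_i S_i)$ and $\prod_i\mathbb M(S_i)$ is genuinely the natural map referenced in Definition \ref{sFP}; this follows from the definition of the reflexivity morphism in the preliminaries, but is worth a line in the written proof.
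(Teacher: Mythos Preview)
Your proof is correct and follows essentially the same approach as the paper: the forward direction cites \ref{T5.13} and \ref{P5.5}, and the converse uses \ref{C5.3} together with reflexivity. The only cosmetic difference is that the paper applies \ref{C5.3} to $\mathbb M$ (so $\mathbb M^*$ is FP, hence $\mathbb M=\mathbb M^{**}$ is FP by \ref{P5.5}), while you apply \ref{C5.3} directly to $\mathbb M^*$ and then use reflexivity; both routes are equivalent.
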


\begin{proof} $\Rightarrow)$ It is an immediate consequence of Theorem \ref{T5.13}.

$\Leftarrow)$ By Corollary \ref{C5.3}, $\mathbb M^*$ preserves direct products, therefore it is an FP module. By Proposition \ref{P5.5}, $\mathbb M=\mathbb M^{**}$ is an FP module.

\end{proof}

\begin{lemma} \label{L6.6} Let $\mathbb F$ be an FP module and $\{\mathbb M_i\}$ a direct system of $\RR$-modules. Then,
$\Hom_{\RR}(\mathbb F,\ilim{i} \mathbb M_i)=\ilim{i}
\Hom_{\RR}(\mathbb F,\mathbb M_i).$\end{lemma}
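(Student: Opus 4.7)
The plan is to leverage the presentation of FP modules by module schemes (Proposition \ref{4.11}) and reduce the claim to the corresponding statement for $\mathcal P^*$, which is already available as Theorem \ref{L5.11}.

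First, since $\mathbb F$ is an FP module, by Proposition \ref{4.11} I would choose an exact sequence
$$\mathcal P^*\to \mathcal Q^*\to \mathbb F\to 0,$$
with $P$ and $Q$ finitely presented $R$-modules. For any $\RR$-module $\mathbb N$, the contravariant functor $\Hom_{\RR}(-,\mathbb N)$ is left-exact, so applying it yields the exact sequence
$$0\to \Hom_{\RR}(\mathbb F,\mathbb N)\to \Hom_{\RR}(\mathcal Q^*,\mathbb N)\to \Hom_{\RR}(\mathcal P^*,\mathbb N).$$

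Next, I would apply this with $\mathbb N=\ilim{i}\mathbb M_i$ and also with each $\mathbb N=\mathbb M_i$; taking $\ilim{i}$ of the latter (and using that direct limits over an upward directed set are exact in the category of abelian groups) produces a commutative diagram with exact rows
$$\xymatrix{
0 \ar[r] & \ilim{i}\Hom_{\RR}(\mathbb F,\mathbb M_i) \ar[r] \ar[d] & \ilim{i}\Hom_{\RR}(\mathcal Q^*,\mathbb M_i) \ar[r] \ar[d] & \ilim{i}\Hom_{\RR}(\mathcal P^*,\mathbb M_i) \ar[d] \\
0 \ar[r] & \Hom_{\RR}(\mathbb F,\ilim{i}\mathbb M_i) \ar[r] & \Hom_{\RR}(\mathcal Q^*,\ilim{i}\mathbb M_i) \ar[r] & \Hom_{\RR}(\mathcal P^*,\ilim{i}\mathbb M_i)
}$$

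Finally, by Theorem \ref{L5.11}, the two rightmost vertical arrows are isomorphisms. A direct diagram chase (or an application of the five lemma after extending trivially on the left) then shows that the leftmost vertical arrow is an isomorphism as well, giving the desired equality. The argument is essentially formal; the only step requiring the hypothesis that $\mathbb F$ is FP is the existence of the two-term $\mathcal P^*$-$\mathcal Q^*$ presentation, so the main (already-done) work is encapsulated in Proposition \ref{4.11} and Theorem \ref{L5.11}, and no real obstacle remains.
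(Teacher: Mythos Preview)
Your proof is correct and follows essentially the same approach as the paper: both use the presentation from Proposition~\ref{4.11} and reduce to Theorem~\ref{L5.11} via left-exactness of $\Hom$. The paper merely states that the remaining step is ``easy to prove,'' whereas you have spelled out the diagram chase explicitly.
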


\begin{proof} By \ref{4.11}, there exists an exact sequence of morphisms $\mathcal P^*\to \mathcal Q^*\to \mathbb F\to 0$, where $P$ and $Q$ are finitely presented $R$-modules. By \ref{L5.11}, $\Hom_{\RR}(\mathcal P^*,\ilim{i} \mathbb M_i)=\ilim{i}
\Hom_{\RR}(\mathcal P^*,\mathbb M_i)$, for any finitely presented $\RR$-module $P$. Now it is easy to prove that $\Hom_{\RR}(\mathbb F,\ilim{i} \mathbb M_i)=\ilim{i}
\Hom_{\RR}(\mathbb F,\mathbb M_i).$

\end{proof}

\begin{theorem} \label{T5.17} Let $\mathbb M$ be an $\RR$-module. $\mathbb M\in \langle\text{\sl Qs-ch}\rangle$ iff it is a direct limit of FP modules.
\end{theorem}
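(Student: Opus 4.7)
The direction $(\Leftarrow)$ is immediate from what has been established: every FP module lies in $\langle\text{\sl Qs-ch}\rangle$ by definition, and $\langle\text{\sl Qs-ch}\rangle$ is closed under direct limits by Proposition \ref{4.85}. So if $\mathbb M=\ilim{i}\mathbb F_i$ with each $\mathbb F_i$ an FP module, then $\mathbb M\in \langle\text{\sl Qs-ch}\rangle$.

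For $(\Rightarrow)$, the plan is to take the projective presentation supplied by Theorem \ref{4.6N}, namely an exact sequence $\oplus_{i\in I}\mathcal Q_i^* \overset{f}\to \oplus_{j\in J}\mathcal P_j^* \to \mathbb M\to 0$ with each $P_j,Q_i$ finitely presented, and exhibit $\mathbb M$ as the direct limit of its natural finite sub-presentations. For any finite subset $I'\subseteq I$, the module $\oplus_{i\in I'}\mathcal Q_i^*$ is a finite direct sum of FP modules, hence itself an FP module by Proposition \ref{C5.14}. Writing $\oplus_{j\in J}\mathcal P_j^*=\ilim{J'}\oplus_{j\in J'}\mathcal P_j^*$ over the finite subsets $J'\subseteq J$, Lemma \ref{L6.6} gives
$$\Hom_{\RR}(\oplus_{i\in I'}\mathcal Q_i^*,\oplus_{j\in J}\mathcal P_j^*)=\ilim{J'}\Hom_{\RR}(\oplus_{i\in I'}\mathcal Q_i^*,\oplus_{j\in J'}\mathcal P_j^*),$$
so the restriction $f|_{\oplus_{I'}\mathcal Q_i^*}$ factors through some $\oplus_{j\in J(I')}\mathcal P_j^*$ with $J(I')\subseteq J$ finite.

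Let $K$ be the set of pairs $(I',J')$ with $I'\subseteq I$ and $J'\subseteq J$ finite, $J'\supseteq J(I')$, ordered by componentwise inclusion; given $(I_1',J_1'),(I_2',J_2')\in K$ one may take $I'=I_1'\cup I_2'$ and $J'=J_1'\cup J_2'\cup J(I')$, so $K$ is upward directed. For each $(I',J')\in K$, define $\mathbb F_{(I',J')}:=\Coker(\oplus_{i\in I'}\mathcal Q_i^*\to \oplus_{j\in J'}\mathcal P_j^*)$, which is an FP module by Proposition \ref{C5.14}. The natural inclusions on the sources and targets make $(I',J')\mapsto \mathbb F_{(I',J')}$ a direct system, and each $\mathbb F_{(I',J')}$ receives a canonical map to $\mathbb M$ (induced by $\oplus_{j\in J'}\mathcal P_j^*\hookrightarrow \oplus_{j\in J}\mathcal P_j^*\twoheadrightarrow \mathbb M$, whose composition kills $\Ima(f|_{\oplus_{I'}\mathcal Q_i^*})$). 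Since the projections $K\to \{\text{finite subsets of }I\}$ and $K\to \{\text{finite subsets of }J\}$ are cofinal, and cokernels commute with direct limits, one gets
$$\ilim{(I',J')\in K}\mathbb F_{(I',J')}=\Coker\bigl(\oplus_{i\in I}\mathcal Q_i^*\overset{f}\to \oplus_{j\in J}\mathcal P_j^*\bigr)=\mathbb M,$$
which completes the proof.

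The main obstacle is the bookkeeping required to guarantee that restrictions of $f$ to finite pieces land in finite pieces of the target — this is precisely where Lemma \ref{L6.6} (the fact that FP modules commute with direct limits in the second argument of $\Hom$) is needed, and why the indexing set $K$ must couple $I'$ with a sufficiently large $J'$ rather than letting them vary independently.
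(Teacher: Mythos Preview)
Your proof is correct and follows essentially the same route as the paper: both use the presentation from Theorem~\ref{4.6N}, invoke Lemma~\ref{L6.6} to factor the restriction of $f$ to a finite piece of the source through a finite piece of the target, index the system by compatible pairs of finite subsets, and identify the direct limit of the resulting cokernels with $\mathbb M$. Your write-up simply makes the directedness of the index set and the cofinality argument more explicit than the paper's ``it is easy to check.''
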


\begin{proof}
$\Rightarrow)$ By \ref{4.6N}, there exists an exact sequence of morphism of $\mathcal R$-modules
$$\oplus_{i\in I} \mathcal P_i^*\to \oplus_{j\in J} \mathcal Q_j^*\to\mathbb M\to 0,$$ where $P_i,Q_j$ are finitely presented $R$-modules, for any $i,j$.
Let $F$ (respectively $G$) be the set of all finite subsets of $I$ (respectively $J$). By \ref{L6.6}, given $I'\in F$ there exists $J'\in G$ such that the composite morphism
$\oplus_{i\in I'} \mathcal P_i^*\hookrightarrow \oplus_{i\in I} \mathcal P_i^*\to \oplus_{j\in J} \mathcal Q_j^*$ factors through $\oplus_{j\in J'} \mathcal Q_j^*$, since $\oplus_{i\in I'} \mathcal P_i^*=(\oplus_{i\in I'} \mathcal P_i)^*$ is an FP module. We will say that $J'\geq I'$ and we will denote
$$\mathbb F_{J'\geq I'} :=\Coker[\oplus_{i\in I'} \mathcal P_i^*\to \oplus_{j\in J'} \mathcal Q_j^*].$$
Let $H$ be the set of pairs $(J',I')$, where $J'\in G$, $I'\in F$ and $J'\geq I'$.
Now, it is easy to check that $\mathbb M=\ilim{(J',I')\in H} \mathbb F_{J'\geq I'}.$

$\Leftarrow)$ It is an immediate consequence of \ref{4.85}.

\end{proof}

\begin{corollary} $\mathbb M\in \langle\text{\sl Qs-ch}\rangle$ is an FP module iff $$\Hom_{\mathcal R}(\mathbb M,\ilim{i\in I} \mathbb M_i)=\ilim{i\in I}\Hom_{\mathcal R}(\mathbb M,\mathbb M_i)$$
for any direct system of $\mathcal R$-modules $\{\mathbb M_i\}_{i\in I}$.
\end{corollary}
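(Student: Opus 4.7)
The proof splits into two directions. The forward direction ($\Rightarrow$) is exactly Lemma \ref{L6.6}, applied to an arbitrary direct system $\{\mathbb M_i\}_{i\in I}$; no further work is needed.

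For the reverse direction ($\Leftarrow$), the plan is to realize $\mathbb M$ as a retract of an FP module. By Theorem \ref{T5.17}, I would first write $\mathbb M = \ilim{i\in I}\mathbb F_i$ where each $\mathbb F_i$ is an FP module, and let $\pi_i\colon \mathbb F_i \to \mathbb M$ denote the structural morphism. The hypothesis applied to this particular direct system gives
$$\Hom_{\mathcal R}(\mathbb M,\mathbb M) = \Hom_{\mathcal R}(\mathbb M,\ilim{i}\mathbb F_i) = \ilim{i}\Hom_{\mathcal R}(\mathbb M,\mathbb F_i).$$
Under this identification, $\mathrm{Id}_{\mathbb M}$ corresponds to an element of the colimit represented by some $f\colon \mathbb M \to \mathbb F_{i_0}$. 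Since the isomorphism between the two Hom-groups sends the class of $f$ to the composite $\pi_{i_0}\circ f$, one obtains $\pi_{i_0}\circ f = \mathrm{Id}_{\mathbb M}$, i.e. $f$ is a split monomorphism and $\mathbb M$ is a direct summand of $\mathbb F_{i_0}$.

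To finish, I would consider the idempotent endomorphism $e := f\circ \pi_{i_0}\colon \mathbb F_{i_0} \to \mathbb F_{i_0}$. Its image is $f(\mathbb M)$, which is isomorphic to $\mathbb M$ via $f$. Since $\mathbb F_{i_0}$ is an FP module, Proposition \ref{C5.14} gives that $\Ima e$ is an FP module, and hence so is $\mathbb M$. The only conceptual step is recognizing that the hypothesis, when fed the specific direct system provided by Theorem \ref{T5.17}, forces the identity of $\mathbb M$ to factor through an FP term; everything else is just the observation that FP modules form a full subcategory closed under images.
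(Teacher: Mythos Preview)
Your proposal is correct and follows essentially the same route as the paper: invoke Lemma \ref{L6.6} for the forward direction, and for the converse use Theorem \ref{T5.17} to write $\mathbb M$ as a direct limit of FP modules, apply the hypothesis to factor $\mathrm{Id}_{\mathbb M}$ through some $\mathbb F_{i_0}$, and conclude via Proposition \ref{C5.14}. The only difference is that you spell out the idempotent argument to extract $\mathbb M$ as $\Ima e$, whereas the paper passes directly from ``direct summand of an FP module'' to ``FP module'' by citing \ref{C5.14}.
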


\begin{proof}  $\Rightarrow$) It is Lemma \ref{L6.6}.

$\Leftarrow$)  By Theorem \ref{T5.17}, $\mathbb M=\ilim{i\in I} \mathbb F_i$, where $\mathbb F_i$ is a FP modules, for any $i$. The identity morphism
$\mathbb M \to \mathbb M=\ilim{i\in I} \mathbb F_i$ factors through a morphism 
$\mathbb M\to \mathbb F_i$, for some $i\in I$. Then, $\mathbb M$ is a direct summand of $\mathbb F_i$. By Proposition \ref{C5.14}, $\mathbb M$ is an FP module.

\end{proof}

\begin{proposition} 
Let $I$ be an injective $R$-module. Then,
$$\Hom_{\RR}(\mathbb M,\mathcal I)=\Hom_{R}(\mathbb M(R),I),$$
for any $\RR$-module $\mathbb M\in \langle\text{\sl Qs-ch}\rangle$. In particular,
$\mathcal I$ is an injective object of $\langle\text{\sl Qs-ch}\rangle$.
\end{proposition}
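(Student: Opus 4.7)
The plan is to show that the natural evaluation-at-$R$ map
$$\Phi_{\mathbb M}\colon \Hom_{\RR}(\mathbb M,\mathcal I)\to \Hom_R(\mathbb M(R),I),\quad f\mapsto f_R,$$
is an isomorphism for all $\mathbb M\in\langle\text{\sl Qs-ch}\rangle$, and then deduce that $\mathcal I$ is injective in $\langle\text{\sl Qs-ch}\rangle$ from the injectivity of $I$ as an $R$-module. I would verify $\Phi$ first on $\mathcal P^*$ for $P$ finitely presented, extend to direct sums $\oplus_j\mathcal P_j^*$, and finally pass to arbitrary $\mathbb M$ via the presentation furnished by Theorem~\ref{4.6N}.

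The technical core is the following algebraic identity: for $P$ finitely presented and $I$ injective, the canonical map $P\otimes_R I\to \Hom_R(\Hom_R(P,R),I)$ is an isomorphism. I would prove it by fixing a presentation $R^m\to R^n\to P\to 0$ and comparing cokernels. Tensoring with $I$ gives the right-exact sequence $I^m\to I^n\to P\otimes_R I\to 0$. Dualizing yields the left-exact sequence $0\to \Hom_R(P,R)\to R^n\to R^m$, and applying $\Hom_R(-,I)$ (which is exact on short exact sequences since $I$ is injective) produces a right-exact sequence $I^m\to I^n\to \Hom_R(\Hom_R(P,R),I)\to 0$ with the same connecting map. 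Comparing cokernels gives the identity.

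Granted this, $\Phi_{\mathcal P^*}$ is an isomorphism because $\Hom_{\RR}(\mathcal P^*,\mathcal I)=P\otimes_R I$ by Theorem~\ref{prop4}, while $\Hom_R(\mathcal P^*(R),I)=\Hom_R(\Hom_R(P,R),I)$, and the two coincide. Both $\Hom_{\RR}(-,\mathcal I)$ and $\Hom_R((-)(R),I)$ convert direct sums in the source into direct products, so $\Phi$ is also an isomorphism on $\oplus_j\mathcal P_j^*$. For a general $\mathbb M\in\langle\text{\sl Qs-ch}\rangle$, take an exact sequence $\oplus_i\mathcal Q_i^*\to\oplus_j\mathcal P_j^*\to\mathbb M\to 0$ from Theorem~\ref{4.6N}, and apply the two functors. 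Evaluation at $R$ preserves cokernels and $\Hom_R(-,I)$ is left exact, so both resulting sequences are left exact with identified middle and right terms; comparing kernels shows that $\Phi_{\mathbb M}$ is an isomorphism.

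For the ``in particular'' claim, let $\mathbb M'\hookrightarrow\mathbb M$ be a monomorphism in $\langle\text{\sl Qs-ch}\rangle$ and $g\colon\mathbb M'\to\mathcal I$ a morphism. Evaluation at $R$ yields an injection $\mathbb M'(R)\hookrightarrow\mathbb M(R)$ (since kernels are computed objectwise), so injectivity of $I$ extends $g_R$ to some $\tilde h\colon\mathbb M(R)\to I$; by the naturality of the isomorphism $\Phi$ just proved, $\tilde h$ corresponds to a morphism $\tilde g\colon\mathbb M\to\mathcal I$ whose restriction to $\mathbb M'$ is $g$. The main obstacle is the algebraic duality identity in paragraph two: it is classical, but requires careful bookkeeping with left/right conventions to make sure the two maps $I^m\to I^n$ really agree.
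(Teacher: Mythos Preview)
Your argument is correct, but it follows a different path from the paper's proof. The paper first establishes the isomorphism for FP modules $\mathbb F$ by using the embedding $0\to\mathbb F\to\mathcal P\to\mathcal Q$ of Corollary~\ref{TH} (with $P,Q$ finitely presented), together with Corollary~\ref{5.4} to get $\Hom_{\RR}(\mathcal Q,\mathcal I)\to\Hom_{\RR}(\mathcal P,\mathcal I)\to\Hom_{\RR}(\mathbb F,\mathcal I)\to 0$ exact, and then compares with $\Hom_R(Q,I)\to\Hom_R(P,I)\to\Hom_R(\mathbb F(R),I)\to 0$ (exact since $I$ is injective); the general case follows by writing $\mathbb M=\ilim{i}\mathbb F_i$ via Theorem~\ref{T5.17} and passing to inverse limits. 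Your route instead works directly from the projective presentation of Theorem~\ref{4.6N}: you verify the claim on the generators $\mathcal P^*$ via the duality $I\otimes_R P\simeq\Hom_R(\Hom_R(P,R),I)$, extend to direct sums, and descend to cokernels. Your approach is more elementary in that it bypasses the FP-module machinery of Section~4, at the cost of proving the tensor--double-dual identity by hand; the paper's approach is shorter precisely because that machinery is already in place. One small remark: rather than invoking Theorem~\ref{prop4} (where the left/right bookkeeping is, as you note, delicate), it is cleaner to use Yoneda directly: $\Hom_{\RR}(\mathcal P^*,\mathcal I)=\mathcal I(P)=I\otimes_R P$, which sidesteps the convention issue entirely.
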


\begin{proof} Let  $\mathbb F$ be an FP $\RR$-module. By Corollary \ref{TH}, there exists an exact sequence of $\RR$-module morphisms
$$0\to \mathbb F\to \mathcal P\to\mathcal Q$$
where $P$ and $Q$ are finitely presented $R$-modules. By Corollary \ref{5.4},
we have the exact sequence morphisms of groups
$$\Hom_{\mathcal R}(\mathcal Q,\mathcal I)\to \Hom_{\mathcal R}(\mathcal P,\mathcal I)\to \Hom_{\mathcal R}(\mathbb F,\mathcal I)\to 0$$
On the other hand,  
$$\Hom_{R}(Q,I)\to \Hom_{R}( P,I)\to \Hom_{R}(\mathbb F(R),I)\to 0$$ is exact, because $I$ is an injective $R$-module. Hence,  $\Hom_{\mathcal R}(\mathbb F,\mathcal I)=\Hom_{ R}(\mathbb F(R),I)$.
If $\mathbb M'\in \langle\text{\sl Qs-ch}\rangle$, then $\mathbb M'=\ilim{i} \mathbb F_i$, where $\mathbb F_i$ are FP $\RR$-modules, by Theorem \ref{T5.17}. Then,
$$\Hom_{\mathcal R}(\mathbb M',\mathcal I)=\plim{i} \Hom_{\mathcal R}(\mathbb F_i,\mathcal I)=\plim{i} \Hom_{R}(\mathbb F_i(R),\mathcal I)=
\Hom_{R}(\mathbb M'(R),I)$$

\end{proof}

\begin{definition} An $R$-module $M$ is said to be pure-injective if for any pure morphism $N\hookrightarrow N'$ the induced morphism 
$$\Hom_R(N',M)\to \Hom_R(N,M)$$
is surjective.
\end{definition}

\begin{proposition} \label{pure} $\mathbb M$ is an injective object of $\langle\text{\sl Qs-ch}\rangle$ iff $\mathbb M$ is the  quasi-coherent $\RR$-module associated with a pure-injective $R$-module.
\end{proposition}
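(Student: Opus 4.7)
The plan is to prove both implications of the equivalence, with the reverse direction being the main technical content.

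For the forward direction, assume $\mathbb M$ is injective in $\langle\text{\sl Qs-ch}\rangle$ and set $P := \mathbb M(R)$. The key preliminary observation is that a monomorphism $N \hookrightarrow N'$ of $R$-modules is pure precisely when the induced morphism $\mathcal N \to \mathcal{N'}$ is a monomorphism in $\langle\text{\sl Qs-ch}\rangle$, since both conditions amount to $N \otimes_R S \hookrightarrow N' \otimes_R S$ for every $R$-module $S$. Applying injectivity of $\mathbb M$ to such an $\mathcal N \hookrightarrow \mathcal{N'}$ and invoking Proposition \ref{tercer} to identify $\Hom_{\mathcal R}(\mathcal N, \mathbb M) = \Hom_R(N, P)$ immediately yields that $\Hom_R(N', P) \to \Hom_R(N, P)$ is surjective, so $P$ is pure-injective. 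It then remains to identify $\mathbb M$ with $\mathcal P$. I would use the (just-proved) reverse direction to conclude $\mathcal P$ is also injective, compare $\mathbb M$ and $\mathcal P$ via the canonical morphism $\mathcal P = \mathbb M_{qc} \to \mathbb M$ from Proposition \ref{tercerb} (which is the identity on $R$-points), and argue via kernel and cokernel analysis -- both of which vanish at $R$ -- together with the two injectivities that this morphism is an isomorphism.

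For the reverse direction, assume $P$ is pure-injective; the goal is $\Ext^1_{\langle\text{\sl Qs-ch}\rangle}(\mathbb M, \mathcal P) = 0$ for every $\mathbb M \in \langle\text{\sl Qs-ch}\rangle$. I would iterate Theorem \ref{4.6N} to produce a two-step projective resolution $\oplus_k \mathcal R_k^* \to \oplus_i \mathcal Q_i^* \to \oplus_j \mathcal P_j^* \to \mathbb M \to 0$, apply $\Hom_{\mathcal R}(-, \mathcal P)$, and use the Yoneda identification $\Hom_{\mathcal R}(\mathcal Q^*, \mathcal P) = \mathcal P(Q) = P \otimes_R Q$ (for $Q$ finitely presented) to convert the computation into the middle cohomology of the complex of abelian groups $\prod_j (P \otimes_R P_j) \to \prod_i (P \otimes_R Q_i) \to \prod_k (P \otimes_R R_k)$. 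Pure-injectivity of $P$ should supply exactly the exactness needed, by interpreting this complex as $\Hom_R(-, P)$ applied to a pure exact sequence of $R$-modules built from dualizing the original resolution.

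The main obstacle is this reverse direction. Unlike the previous proposition, where $I$ injective in $R$-mod gives the clean identification $\Hom_{\mathcal R}(\mathbb F, \mathcal I) = \Hom_R(\mathbb F(R), I)$ and permits a direct reduction to the $R$-level (because $\Hom_R(-, I)$ is exact), the analogous identification $\Hom_{\mathcal R}(\mathbb F, \mathcal P) = \Hom_R(\mathbb F(R), P)$ fails for general FP $\mathbb F$ and pure-injective $P$: for instance, with $P = \mathbb Z_2$, $R = \mathbb Z$ and $\mathbb F = \mathcal{(\mathbb Z/2)}^*$, one has $\Hom_{\mathcal R}(\mathbb F, \mathcal P) = \mathbb Z/2$ while $\Hom_R(\mathbb F(R), P) = 0$. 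Consequently one cannot bypass the $\mathcal R$-module-level computation; one must carefully translate the projective resolution into a pure exact sequence of $R$-modules whose $\Hom_R(-, P)$ supplies the required vanishing, exploiting that pure-injectivity is designed precisely to control such complexes.
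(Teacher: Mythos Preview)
Your proposal has genuine gaps in both directions, and the overall logical order you chose (prove $\Leftarrow$ first, then use it inside $\Rightarrow$) is the reverse of what actually works.

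\textbf{Forward direction.} Your argument that $P=\mathbb M(R)$ is pure-injective is fine and matches the paper. The problem is showing $\mathbb M\cong\mathcal P$. Vanishing of the kernel and cokernel of $\mathbb M_{qc}\to\mathbb M$ at $R$ does \emph{not} force them to vanish in $\langle\text{\sl Qs-ch}\rangle$: for instance over $R=\mathbb Z$ the subobject $\mathbb K\subseteq\mathcal{\mathbb Z_2}$ given by $\mathbb K(S)=(\mathbb Z_2\otimes S)[2]$ satisfies $\mathbb K(\mathbb Z)=0$ but $\mathbb K(\mathbb Z/2)\neq 0$, and $\mathbb Z_2$ is pure-injective. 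Having both $\mathcal P$ and $\mathbb M$ injective does not help either: injectivity of the middle and right terms of $0\to\mathbb K\to\mathcal P\to\mathbb M\to 0$ does not make $\mathbb K$ injective (compare $0\to\mathbb Z\to\mathbb Q\to\mathbb Q/\mathbb Z\to 0$ in $\mathrm{Ab}$). The paper's route is different and direct: it shows $\mathbb M$ is right-exact, hence quasi-coherent by Watts (Proposition~\ref{Wats}). For finitely presented $N_1\to N_2\to N_3\to 0$, dualize to get $0\to\mathcal N_3^*\to\mathcal N_2^*\to\mathcal N_1^*$ exact in $\langle\text{\sl Qs-ch}\rangle$, then apply $\Hom_{\mathcal R}(-,\mathbb M)$ and use injectivity of $\mathbb M$ to obtain $\mathbb M(N_1)\to\mathbb M(N_2)\to\mathbb M(N_3)\to 0$ exact; a reduction handles general $N_i$.

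\textbf{Reverse direction.} Your plan to compute $\Ext^1$ via a projective resolution lands on the complex $\prod_j(P\otimes_R P_j)\to\prod_i(P\otimes_R Q_i)\to\prod_k(P\otimes_R R_k)$, but the step ``interpret this as $\Hom_R(-,P)$ applied to a pure exact sequence'' is the whole difficulty and is not justified; there is no natural identification $P\otimes_R Q=\Hom_R(X,P)$, and dualizing the resolution does not produce a pure-exact sequence of $R$-modules with the right $\Hom$. The paper bypasses this entirely: since $\langle\text{\sl Qs-ch}\rangle$ has enough injectives, embed $\mathcal M$ into an injective object, which by the already-proved forward direction is $\mathcal M'$ for some pure-injective $M'$; the monomorphism $\mathcal M\hookrightarrow\mathcal M'$ in $\langle\text{\sl Qs-ch}\rangle$ means $M\hookrightarrow M'$ is pure, so pure-injectivity of $M$ gives a retraction and $\mathcal M$ is a direct summand of an injective. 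Thus the paper's order ($\Rightarrow$ first, then $\Leftarrow$) is essential.
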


\begin{proof} $\Rightarrow)$ First, let us prove that $\mathbb M$ is quassi-coherent.
By Proposition \ref{Wats}, we have to prove that $\mathbb M$ is a right-exact functor.
Let $N_1\overset f\to N_2\overset g\to N_3\to 0$ be an exact sequence of $R$-module morphisms. We have to prove that the sequence of morphisms $\mathbb M(N_1)\to \mathbb M(N_2)\to \mathbb M(N_3)\to 0$ is exact.
Put $N_1:=\ilim{i} P_i$, where  $P_i$ are finitely presented $R$-modules and $N_{3i}:=N_2/f(P_i)$.
We have the exact sequences $P_i\to N_2\to N_{3i}\to 0$ and
$\ilim{i} P_i=N_1$ and $\ilim{i} N_{3i}=N_3$. We have only to prove that 
that the sequence of morphisms $\mathbb M(P_i)\to \mathbb M(N_2)\to \mathbb M(N_{3i})\to 0$ is exact. That is, we can suppose that $N_1$ is a finitely presented $R$-module. Put $N_2=\ilim{j} P_j$. The morphism $f\colon N_1\to N_2$ factors through a morphism $f_j\colon N_1\to P_j$. 
Let $f_{j'}$ be the composite morphism $N_1\overset{f'}\to N_j\to N_{j'}$
for any $j'\geq j$.
Put $N_{3j'}:=N_2/f_{j'}(N_1)$. We have the exact sequences $N_1\to P_{j'}
\to N_{3j'}\to 0$ and
$\ilim{j'>j} P_{j'}=N_2$ and $\ilim{j'>j} N_{3j'}=N_3$. 
We have only to prove that 
that the sequence of morphisms $\mathbb M(N_1)\to \mathbb M(P_{j'})\to \mathbb M(N_{3j'})\to 0$ is exact. That is, we can suppose that
$N_1$, $N_2$ and $N_3$ are finitely presented $R$-modules.
The sequence of $\RR$-module morphisms $0\to \mathcal N_3^*\to \mathcal N_2^*\to \mathcal N_1^*$  is an exact sequence. Then, taking $\Hom_{\RR}(-,\mathbb M)$
the sequence
$$\mathbb M (N_1)\to \mathbb M (N_2)\to \mathbb M(N_3)\to 0$$
is exact. We are done.

Put $\mathbb M:=\mathcal M$.
If $N\hookrightarrow N'$ is a pure morphism, then the induced morphism 
$\mathcal N\hookrightarrow \mathcal N'$ is a monomorphism. Then, the morphism 
$$\Hom_R(N',M)=\Hom_{\RR}(\mathcal N',\mathcal M)\to \Hom_{\RR}(\mathcal N,\mathcal M)=\Hom_R(N,M)$$
is surjective and $M$ is pure-injective.

$\Leftarrow)$ Let $M$ be pure-injective. $\langle\text{\sl Qs-ch}\rangle$ has enough injective modules. Let $i\colon \mathcal M\hookrightarrow \mathcal M'$ be a monomorphism, where
$\mathcal M'$ is an injective object of $\langle\text{\sl Qs-ch}\rangle$. The morphism $i$ has a retraction since $M$ is pure-injective. $\mathcal M$ is an injective object of $\langle\text{\sl Qs-ch}\rangle$ since it is a direct summand of $ \mathcal M'$.

\end{proof}

\section{Mittag-Leffler modules}

Mittag-Leffler conditions were first introduced by Grothendieck in \cite{Grot}, and deeply studied
by some authors, such as  Raynaud and Gruson in \cite{RaynaudG}.  Recently, Drinfeld suggested to employ them in infinite dimensional algebraic geometry (see \cite{Drinfeld}  and \cite{Herbera})

\begin{definition} We will say that an $\RR$-module $\mathbb M\in \langle\text{\sl Qs-ch}\rangle$ is an ML module if  the natural morphism $\mathbb M(\prod_{i\in I} S_i)\to \prod_{i\in I} \mathbb M(S_i)$ is injective for any set $\{S_i\}_{i\in I}$ of  $R$-modules.\end{definition}

\begin{example} FP modules are ML modules\end{example}

\begin{example} $M$ is an Mittag-Leffler module iff $\mathcal M$ is a ML module(\cite[Tag 059H]{stacks-project}).\end{example}

\begin{proposition} \label{P6.3} Let $\mathbb M$ and $\mathbb M'$ be   ML modules and $f\colon \mathbb M\to \mathbb M'$ a morphism of $\RR$-modules. Then, $\Ker f$ and $\Ima f$ are ML modules.
\end{proposition}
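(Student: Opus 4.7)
The plan is to observe that both $\Ker f$ and $\Ima f$ are naturally subfunctors of ML modules (of $\mathbb M$ and $\mathbb M'$ respectively), and that being a subfunctor of an ML module, together with membership in $\langle\text{\sl Qs-ch}\rangle$, forces the ML property. So the argument reduces to one diagram chase, done twice.

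First I would verify the $\langle\text{\sl Qs-ch}\rangle$-hypothesis: by Proposition \ref{4.8}, since $\mathbb M$ and $\mathbb M'$ preserve direct limits, so do $\Ker f$ and $\Ima f$; hence both lie in $\langle\text{\sl Qs-ch}\rangle$.

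Next, for any set $\{S_i\}_{i\in I}$ of $R$-modules, recall that $(\Ker f)(S)=\Ker f_S$ and $(\Ima f)(S)=\Ima f_S$, so $\Ker f$ is a subfunctor of $\mathbb M$ and $\Ima f$ is a subfunctor of $\mathbb M'$. For the kernel, I would contemplate the commutative square
$$\xymatrix{(\Ker f)(\prod_i S_i) \ar@{^{(}->}[r] \ar[d] & \mathbb M(\prod_i S_i) \ar[d] \\ \prod_i (\Ker f)(S_i) \ar@{^{(}->}[r] & \prod_i \mathbb M(S_i)}$$
whose horizontal inclusions come from the fact that kernels and products both commute with evaluation. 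The right vertical map is injective because $\mathbb M$ is ML, so the composite down-then-right is injective, and hence the left vertical map is injective too. The argument for $\Ima f$ is identical with $\mathbb M'$ in place of $\mathbb M$.

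There is essentially no obstacle here; the only subtle point is to remember that $\Ker$ and $\Ima$ of $\mathcal R$-module morphisms are computed pointwise (as recalled after Notation \ref{nota2.1}), so the two horizontal arrows in the square really are inclusions, and the two vertical arrows really are the ML-comparison maps for $\Ker f$ and $\Ima f$. Everything else is a direct appeal to Proposition \ref{4.8} and to the assumption that $\mathbb M$ and $\mathbb M'$ are ML.
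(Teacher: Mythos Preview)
Your proof is correct and follows essentially the same approach as the paper: invoke Proposition~\ref{4.8} for membership in $\langle\text{\sl Qs-ch}\rangle$, then observe that any $\mathcal R$-submodule of an ML module inherits the injectivity of the comparison map into the product. The paper states this last step in one sentence while you spell out the commutative square, but the idea is identical.
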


\begin{proof} $\Ker f$ and $\Ima f$ preserve direct limits, by \ref{4.8}.
If $\mathbb F'$ is an $\RR$-submodule of an ML module $\mathbb F$, then the morphism $\mathbb F'(\prod_i S_i)\to \prod_i \mathbb F'(S_i)$ is injective for any set of $\{S_i\}_{i\in I}$ $R$-modules. Hence,
$\Ker f$ and $\Ima f$ are ML modules.

\end{proof}

\begin{lemma} \label{L6.4} Let $\mathbb M$ be an ML module, $\mathbb F$ an FP module and $f\colon \mathbb F\to \mathbb M$ a morphism of $\RR$-modules. Then, $\Ker f$ and $\Ima f$ are  FP modules.
\end{lemma}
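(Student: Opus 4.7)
The plan is to reduce everything to showing that $\Ima f$ is an FP module; once that is established, $\Ker f$ comes for free by applying Proposition \ref{C5.14} to the epimorphism $\mathbb F\twoheadrightarrow \Ima f$ between two FP modules, which yields that the kernel of that epimorphism (which is the same as $\Ker f$) is also FP. So the whole content of the lemma is packed into the FP-ness of $\Ima f$.

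To show $\Ima f$ is an FP module, I would first note that $\Ima f$ automatically lies in $\langle\text{\sl Qs-ch}\rangle$ by Proposition \ref{4.8}, since $\mathbb F$ and $\mathbb M$ both preserve direct limits. What remains is to check that $\Ima f$ preserves direct products. Given any family $\{S_i\}_{i\in I}$ of $R$-modules, I would consider the commutative square
$$
\xymatrix{
\mathbb F\bigl(\prod_i S_i\bigr) \ar[r]^-{f_{\prod S_i}} \ar[d]_-{\alpha} & \mathbb M\bigl(\prod_i S_i\bigr) \ar[d]^-{\beta} \\
\prod_i \mathbb F(S_i) \ar[r]^-{\prod_i f_{S_i}} & \prod_i \mathbb M(S_i)
}
$$
in which $\alpha$ is an isomorphism because $\mathbb F$ is FP, and $\beta$ is injective because $\mathbb M$ is ML. The natural map $\Ima f(\prod_i S_i)\to \prod_i \Ima f(S_i)$ is just the restriction of $\beta$ to the image $f_{\prod S_i}(\mathbb F(\prod_i S_i))$ landing in $\prod_i \Ima f_{S_i}$. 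Injectivity of this restriction is immediate from the injectivity of $\beta$. Surjectivity is a one-line diagram chase: given $(f_{S_i}(m_i))\in\prod_i\Ima f(S_i)$, pull back $(m_i)$ along $\alpha^{-1}$ to an element of $\mathbb F(\prod_i S_i)$ and push it through $f_{\prod S_i}$; by commutativity the result has the right image in $\prod_i \mathbb M(S_i)$.

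I don't anticipate any real obstacle here — the only step requiring any thought is writing down the right commutative square and noticing that the two hypotheses (FP on the domain, ML on the codomain) hit exactly the two arrows one needs (isomorphism and monomorphism, respectively) to push a diagram chase through for $\Ima f$. The rest is an application of already established machinery.
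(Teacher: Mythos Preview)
Your proof is correct and uses essentially the same commutative square and the same two ingredients (Proposition~\ref{4.8} and Proposition~\ref{C5.14}) as the paper. The only difference is the order: the paper first shows $\Ker f$ is FP directly from the diagram (reading off that the left vertical arrow is an isomorphism since kernels commute with products) and then obtains $\Ima f$ as $\Coker[\Ker f\hookrightarrow\mathbb F]$ via \ref{C5.14}, whereas you first chase the diagram to show $\Ima f$ is FP and then obtain $\Ker f$ as $\Ker[\mathbb F\twoheadrightarrow\Ima f]$ via \ref{C5.14}. Both routes are equally short; the paper's version is marginally slicker only because the kernel side of the diagram requires no separate surjectivity check.
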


\begin{proof}  $\Ker f$ preserves direct limits by \ref{4.8}. Let $\{S_i\}_{i\in I}$ be a set of $R$-modules. Consider the commutative diagram with exact rows
$$\xymatrix{ 0 \ar[r] &  \Ker f(\prod_{i\in I} S_i) \ar[d] \ar[r] & \mathbb F(\prod_{i\in I} S_i) \ar[r]  \ar@{=}[d] & \mathbb M(\prod_{i\in I} S_i) \ar@{^{(}->}[d] \\ 0 \ar[r] &  \prod_{i\in I}  \Ker f( S_i)  \ar[r] & \prod_{i\in I} \mathbb F(S_i) \ar[r] & \prod_{i\in I} \mathbb M( S_i)}$$
Hence, $\Ker f(\prod_{i\in I} S_i)=\prod_{i\in I}\Ker f( S_i)$ and $\Ker f $ is an FP module. $\Ima f$ is  isomorphic to the cokernel of the monomorphism $\Ker f \to \mathbb F$, which is an FP module by \ref{C5.14}.

\end{proof}

\begin{lemma} \label{L6.3} If $\{\mathbb M_i,f_{ij}\}$ is a direct system of ML modules and $f_{ij}$ is a monomorphism for any $i\leq j$, then $\ilim{i} \mathbb M_i$ is an ML module.\end{lemma}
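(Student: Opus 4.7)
The plan is to verify both conditions in the definition of an ML module for $\mathbb L := \ilim{i} \mathbb M_i$. That $\mathbb L$ preserves direct limits is immediate from Proposition \ref{4.85}, so the real content is to show that for any family $\{S_k\}_{k\in K}$ of $R$-modules, the natural morphism $\mathbb L(\prod_k S_k)\to\prod_k \mathbb L(S_k)$ is injective.

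The natural morphism factors as
$$\ilim{i}\mathbb M_i(\textstyle\prod_k S_k)\ \overset{\alpha}{\longrightarrow}\ \ilim{i}\,\prod_k\mathbb M_i(S_k)\ \overset{\beta}{\longrightarrow}\ \prod_k \ilim{i}\mathbb M_i(S_k),$$
and I would show each factor is injective separately. For $\alpha$: since each $\mathbb M_i$ is an ML module, every map $\mathbb M_i(\prod_k S_k)\to\prod_k\mathbb M_i(S_k)$ is injective, and $\alpha$ is the direct limit of these injections over a directed set, hence injective by exactness of directed colimits of abelian groups.

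For $\beta$: this is where the monomorphism hypothesis comes in. Since each $f_{ij}$ is a monomorphism of $\RR$-modules, the induced map on sections $\mathbb M_i(S_k)\to\mathbb M_j(S_k)$ is injective for every $k$ and every $i\le j$. An element of $\ilim{i}\prod_k\mathbb M_i(S_k)$ is represented by some $(a_k)_{k\in K}\in\prod_k\mathbb M_{i_0}(S_k)$ at a stage $i_0$. If its image under $\beta$ is zero, then for each $k$ there exists $j_k\ge i_0$ with $a_k=0$ in $\mathbb M_{j_k}(S_k)$; injectivity of $\mathbb M_{i_0}(S_k)\to\mathbb M_{j_k}(S_k)$ then forces $a_k=0$ already in $\mathbb M_{i_0}(S_k)$, so $(a_k)_k=0$ at stage $i_0$ and the class in the direct limit is zero.

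The main obstacle is conceptual rather than technical: in general, direct limits do not commute with arbitrary products, and without the monomorphism hypothesis $\beta$ need not be injective. The hypothesis on the transition maps is exactly what is needed to push the componentwise vanishings back to a single common stage, which is the only nontrivial ingredient in the proof.
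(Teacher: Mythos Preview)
Your proof is correct and follows essentially the same approach as the paper's: both factor the natural map as a composite $\ilim{i}\mathbb M_i(\prod_k S_k)\hookrightarrow \ilim{i}\prod_k\mathbb M_i(S_k)\hookrightarrow \prod_k\ilim{i}\mathbb M_i(S_k)$ and argue that each factor is injective. The paper records only the two injection arrows without justification, whereas you spell out why each holds; your elementwise argument for the injectivity of $\beta$ is exactly the intended use of the monomorphism hypothesis.
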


\begin{proof} $\ilim{i} \mathbb M_i\in \langle\text{\sl Qs-ch}\rangle$. Besides, the composite morphism
$$\aligned   & (\ilim{i} \mathbb M)(\prod_j S_j)=\ilim{i}
\mathbb M_i(\prod_j S_j)\hookrightarrow \ilim{i} \prod_j \mathbb M_i(S_j)\\ & \hookrightarrow  \prod_j \ilim{i}
\mathbb M_i(S_j) = \prod_j  (\ilim{i} \mathbb M_i)(S_j)\endaligned$$
is injective, for any set $\{S_j\}_{j\in J}$ of $R$-modules.

\end{proof}

\begin{proposition} \label{P6.5} An $\RR$-module $\mathbb M$ is an ML module iff
$\mathbb M$ is a direct limit of  FP submodules.
\end{proposition}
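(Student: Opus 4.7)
The plan is to prove each direction separately, with the reverse implication being essentially immediate and the forward implication being the substantive content.

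For $(\Leftarrow)$, suppose $\mathbb M = \ilim{i} \mathbb F_i$ where each $\mathbb F_i$ is an FP submodule of $\mathbb M$. Since $\mathbb F_i \subseteq \mathbb M$ for all $i$, the transition maps of the direct system are inclusions, hence monomorphisms. FP modules are ML modules by the example preceding Proposition \ref{P6.3}, so Lemma \ref{L6.3} directly gives that $\mathbb M$ is ML.

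For $(\Rightarrow)$, the natural candidate is the family $\mathcal F$ of all FP submodules of $\mathbb M$, ordered by inclusion. I would first verify that $\mathcal F$ is upward directed: given $\mathbb F_1,\mathbb F_2 \in \mathcal F$, the direct sum $\mathbb F_1\oplus \mathbb F_2$ is again an FP module (combining the presentations $\mathcal P_k^*\to \mathcal Q_k^*\to \mathbb F_k\to 0$ from Proposition \ref{4.11} and using that $P_1\oplus P_2$, $Q_1\oplus Q_2$ are finitely presented). Then the canonical map $\mathbb F_1\oplus\mathbb F_2\to \mathbb M$ has image in $\mathcal F$ by Lemma \ref{L6.4}, and this image contains both $\mathbb F_1$ and $\mathbb F_2$.

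Next I would show that $\mathbb M$ is actually the colimit of this system. By Theorem \ref{T5.17} (applied to the easy converse direction already established, or by the construction in its proof), $\mathbb M\in \langle\text{\sl Qs-ch}\rangle$ may be written as $\ilim{i} \mathbb F_i$ with each $\mathbb F_i$ FP and with structural maps $f_i\colon \mathbb F_i\to\mathbb M$. The key point is that, since $\mathbb M$ is ML, Lemma \ref{L6.4} applies and each $\Ima f_i$ is an FP submodule of $\mathbb M$, hence lies in $\mathcal F$. From $\mathbb M = \ilim{i} \mathbb F_i$, every element of every $\mathbb M(S)$ lies in some $\Ima f_i(S)$, so the subsystem $\{\Ima f_i\}\subseteq \mathcal F$ is cofinal and $\mathbb M = \ilim{\mathbb F\in\mathcal F}\mathbb F$.

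The main obstacle — or rather, the critical ingredient without which the argument collapses — is Lemma \ref{L6.4}: without knowing that images of FP modules inside $\mathbb M$ are themselves FP, we could neither close the directed-set argument nor identify the colimit with $\mathbb M$. Fortunately that lemma is precisely where the ML hypothesis on $\mathbb M$ is exploited, through the injectivity of $\mathbb M(\prod_i S_i)\to \prod_i \mathbb M(S_i)$ in the commutative square appearing in its proof. Everything else is bookkeeping.
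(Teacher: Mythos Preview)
Your proof is correct, and the $(\Leftarrow)$ direction matches the paper exactly via Lemma~\ref{L6.3}. For $(\Rightarrow)$ the paper takes a slightly more direct route: rather than invoking Theorem~\ref{T5.17}, it applies Theorem~\ref{4.6N} to obtain an epimorphism $\pi\colon \oplus_{i\in I}\mathcal P_i^*\to\mathbb M$, sets $\mathbb F_J:=\pi(\oplus_{i\in J}\mathcal P_i^*)$ for each finite $J\subseteq I$, and observes that each $\mathbb F_J$ is an FP submodule by Lemma~\ref{L6.4} with $\mathbb M=\ilim{J}\mathbb F_J$ immediate. Your route through Theorem~\ref{T5.17} reaches the same destination (that theorem is itself built on \ref{4.6N}), and your introduction of the poset $\mathcal F$ of \emph{all} FP submodules is a pleasant extra observation but not needed for the statement: the system $\{\Ima f_i\}$ already does the job.

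One small wrinkle: the step ``every element of $\mathbb M(S)$ lies in some $\Ima f_i(S)$, so $\{\Ima f_i\}$ is cofinal in $\mathcal F$'' does not quite follow as written---cofinality would require, for each $\mathbb F\in\mathcal F$, an $i$ with $\mathbb F\subseteq\Ima f_i$, which comes from Lemma~\ref{L6.6} rather than from the elementwise statement. But your intended conclusion $\mathbb M=\ilim{\mathbb F\in\mathcal F}\mathbb F$ holds regardless, since $\bigcup_{\mathbb F\in\mathcal F}\mathbb F\supseteq\bigcup_i\Ima f_i=\mathbb M$, so the argument goes through.
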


\begin{proof} $\Rightarrow)$ By \ref{4.6N}, there exists an epimorphism $\pi\colon \oplus_{i\in I} \mathcal P_i^*\to \mathbb M$.  Let $F$ be the set of all finite subsets of $I$. Given $J\in F$,
put $\mathbb F_J:= \pi(\oplus_{i\in J} \mathcal P_i^*)$, which is an FP module by \ref{L6.4}. Obviously, $\mathbb M=\ilim{J\in F} \mathbb F_J$.

$\Leftarrow)$ It is an immediate consequence of \ref{L6.3}.
\end{proof}

\begin{proposition} \label{6.8} $\mathbb M\in \langle\text{\sl Qs-ch}\rangle$ is  an ML module iff for every  FP module $\mathbb F$ and every morphism $f\colon \mathbb F\to \mathbb M$, $\Ima f$ is an FP module.
\end{proposition}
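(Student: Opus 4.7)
The plan is to deduce both directions from lemmas already established in the paper.

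For the forward direction ($\Rightarrow$), the statement is essentially what Lemma \ref{L6.4} asserts. If $\mathbb M$ is ML and $f\colon \mathbb F\to\mathbb M$ is any morphism from an FP module, that lemma says both $\Ker f$ and $\Ima f$ are FP; so I would simply invoke \ref{L6.4} and observe that the claim about $\Ima f$ is the part we need.

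For the backward direction ($\Leftarrow$), the plan is to exhibit $\mathbb M$ as a direct limit of FP submodules and then conclude by \ref{P6.5} (equivalently, by \ref{L6.3}). Since $\mathbb M\in\langle\text{\sl Qs-ch}\rangle$, Theorem \ref{4.6N} gives an epimorphism
$$\pi\colon \oplus_{i\in I}\mathcal P_i^*\twoheadrightarrow \mathbb M$$
with each $P_i$ finitely presented. For every finite subset $J\subseteq I$, the $\RR$-module $\oplus_{i\in J}\mathcal P_i^* = (\oplus_{i\in J} P_i)^*$ is FP, so the restricted morphism $\pi_J\colon \oplus_{i\in J}\mathcal P_i^*\to\mathbb M$ has, by hypothesis, an image $\mathbb F_J:=\Ima\pi_J$ which is an FP submodule of $\mathbb M$.

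The collection $\{\mathbb F_J\}$ indexed by finite subsets $J\subseteq I$ is a direct system whose transition maps are monomorphisms (they are inclusions of submodules of $\mathbb M$). To finish I need to check that $\mathbb M = \ilim{J}\mathbb F_J$, which is immediate at the level of sections: given any $R$-module $S$ and $m\in\mathbb M(S)$, lift $m$ to some element of $(\oplus_{i\in I}\mathcal P_i^*)(S)=\oplus_{i\in I}\mathcal P_i^*(S)$, which has finite support on some $J$, so $m\in\mathbb F_J(S)$. Thus $\mathbb M$ is a direct limit (along monomorphisms) of FP submodules, and Proposition \ref{P6.5} (or equivalently Lemma \ref{L6.3}) yields that $\mathbb M$ is an ML module.

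There is no real obstacle here: the forward direction is a direct citation, and the backward direction is a routine reduction to \ref{P6.5} using the standard presentation from \ref{4.6N}. The only small care needed is to justify that the finite support condition in $\oplus_{i\in I}\mathcal P_i^*(S)$ makes $\ilim{J}\mathbb F_J(S)=\mathbb M(S)$, which follows from the definition of direct sum of functors and the fact that $\pi$ is an epimorphism in $\langle\text{\sl Qs-ch}\rangle$.
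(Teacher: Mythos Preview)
Your proof is correct and follows essentially the same approach as the paper: the forward direction is a direct citation of Lemma \ref{L6.4}, and the backward direction uses the epimorphism from Theorem \ref{4.6N}, takes images over finite subsets $J\subseteq I$ to obtain FP submodules $\mathbb F_J$, and concludes via Proposition \ref{P6.5}. Your added justification that $\mathbb M=\ilim{J}\mathbb F_J$ via the finite-support argument is a detail the paper leaves as ``obvious,'' but otherwise the arguments coincide.
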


\begin{proof} $\Rightarrow)$ It is Lemma \ref{L6.4}.


$\Leftarrow)$ By \ref{4.6N}, there exists an epimorphism $\pi\colon \oplus_{i\in I} \mathcal P_i^*\to \mathbb M$.  Let $F$ be the set of all finite subsets of $I$. Given $J\in F$,
put $\mathbb F_J:= \pi(\oplus_{i\in J} \mathcal P_i^*)$, which is an FP module. Obviously, $\mathbb M=\ilim{J\in F} \mathbb F_J$. By \ref{P6.5},
$\mathbb M$ is  an ML module.

\end{proof}

We can now generalize a crucial closure property of the category of Mittag-Leffler modules (see \cite[Prop 2.2]{Herbera})

\begin{corollary} If $\{\mathbb M_i,f_{ij}\}_{i,j\in I}$ is a direct system of ML modules and  $\ilim{n\in\mathbb N} \mathbb M_{i_n}$ is an ML module
for any ordered subset  $\{i_1\leq i_2\leq \cdots\leq i_n\leq\cdots\}$ of $I$,
then $\ilim{i\in I} \mathbb M_i$ is an ML module.
\end{corollary}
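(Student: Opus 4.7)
The plan is to verify the ML property via Proposition \ref{6.8}: given an FP module $\mathbb F$ and a morphism $f\colon \mathbb F\to \mathbb M := \ilim{i\in I}\mathbb M_i$, I must show $\Ima f$ is FP. By Lemma \ref{L6.6}, $f$ factors through some $\mathbb M_{i_0}$ as $f = \iota_{i_0}\circ g$ with $g\colon \mathbb F\to\mathbb M_{i_0}$. For each $i\geq i_0$ set $\mathbb F_i := \Ima(f_{i_0,i}\circ g)\subseteq \mathbb M_i$; by Lemma \ref{L6.4} each $\mathbb F_i$ is FP, the transitions $v_{i,j}\colon\mathbb F_i\twoheadrightarrow \mathbb F_j$ ($i\leq j$) are surjective, and $\Ima f = \ilim{i\geq i_0}\mathbb F_i$ as subfunctors of $\mathbb M$ (using exactness of directed colimits).

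The problem then reduces to showing that the directed system $\{\mathbb F_i\}_{i\geq i_0}$ stabilizes, i.e.\ that there is some $i^*\geq i_0$ with $v_{i^*,j}$ an isomorphism for every $j\geq i^*$, in which case $\Ima f\cong \mathbb F_{i^*}$ is FP. Suppose it does not; then I inductively build a countable chain $i_0\leq i_1\leq i_2\leq\cdots$ in $I$ with each transition $v_n := v_{i_n,i_{n+1}}$ a proper (non-iso) surjection. By hypothesis $\ilim{n}\mathbb M_{i_n}$ is an ML module, so applying Lemma \ref{L6.4} to the composite $\mathbb F\to\mathbb M_{i_0}\to\ilim{n}\mathbb M_{i_n}$ shows its image $\mathbb G := \ilim{n}\mathbb F_{i_n}$ is FP.

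Next I use that $\mathbb G$ is FP to split the canonical surjections. By Lemma \ref{L6.6}, there is a section $s\colon \mathbb G\to \mathbb F_{i_N}$ of the canonical surjection $u_N\colon \mathbb F_{i_N}\twoheadrightarrow \mathbb G$. For $n\geq N$, the map $s_n := v_{i_N,i_n}\circ s$ is a section of $u_n$, yielding a direct-sum decomposition $\mathbb F_{i_n} = \Ima s_n\oplus K_n$ with $K_n := \Ker u_n$, which is FP by Proposition \ref{C5.14}. A short diagram chase using surjectivity of $v_n$ together with these decompositions shows that $v_n$ restricts to an isomorphism $\Ima s_n\to \Ima s_{n+1}$ and to a surjection $K_n\twoheadrightarrow K_{n+1}$.

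The principal obstacle is the final step. Exactness of directed colimits gives $\ilim{n\geq N} K_n = \Ker(\mathrm{id}_{\mathbb G}) = 0$, so $K_N$ is the direct limit of the subfunctors $\Ker(K_N\to K_m)$, $m\geq N$. Since $K_N$ is FP, Lemma \ref{L6.6} forces $\mathrm{id}_{K_N}$ to factor through one such subfunctor; that is, for some $m\geq N$ the transition $K_N\to K_m$ is the zero map, which together with its surjectivity gives $K_m = 0$. Hence $u_m$ is an isomorphism, and then the relation $u_{m+1}\circ v_m = u_m$ forces $v_m$ to be injective, hence an isomorphism --- contradicting the construction of the chain. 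Therefore $\{\mathbb F_i\}$ stabilizes and $\Ima f$ is FP.
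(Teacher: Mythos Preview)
Your proof is correct, and the overall strategy matches the paper's: verify Proposition~\ref{6.8} by factoring $f$ through some $\mathbb M_{i_0}$ via Lemma~\ref{L6.6}, then use the countable-chain hypothesis together with the compactness of FP modules (Lemma~\ref{L6.6} again) to force stabilization.

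The difference is in bookkeeping. The paper tracks the \emph{kernels} $\mathbb K_j := \Ker(f_{i_0,j}\circ g)\subseteq \mathbb F$, which form an increasing directed family of sub-FP-modules of the fixed FP module $\mathbb F$. For any countable chain $j_1\leq j_2\leq\cdots$, the union $\mathbb K' = \cup_n \mathbb K_{j_n}$ is exactly $\Ker(\mathbb F\to \ilim{n}\mathbb M_{j_n})$, hence FP by the hypothesis and Lemma~\ref{L6.4}; then a single application of Lemma~\ref{L6.6} to $\mathrm{id}_{\mathbb K'}$ shows $\mathbb K' = \mathbb K_{j_n}$ for some $n$, i.e.\ the chain stabilizes. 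From this one concludes $\Ker f_j = \Ker f$ for some $j$, whence $\Ima f\cong \Ima f_j$ is FP. By contrast, you track the \emph{images} $\mathbb F_i$, which are quotients of $\mathbb F$ with surjective transitions; this forces you to first split off a section $\mathbb G\hookrightarrow \mathbb F_{i_N}$ and then run a second FP-compactness argument on the complements $K_n$. Your $K_n$ is precisely $\mathbb K'/\mathbb K_{i_n}$ in the paper's notation, so the two arguments are equivalent, but the kernel formulation avoids the splitting step entirely and is noticeably shorter.
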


\begin{proof} Let $\mathbb F$ be an FP module and $f\colon \mathbb F\to \ilim{i} \mathbb M_i$ a morphism of $\RR$-modules. By Lemma \ref{L6.6}, $f$ factors though a morphism $f_i\colon \mathbb F\to  \mathbb M_i$. Put $f_j:=f_{ij}\circ f_i$ for any $j\geq i$ and put 
$\mathbb K_j:=\Ker f_j$. Observe that $\mathbb  K_j\subseteq \mathbb K_{j'}$, for any $j'\geq j\geq i$. Let  
$$i\leq {j_1}<{j_2}< \cdots <{j_n}< \cdots$$
be an ordered subset of $I$. Observe that $\mathbb K':=\cup_{n\in \mathbb N} \mathbb K_{j_n}$ equals the kernel 
of  the natural morphism $\mathbb F\to \ilim{n\in\mathbb N} \mathbb M_{j_n}$. Hence, $\mathbb K'$ is an FP module. The identity morphism 
$Id\colon \mathbb K'\to \mathbb K'$ factors through a morphism $\mathbb K'\to 
\mathbb K_{j_n}$, by Lemma \ref{L6.6}. Then,  $\mathbb K'=
\mathbb K_{j_n}$ and $\mathbb K_{j_n}=\mathbb K_{j_{n+r}}$, for any $r>0$.
Therefore, there exists $j\in I$ such that $\Ker f_j=\Ker f$. Hence,
$\Ima f=\Ima f_j$ which is an FP module. Then,  $\ilim{i\in I} \mathbb M_i$ is an ML module.

\end{proof}

\begin{theorem} \label{5.8} Let $\mathbb M\in \langle\text{\sl Qs-ch}\rangle$. The following statements are equivalent

\begin{enumerate}

\item $\mathbb M$ is an  ML module.

\item  The kernel of every morphism of $\RR$-modules
$\mathbb F\to\mathbb M$  is  an FP module, for any FP module $\mathbb F$.

\item The kernel of every morphism
$\mathcal N^*\to\mathbb M$ is isomorphic to a quotient of a module scheme, for any  $R$-module $N$.

\item The kernel of every morphism
$\mathcal N^*\to\mathbb M$ preserves direct products, for any  $R$-module $N$.

\end{enumerate}

\end{theorem}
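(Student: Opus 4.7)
The plan is to verify $(1) \Leftrightarrow (2)$ and $(1) \Leftrightarrow (4)$ directly, and then close the cycle via $(3) \Rightarrow (4)$ (easy) and $(1) \Rightarrow (3)$ (the main content).

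For $(1) \Leftrightarrow (2)$, the implication $(1) \Rightarrow (2)$ is Lemma \ref{L6.4}. Conversely, if the kernel of every $g \colon \mathbb F \to \mathbb M$ with $\mathbb F$ an FP module is FP, then $\Ima g \cong \mathbb F / \Ker g$ is FP by Corollary \ref{C5.14}, so $\mathbb M$ is ML by Proposition \ref{6.8}.

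The equivalence $(1) \Leftrightarrow (4)$ is a Yoneda exercise. For $(1) \Rightarrow (4)$: given $f \colon \mathcal N^* \to \mathbb M$, use that $\mathcal N^*$ preserves direct products to identify $\mathcal N^*(\prod_i S_i) = \prod_i \mathcal N^*(S_i)$. The map $\Ker f(\prod_i S_i) \to \prod_i \Ker f(S_i)$ is always injective since $\Ker f$ is a subfunctor of $\mathcal N^*$; surjectivity follows because any tuple $(\eta_i)_i$ with $\eta_i \in \Ker f(S_i)$ corresponds to $\eta \in \mathcal N^*(\prod_i S_i)$ whose image $f_{\prod_i S_i}(\eta)$ has zero image in each $\mathbb M(S_i)$ and hence vanishes by ML-ness of $\mathbb M$. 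For $(4) \Rightarrow (1)$: given $m \in \mathbb M(\prod_i S_i)$ mapping to zero in $\prod_i \mathbb M(S_i)$, let $N := \prod_i S_i$ and $\tilde m \colon \mathcal N^* \to \mathbb M$ the Yoneda morphism with $\tilde m_N(\mathrm{Id}_N) = m$. Each projection $\pi_i$ satisfies $\tilde m_{S_i}(\pi_i) = 0$, so by $(4)$ the tuple $(\pi_i)_i \in \prod_i \Ker \tilde m(S_i) = \Ker \tilde m(N)$; this tuple corresponds under $\mathcal N^*(N) = \prod_i \mathcal N^*(S_i)$ to $\mathrm{Id}_N$, giving $m = \tilde m_N(\mathrm{Id}_N) = 0$.

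For $(3) \Rightarrow (4)$: an epimorphism $\mathcal L^* \twoheadrightarrow \Ker f$ forces $\Ker f$ to preserve direct products, since $\mathcal L^*$ does and the surjection is pointwise; injectivity of $\Ker f(\prod_i S_i) \to \prod_i \Ker f(S_i)$ is inherited from $\mathcal N^*$.

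The main step is $(1) \Rightarrow (3)$. By Yoneda, $f$ corresponds to $m := f_N(\mathrm{Id}_N) \in \mathbb M(N)$. Writing $N = \ilim{i} P_i$ as a direct limit of finitely presented $R$-modules and using that $\mathbb M$ preserves direct limits, $m$ is the image of some $m_i \in \mathbb M(P_i)$ under the map induced by the canonical $\iota_i \colon P_i \to N$. The element $m_i$ gives, again by Yoneda, a morphism $g_i \colon \mathcal P_i^* \to \mathbb M$, and a direct computation shows $f = g_i \circ \iota_i^*$, where $\iota_i^* \colon \mathcal N^* \to \mathcal P_i^*$ is precomposition with $\iota_i$. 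Consequently $\Ker f$ is the pullback $\mathcal N^* \times_{\mathcal P_i^*} \Ker g_i$. By Lemma \ref{L6.4}, $\Ker g_i$ is an FP module, and Proposition \ref{4.11} furnishes an epimorphism $\mathcal Q^* \twoheadrightarrow \Ker g_i$ with $Q$ finitely presented. Let $\beta_i \colon P_i \to Q$ correspond by Yoneda to the composite $\mathcal Q^* \to \Ker g_i \hookrightarrow \mathcal P_i^*$, and let $L := N \sqcup_{P_i} Q$ be the pushout in $R$-modules of $N \xleftarrow{\iota_i} P_i \xrightarrow{\beta_i} Q$. The universal property of this pushout identifies $\mathcal L^* = \mathcal N^* \times_{\mathcal P_i^*} \mathcal Q^*$, and the first projection $\mathcal L^* \to \mathcal N^*$ factors through $\Ker f$ (its values map into $\Ker g_i$ under $\iota_i^*$) and is pointwise surjective, since any $\eta \in \Ker f(T)$ has $\eta \circ \iota_i \in \Ker g_i(T)$, which lifts to some $\phi \in \mathcal Q^*(T)$ yielding $(\eta, \phi) \in \mathcal L^*(T)$. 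The main obstacle is finding a useful presentation of $\Ker f$; the insight is that preservation of direct limits by $\mathbb M$ forces $f$ itself to factor through a morphism from the finitely presented $P_i$, reducing the problem to the FP case handled by Proposition \ref{4.11}.
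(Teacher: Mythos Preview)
Your proof is correct, and for $(1)\Leftrightarrow(2)$, $(1)\Rightarrow(3)$, and $(3)\Rightarrow(4)$ it coincides with the paper's argument essentially verbatim (the paper phrases the main step as $(2)\Rightarrow(3)$, but the content is the same pullback/pushout construction you give).

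The one genuine difference is how you close the loop. The paper proves $(4)\Rightarrow(1)$ by passing through the FP machinery: given an FP module $\mathbb F$ and $f\colon\mathbb F\to\mathbb M$, it covers $\mathbb F$ by some $\mathcal P^*$ (Proposition~\ref{4.11}), applies hypothesis $(4)$ to the composite $g\colon\mathcal P^*\to\mathbb M$, and concludes via Propositions~\ref{5.3} and~\ref{6.8} that $\Ima f=\Ima g$ is FP. Your $(4)\Rightarrow(1)$ is instead a direct Yoneda argument: encode $m\in\mathbb M(\prod_i S_i)$ as a morphism $\tilde m\colon(\prod_i S_i)^*\to\mathbb M$, observe that each projection lies in $\Ker\tilde m$, and use $(4)$ to conclude $\mathrm{Id}\in\Ker\tilde m$. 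This is shorter and more elementary---it bypasses \ref{4.11}, \ref{5.3}, and \ref{6.8} entirely for this implication---while the paper's route is more in keeping with its overall strategy of reducing everything to statements about FP modules. Your redundant direct proof of $(1)\Rightarrow(4)$ is harmless and in the same spirit.
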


\begin{proof} (1) $\iff$ (2) $\Ima f $ is an FP module iff $\Ker f$ is an FP module, by \ref{C5.14}, and $\Ima f $ is an FP module
iff $\mathbb M$ is an ML module, by \ref{6.8}.

%

(2) $\Rightarrow$ (3) Let $f\colon \mathcal N^*\to\mathbb M$ be a morphism of $\RR$-modules.
There exists a direct system $\{P_i\}$ of finitely presented $R$-modules such that $N=\ilim{i} P_i$. Observe, that
$$\Hom_{\RR}(\mathcal N^*,\mathbb M)={\mathbb M}(N)
=\ilim{i} {\mathbb M}(P_i)=\ilim{i}\Hom_{\RR}(\mathcal P_i^*,\mathbb M).$$
Therefore, $f$ factors through a morphism $g\colon \mathcal P_i^*\to \mathbb M$. $\Ker g$ is an FP module since $\mathcal P_i^*$ is an FP module. There exists an epimorphism $\mathcal Q^*\to \Ker g$
by \ref{4.11}
Consider
the morphism $\pi_1\colon \mathcal N^*\times_{\mathcal P_i^*} \mathcal Q^*\to \mathcal N^*$, $\pi_1(w,v)=w$. It is easy to check that $\Ima \pi_1=\Ker f$. Finally, observe that
$\mathcal N^*\times_{\mathcal P_i^*} \mathcal Q^*=(\mathcal N\oplus_{\mathcal P} \mathcal Q)^*$ and observe that $\mathcal N\oplus_{\mathcal P} \mathcal Q$ is a quasi-coherent module since it is equal to the cokernel of a morphism
${\mathcal P}\to \mathcal N\oplus \mathcal Q$.

(3) $\Rightarrow$ (4) $\Ker[\mathcal N^*\to \mathbb M]\simeq \Ima[\mathcal N'^*\to\mathcal N^*]$, for some
$R$-module $N'$. Hence, $\Ker[\mathcal N^*\to \mathbb M]$ preserves direct products, by \ref{5.3}.

(4) $\Rightarrow$ (1) Let $\mathbb F$ be an FP module and
$f\colon \mathbb F\to \mathbb M$ a morphism of $\RR$-modules
By \ref{4.11}, there exists an epimorphism $\mathcal P^*\to \mathbb F$. Let $g$ be the composite morphism $\mathcal P^*\to \mathbb F\to \mathbb M$.
Then, $\Ima f=\Ima g$.
$\Ima g$ preserves direct limits, by \ref{4.8}.
$\Ker g$ preserves direct products by Hypothesis. $\Ima g=\Coker[\Ker g \to \mathcal P^*]$ preserves direct products, by \ref{5.3}.
Therefore, $\Ima f=\Ima g$ is an FP module and $\mathbb M$ is an ML module, by \ref{6.8}.

\end{proof}

%

\begin{theorem} \label{4.6b} Let $M$ be a right $R$-module. $M$ is a Mittag-Leffler module iff   the image of  every morphism  $f\colon \mathcal R^n\to \mathcal M$ is an FP module, for any $n$.

\end{theorem}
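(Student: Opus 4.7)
The plan is to prove the two implications separately, with the forward direction being an immediate application of earlier results and the converse requiring a reduction step followed by a Yoneda-style identification.

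For the forward direction, I would argue as follows. If $M$ is a Mittag-Leffler module, then $\mathcal M$ is an ML module (by the example cited in Section 5). Since $R^n$ is a finitely presented $R$-module, $\mathcal R^n$ is an FP module, so by Lemma \ref{L6.4}, the image of every morphism $\mathcal R^n \to \mathcal M$ is an FP module.

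For the converse, I will use Proposition \ref{6.8}: it suffices to show that for every FP module $\mathbb F$ and every morphism $f\colon \mathbb F \to \mathcal M$, $\Ima f$ is an FP module. First, by Proposition \ref{4.11}, choose an epimorphism $\pi\colon \mathcal Q^* \twoheadrightarrow \mathbb F$ with $Q$ finitely presented; since $\Ima f = \Ima(f\circ \pi)$, I may assume $\mathbb F = \mathcal Q^*$. Next, by Yoneda, $\Hom_{\mathcal R}(\mathcal Q^*, \mathcal M) = \mathcal M(Q) = M \otimes_R Q$, so $f$ corresponds to an element $\sum_{i=1}^n m_i \otimes q_i$, and the explicit description reads $f_S(w) = \sum_i m_i \otimes w(q_i)$ for $w \in \mathcal Q^*(S) = \Hom_R(Q, S)$. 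Let $\beta\colon \mathcal R^n \to \mathcal M$ be the morphism corresponding to $(m_1, \ldots, m_n) \in M^n = \Hom_{\mathcal R}(\mathcal R^n, \mathcal M)$; then $\beta_S(s_1, \ldots, s_n) = \sum_i m_i \otimes s_i$, so clearly $\Ima f \subseteq \Ima \beta$ as sub-$\mathcal R$-modules of $\mathcal M$.

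Now I invoke the hypothesis: $\Ima \beta$ is an FP module, hence an ML module. The morphism $f$ factors as $\mathcal Q^* \to \Ima \beta \hookrightarrow \mathcal M$, and $\mathcal Q^*$ is itself FP; applying Lemma \ref{L6.4} to the first arrow (an FP source mapping into an ML target) yields that $\Ima f$ is FP. Concluding via Proposition \ref{6.8} gives that $\mathcal M$ is ML, hence $M$ is Mittag-Leffler.

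The main obstacle is the middle step: pinning down the explicit formula $f_S(w) = \sum m_i \otimes w(q_i)$ through the Yoneda identification $\Hom_{\mathcal R}(\mathcal Q^*,\mathcal M) = M \otimes_R Q$, which is what enables the comparison $\Ima f \subseteq \Ima \beta$ and reduces the general FP-source case to the free-source case handled by hypothesis.
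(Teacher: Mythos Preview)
Your proof is correct. The forward direction matches the paper's (indeed, your citation of Lemma~\ref{L6.4} is arguably more accurate than the paper's citation of \ref{C5.14}, which literally requires both source and target to be FP).

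For the converse, however, you take a genuinely different route. The paper argues directly via Proposition~\ref{P6.5}: writing $M$ as the directed union of its finitely generated submodules $N$, each inclusion $N\hookrightarrow M$ is covered by some $R^n\to M$, so $\Ima[\mathcal R^n\to\mathcal M]=\Ima[\mathcal N\to\mathcal M]$ is an FP submodule by hypothesis; these FP submodules form a directed system with limit $\mathcal M$, and \ref{P6.5} finishes. Your approach instead goes through Proposition~\ref{6.8}, reduces an arbitrary FP source $\mathbb F$ to $\mathcal Q^*$ via \ref{4.11}, and then uses the explicit Yoneda description (essentially Note~\ref{2.12N}) to factor $\mathcal Q^*\to\mathcal M$ through some $\mathcal R^n\to\mathcal M$, finally invoking \ref{L6.4} once more against the ML module $\Ima\beta$. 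Both arguments are sound; the paper's is shorter and avoids the factorization step, while yours has the virtue of making explicit why the free case suffices and ties in nicely with the element-level description in \ref{2.12N}.
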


\begin{proof}  $\Rightarrow)$  By \ref{C5.14}, $\Ima  f$  is an FP module.

$\Leftarrow)$  Obviously $\mathcal M$ is a direct limit of FP modules. By Proposition \ref{P6.5}, $\mathcal M$ is an ML module, hence $M$ is a Mittag-Leffler module.


\end{proof}

\begin{corollary} \label{4.6} Let $M$ be a right $R$-module. $M$ is a Mittag-Leffler module iff the image of  every  morphism of $\mathcal R$-modules $f\colon \prod_{\mathbb N} \mathcal R\to \mathcal M$ is an FP module (or preserves direct products).
\end{corollary}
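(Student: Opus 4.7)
The plan is to reduce this corollary to Theorem~\ref{4.6b} by showing that every morphism $f\colon \prod_{\mathbb{N}} \mathcal{R}\to \mathcal{M}$ factors through a finite projection, i.e.\ $f = g\circ \pi_n$ for some $n$, where $\pi_n\colon \prod_{\mathbb{N}} \mathcal{R}\to \mathcal{R}^n$ picks out the first $n$ factors and $g\colon \mathcal{R}^n\to\mathcal{M}$ is a suitable morphism.

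First I would identify $\prod_{\mathbb{N}} \mathcal{R}$ with the module scheme $(\oplus_{\mathbb{N}} R)^*$, since both evaluate at any $R$-module $S$ to $\prod_{\mathbb{N}} S$. Combining Proposition~\ref{trivial}, the reflexivity of $\oplus_{\mathbb{N}} \mathcal{R}$ and of $\mathcal{M}$ (Corollary~\ref{2.27} and Theorem~\ref{reflex}), and Theorem~\ref{prop4}, I would compute
$$\Hom_{\mathcal{R}}(\prod_{\mathbb{N}} \mathcal{R},\mathcal{M})=\Hom_{\mathcal{R}}(\mathcal{M}^*,(\oplus_{\mathbb{N}} R)^{**})=\Hom_{\mathcal{R}}(\mathcal{M}^*,\oplus_{\mathbb{N}} \mathcal{R})=M\otimes_R\oplus_{\mathbb{N}} R=\oplus_{\mathbb{N}} M.$$
Any element of $\oplus_{\mathbb{N}} M$ has finite support, say in the first $n$ coordinates, and is thus the image of some element of $M^n=\Hom_{\mathcal{R}}(\mathcal{R}^n,\mathcal{M})$ under the inclusion $M^n\hookrightarrow \oplus_{\mathbb{N}} M$, which under the above identifications is precisely precomposition with $\pi_n$. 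This yields the desired factorization $f=g\circ \pi_n$, and since $\pi_n$ is surjective, $\Ima f=\Ima g$.

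With this factorization in place, both implications become short. For ($\Rightarrow$), Theorem~\ref{4.6b} gives that $\Ima g$ is an FP module, hence so is $\Ima f$. For ($\Leftarrow$), given an arbitrary $g\colon \mathcal{R}^n\to\mathcal{M}$ I would form $f:=g\circ \pi_n$; by hypothesis $\Ima g=\Ima f$ preserves direct products, and since it preserves direct limits by Proposition~\ref{4.8} (being the image of a morphism between two $\mathcal{R}$-modules in $\langle\text{\sl Qs-ch}\rangle$), it is an FP module. Theorem~\ref{4.6b} then yields that $M$ is a Mittag-Leffler module. The parenthetical ``(or preserves direct products)'' in the statement is handled by the same observation, so no separate argument is needed.

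I expect the only nonroutine point to be the Hom computation: although $\prod_{\mathbb{N}} \mathcal{R}$ is a ``large'' functor not lying in $\langle\text{\sl Qs-ch}\rangle$, its morphisms into any quasi-coherent module $\mathcal{M}$ are controlled by the plain tensor product $M\otimes_R\oplus_{\mathbb{N}} R$, whose elements are finitely supported. Once this is observed, the reduction to Theorem~\ref{4.6b} is essentially formal.
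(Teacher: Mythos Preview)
Your proof is correct and follows essentially the same approach as the paper's: both establish that any morphism $f\colon \prod_{\mathbb N}\mathcal R\to\mathcal M$ factors as $g\circ\pi_n$ with $g\colon\mathcal R^n\to\mathcal M$ and $\pi_n$ an epimorphism, so that $\Ima f=\Ima g$, and then invoke Theorem~\ref{4.6b}. The paper compresses your Hom computation into a single citation of Theorem~\ref{prop4} (together with Note~\ref{2.12N}), whereas you route through Proposition~\ref{trivial} and reflexivity first; both arrive at $\Hom_{\mathcal R}(\prod_{\mathbb N}\mathcal R,\mathcal M)\cong\oplus_{\mathbb N}M$, and your explicit treatment of both implications and of the parenthetical clause is more detailed than the paper's terse version.
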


\begin{proof} Any morphism $f\colon \prod_{\mathbb  N} \mathcal R\to \mathcal M$ is the composite morphism of an epimorphism $\prod_{\mathbb  N} \mathcal R\to \mathcal R^n$ and a morphism $g\colon \mathcal R^n\to \mathcal M$, by \ref{prop4}. Observe that $\Ima f=\Ima g$ and $\Ima g$ is an FP module iff it preserves direct products.

\end{proof}

\begin{corollary} Let $M$ be an $R$-module. $M$ is a Mittag-Leffler module iff the kernel of  every  morphism of $\mathcal R$-modules $f\colon \prod_{\mathbb N} \mathcal R\to \mathcal M$ preserves direct products.

\end{corollary}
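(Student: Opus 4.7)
The plan is to deduce this from the preceding corollary \ref{4.6} (which characterises Mittag-Leffler via the image preserving direct products) together with the closure property \ref{5.3} (kernels/images/cokernels between two $\mathcal R$-modules that preserve direct products again preserve direct products). The key bridging observation is that $\prod_{\mathbb N}\mathcal R = (\oplus_{\mathbb N}\mathcal R)^*$ is a module scheme, so it preserves direct products (as noted in the Example on module schemes following \ref{P4.2}).

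For the direct implication ($\Rightarrow$), assume that $M$ is Mittag-Leffler and take any morphism $f\colon \prod_{\mathbb N}\mathcal R\to \mathcal M$. By Corollary \ref{4.6}, $\Ima f$ preserves direct products. I now apply Proposition \ref{5.3} to the surjection $\prod_{\mathbb N}\mathcal R \twoheadrightarrow \Ima f$: both source and target preserve direct products, so its kernel, which is exactly $\Ker f$, preserves direct products.

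For the converse ($\Leftarrow$), assume $\Ker f$ preserves direct products for every morphism $f\colon \prod_{\mathbb N}\mathcal R\to \mathcal M$. Applying Proposition \ref{5.3} to the inclusion $\Ker f \hookrightarrow \prod_{\mathbb N}\mathcal R$ (both preserve direct products by hypothesis, resp.\ by the Example), the cokernel $\prod_{\mathbb N}\mathcal R/\Ker f = \Ima f$ preserves direct products. Corollary \ref{4.6} then yields that $M$ is a Mittag-Leffler module.

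There is essentially no obstacle: the argument is a two-step diagram chase. The only point that merits care is to verify that Proposition \ref{5.3} applies in the hypothesised direction in each case, and in particular that the ambient module $\prod_{\mathbb N}\mathcal R$ indeed preserves direct products (which it does, being a module scheme); one cannot instead try to feed $\mathcal M$ itself into \ref{5.3}, since $\mathcal M$ preserves direct products exactly when $M$ is finitely presented.
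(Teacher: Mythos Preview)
Your proof is correct and follows essentially the same approach as the paper: both arguments use Proposition~\ref{5.3} together with the fact that $\prod_{\mathbb N}\mathcal R$ preserves direct products to show that $\Ker f$ preserves direct products iff $\Ima f$ does, and then invoke Corollary~\ref{4.6}. The paper compresses this into a single sentence, but the content is identical.
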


\begin{proof} It is an immediate consequence of \ref{4.6}, since $\Ker f$ preserves direct products iff $\Ima g$ preserves directs products.

\end{proof}

\section{Strict Mittag-Leffler modules}

\begin{definition} \label{7.1} We will say that an $\RR$-module $\mathbb M\in \langle\text{\sl Qs-ch}\rangle$  is an SML module if it is an ML $\RR$-module and for every  FP submodule $\mathbb F\subseteq \mathbb M$ the dual morphism $\mathbb M^*\to \mathbb F^*$ is an epimorphism.
\end{definition}

\begin{theorem} \label{T7.2} Let $\mathbb M$ be an $\RR$-module. $\mathbb M$ is an SML module iff there exists a direct system $\{\mathbb F_i\}_{i\in I}$
of  FP submodules of $\mathbb M$ such that $\mathbb M=\ilim{i\in I}\mathbb F_i$ and the natural morphism $\mathbb M^*\to \mathbb F_i^*$ is an epimorphism, for each $i\in I$.\end{theorem}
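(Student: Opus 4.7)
My plan is to treat the two implications separately, with the forward direction being essentially a repackaging of the definition and prior results, and the reverse direction requiring a reduction that exploits the factorization property of FP modules combined with reflexivity.

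For $(\Rightarrow)$, I would invoke the SML definition to conclude $\mathbb M$ is ML, then apply Proposition \ref{P6.5} to write $\mathbb M = \ilim{i} \mathbb F_i$ as a direct limit of FP submodules. The dual surjectivity $\mathbb M^* \to \mathbb F_i^*$ for each $i$ is immediate from Definition \ref{7.1}. Nothing more is needed here.

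For $(\Leftarrow)$, assume $\mathbb M = \ilim{i\in I} \mathbb F_i$ with $\mathbb F_i \subseteq \mathbb M$ FP submodules and $\mathbb M^* \to \mathbb F_i^*$ an epimorphism for each $i$. First, $\mathbb M$ is ML: the transition maps $\mathbb F_i \to \mathbb F_j$ (for $i \leq j$) are monic because they factor the monic inclusions into $\mathbb M$, so Lemma \ref{L6.3} applies. Next, given any FP submodule $\mathbb F \subseteq \mathbb M$, Lemma \ref{L6.6} lets me factor the inclusion $\mathbb F \hookrightarrow \mathbb M$ through a morphism $g\colon \mathbb F \to \mathbb F_i$ for some $i$; since the composite $\mathbb F \to \mathbb F_i \hookrightarrow \mathbb M$ and the map $\mathbb F_i \hookrightarrow \mathbb M$ are both monic, $g$ itself is a monomorphism.

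The key step is then to show that $g^*\colon \mathbb F_i^* \to \mathbb F^*$ is an epimorphism; composing with the hypothesized epimorphism $\mathbb M^* \twoheadrightarrow \mathbb F_i^*$ will give the desired $\mathbb M^* \twoheadrightarrow \mathbb F^*$. I expect this to be the only delicate point, and I plan to handle it via reflexivity: by Proposition \ref{trivialon}, $\Ker g = (\Coker g^*)^*$, so $(\Coker g^*)^* = 0$. By Proposition \ref{P5.5} the duals $\mathbb F^*$ and $\mathbb F_i^*$ are FP, so by Proposition \ref{C5.14} their cokernel $\Coker g^*$ is again FP, hence reflexive by Theorem \ref{T5.13}. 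Therefore $\Coker g^* = (\Coker g^*)^{**} = 0$, which yields the required surjectivity of $g^*$ and completes the argument.
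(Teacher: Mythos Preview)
Your proof is correct and follows essentially the same strategy as the paper's: both directions use Proposition~\ref{P6.5} (or Lemma~\ref{L6.3}) for the ML property and Lemma~\ref{L6.6} to factor an arbitrary FP submodule through some $\mathbb F_i$. The only difference is in the key step showing $g^*\colon \mathbb F_i^*\to \mathbb F^*$ is an epimorphism: the paper dispatches this in one line by invoking Corollary~\ref{5.4} (dualizing exact sequences of FP modules), whereas you effectively reprove the needed special case by hand via Proposition~\ref{trivialon} and reflexivity of $\Coker g^*$---valid, but you could have cited \ref{5.4} directly.
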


\begin{proof} $\Rightarrow)$ It is an immediate consequence of Proposition \ref{P6.5}.

$\Leftarrow)$ By Proposition  \ref{P6.5}, $\mathbb M$ is an ML module.
Let $\mathbb F\subseteq \mathbb M$ be  an FP submodule. This monomorphism factors through a monomorphism $\mathbb F\to \mathbb F_i$, by Lemma \ref{L6.6}.
Dually, $\mathbb M^*\to \mathbb F_i^*$ is an epimorphism by the hypothesis, and 
$\mathbb F_i^*\to \mathbb F^*$ is an epimorphism by Corollary \ref{5.4}.
Therefore, the morphism $\mathbb M^*\to \mathbb F^*$ is an epimorphism.


\end{proof}

\begin{proposition} \label{ains} A reflexive $\RR$-module $\mathbb M\in \langle\text{\sl Qs-ch}\rangle$ is an SML module if for every  FP (right) module $\mathbb F$ the image of every morphism $f\colon \mathbb M^*\to \mathbb F$ is an FP (right) module.\end{proposition}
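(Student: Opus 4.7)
The plan is to verify the two requirements of Definition~\ref{7.1} (namely that $\mathbb M$ is ML and that $\mathbb M^*\twoheadrightarrow \mathbb F^*$ is epi for every FP submodule $\mathbb F\subseteq\mathbb M$) by dualizing morphisms and exploiting the self-duality of FP modules.

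First I would show $\mathbb M$ is an ML module via criterion (2) of Theorem~\ref{5.8}. Fix an FP module $\mathbb F$ and a morphism $g\colon \mathbb F\to\mathbb M$, and dualize to $g^*\colon\mathbb M^*\to\mathbb F^*$. By Proposition~\ref{P5.5}, $\mathbb F^*$ is FP (right), so the hypothesis applied to $g^*$ gives that $\Ima g^*$ is FP. From the short exact sequence $0\to\Ima g^*\to\mathbb F^*\to\Coker g^*\to 0$ together with Proposition~\ref{C5.14}, $\Coker g^*$ is FP. Now $\mathbb F$ is reflexive by Theorem~\ref{T5.13} and $\mathbb M$ is reflexive by hypothesis, so Proposition~\ref{trivialon} yields $\Ker g=(\Coker g^*)^*$, which is FP by Proposition~\ref{P5.5}. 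This establishes condition~(2) of Theorem~\ref{5.8}, hence $\mathbb M$ is ML.

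Next I would verify dual-surjectivity. Let $\mathbb G\subseteq\mathbb M$ be an FP submodule and let $g$ be the inclusion. Arguing as above, $\Ima g^*$ and hence $\Coker g^*$ are FP (by the hypothesis and Proposition~\ref{C5.14}), and therefore $\Coker g^*$ is reflexive by Theorem~\ref{T5.13}. Since both $\mathbb G$ and $\mathbb M$ are reflexive, Proposition~\ref{trivialon} applies and gives $(\Coker g^*)^*=\Ker g=0$ because $g$ is a monomorphism. Reflexivity of $\Coker g^*$ then forces $\Coker g^*=(\Coker g^*)^{**}=0$, so $g^*\colon\mathbb M^*\to\mathbb G^*$ is an epimorphism, as required.

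The main obstacle is really just careful sidedness and reflexivity bookkeeping: at each dualization one must confirm that the object being dualized is FP (so that Proposition~\ref{P5.5} keeps producing FP modules and Theorem~\ref{T5.13} guarantees reflexivity), in order that Proposition~\ref{trivialon} can be invoked and $(-)^{**}$ really acts as the identity. Once this dictionary is in place, both clauses of Definition~\ref{7.1} reduce to the same calculation $\Ker g=(\Coker g^*)^*$ applied to a morphism $g$ whose source and target are both reflexive.
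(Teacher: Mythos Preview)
Your proof is correct and follows the same overall strategy as the paper: dualize $g\colon\mathbb F\to\mathbb M$ to $g^*\colon\mathbb M^*\to\mathbb F^*$, apply the hypothesis to obtain that $\Ima g^*$ is FP, and then transfer this information back to the original side.

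The only difference is in the bookkeeping of that transfer. The paper works with images, establishing $\Ima g=(\Ima g^*)^*$ (via the factorization $g=g^{**}=p^*\circ j^*$ with $j^*$ epi by Corollary~\ref{5.4} and $p^*$ mono because $p$ is epi), and then invokes Proposition~\ref{6.8}. You instead work with kernels and cokernels: from $\Ima g^*$ FP you deduce $\Coker g^*$ FP, apply Proposition~\ref{trivialon} to get $\Ker g=(\Coker g^*)^*$, and invoke Theorem~\ref{5.8}(2). For the surjectivity of $g^*$ when $g$ is a monomorphism, the paper argues $\Ima g^*=(\Ima g)^*=\mathbb F^*$ using reflexivity of $\Ima g^*$, while you argue $\Coker g^*=(\Coker g^*)^{**}=(\Ker g)^*=0$ using reflexivity of $\Coker g^*$. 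These are dual bookkeeping choices; your version has the mild advantage that the key identity $\Ker g=(\Coker g^*)^*$ is already recorded as Proposition~\ref{trivialon}, whereas the paper's image identity is only justified by the word ``Again'' pointing back to the $\Rightarrow$ direction.
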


\begin{proof} $\Rightarrow)$ Consider the dual morphism $f^*\colon \mathbb F^*\to \mathbb M$. $\Ima f^*$ is an FP module by Proposition \ref{6.8}. Dually, 
$\mathbb F\to (\Ima f^*)^*$ is an epimorphism by  Corollary \ref{5.4}, and the morphism
$(\Ima f^*)^*\to \mathbb M$ is a monomorphism. Hence, $\Ima f=(\Ima f^*)^*$, which is an FP module by Proposition \ref{P5.5}.

$\Leftarrow)$ Let $g\colon \mathbb F\to \mathbb M$ be an $\RR$-module morphism. 
Again, $\Ima g=(\Ima g^*)^*$ which is an FP module. Hence, $\mathbb M$ is an ML $\RR$-module, by Proposition \ref{6.8}. If $g$ is a monomorphism, $\Ima g^*=(\Ima g^*)^{**}=
(\Ima g)^*=\mathbb F^*$, $g^*$ is an epimorphism and $\mathbb M$  is an SML module.

\end{proof}

%
%
%
%
%

\begin{corollary} \label{5.6}  Let $M$ be a right $R$-module. $\mathcal M$ is an SML module iff for every finitely generated submodule $N\subseteq M$ the image ${\tilde N}$ of the associated morphism
$\mathcal N\to \mathcal M$ is an FP module and the morphism
$\mathcal M^*\to {\tilde N}^*$ is an epimorphism.
\end{corollary}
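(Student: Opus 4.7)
The plan is to deduce this from Theorem \ref{T7.2} and Lemma \ref{L6.4}, indexing the required family of FP submodules of $\mathcal M$ by the directed set $\{N_i\}_{i\in I}$ of finitely generated submodules of $M$.

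For the forward direction, I would take a finitely generated $N\subseteq M$ and choose an epimorphism $R^n\twoheadrightarrow N$; composing with the inclusion yields $\mathcal R^n\to \mathcal N\to \mathcal M$, and since the first arrow is an epimorphism (tensor is right exact), the image of the composite is exactly $\tilde N$. Now $\mathcal R^n$ is an FP module and $\mathcal M$ is ML (every SML module is ML), so Lemma \ref{L6.4} forces $\tilde N$ to be an FP submodule of $\mathcal M$. The SML hypothesis (Definition \ref{7.1}) then delivers the epimorphism $\mathcal M^*\to \tilde N^*$.

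For the converse, the key step is to show $\mathcal M = \ilim{i\in I}\tilde N_i$ and then apply Theorem \ref{T7.2}. Monotonicity of the assignment $N_i\mapsto \tilde N_i$ is immediate from the definition of image, so $\{\tilde N_i\}_{i\in I}$ is a directed system of $\mathcal R$-submodules of $\mathcal M$. For any $R$-module $S$, every element of $\mathcal M(S)=M\otimes_R S$ is a finite sum $\sum_k m_k\otimes s_k$ whose entries $m_k$ all lie in a common finitely generated submodule $N_i\subseteq M$, hence the element lies in $\tilde N_i(S)$; this gives $\bigcup_i\tilde N_i(S) = \mathcal M(S)$, i.e.\ $\ilim{i\in I}\tilde N_i=\mathcal M$. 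By hypothesis each $\tilde N_i$ is an FP module and each $\mathcal M^*\to \tilde N_i^*$ is an epimorphism, so Theorem \ref{T7.2} concludes that $\mathcal M$ is an SML module.

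The argument is essentially bookkeeping and presents no serious obstacle. The only mildly subtle point is that for a merely finitely generated $N$, the functor $\mathcal N$ itself need not be FP, so one cannot apply Lemma \ref{L6.4} to $\mathcal N\to \mathcal M$ directly; this is what forces the detour through the FP module $\mathcal R^n$, after which everything falls into place.
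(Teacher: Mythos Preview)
Your proof is correct and follows essentially the same route as the paper: the forward direction uses that an SML module is ML together with the FP property of images coming from $\mathcal R^n$ (the paper quotes \ref{4.6b}, which encapsulates exactly your detour through $\mathcal R^n$ and Lemma \ref{L6.4}), then invokes Definition \ref{7.1}; the converse applies Theorem \ref{T7.2} to the directed system $\{\tilde N_i\}$, which you have simply spelled out in more detail than the paper's one-line ``immediate consequence of \ref{T7.2}.''
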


\begin{proof} $\Rightarrow)$ ${\tilde N}$ is an FP module by \ref{4.6b}. 
$\mathcal M^*\to {\tilde N}^*$ is an epimorphism, by \ref{7.1}.

$\Leftarrow)$ It is an immediate consequence of \ref{T7.2}.

\end{proof}

\begin{proposition} \label{7.6} Let $\mathbb M\in \langle\text{\sl Qs-ch}\rangle$ be a reflexive $\RR$-module. $\mathbb M$ is an SML module iff
for any $R$-module $N$ and any morphism $g\colon \mathcal N^*\to \mathbb M$, $\Coker g^*$  is isomorphic to an $\mathcal R$-submodule of a quasi-coherent (right) module.

\end{proposition}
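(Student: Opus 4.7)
The plan is to establish both implications, making essential use of the characterization of SML given in Proposition \ref{ains}.

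Forward direction ($\Rightarrow$): Assume $\mathbb{M}$ is SML. Given $g\colon\mathcal{N}^*\to\mathbb{M}$, since $\mathbb{M}$ is ML and $g$ factors through some $\mathcal{P}_i^*$ with $P_i$ finitely presented (by direct-limit preservation of $\mathbb{M}$), the image $\mathbb{F}:=\Ima g$ is an FP submodule of $\mathbb{M}$ by Lemma \ref{L6.4}. Factor $g=i\circ\bar g$ with $\bar g\colon\mathcal{N}^*\twoheadrightarrow\mathbb{F}$ and $i\colon\mathbb{F}\hookrightarrow\mathbb{M}$. Dualizing gives $g^*=\bar g^*\circ i^*$; the SML hypothesis forces $i^*\colon\mathbb{M}^*\twoheadrightarrow\mathbb{F}^*$ to be an epimorphism, and $\bar g^*\colon\mathbb{F}^*\hookrightarrow\mathcal{N}$ is a monomorphism since $\bar g$ is epi (Proposition \ref{trivialon} gives $\Ker\bar g^*=(\Coker\bar g)^*=0$). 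Hence $\Coker g^*\cong\mathcal{N}/\bar g^*(\mathbb{F}^*)$. Invoking Corollary \ref{TH} to embed $\mathbb{F}^*\hookrightarrow\mathcal{Q}$ with $Q$ finitely presented, form the amalgamated sum $\mathcal{W}:=\mathcal{N}\oplus_{\mathbb{F}^*}\mathcal{Q}$ in $\langle\text{\sl Qs-ch}\rangle$; it fits in a short exact sequence $0\to\mathcal{Q}\to\mathcal{W}\to\Coker g^*\to 0$. Combining this with a second application of Corollary \ref{TH} to $\mathcal{Q}/\mathbb{F}^*$ and Proposition \ref{Wats}, exhibit $\mathcal{W}$ as a submodule of a quasi-coherent module, whence $\Coker g^*\cong\mathcal{W}/\mathcal{Q}$ embeds in a quasi-coherent module as well.

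Reverse direction ($\Leftarrow$): Assume the embedding hypothesis. First, for any $g\colon\mathcal{N}^*\to\mathbb{M}$, the chosen embedding $\Coker g^*\hookrightarrow\mathcal{V}$ exhibits $\Coker g^*$ as $\Ima[\mathcal{N}\to\mathcal{V}]$, the image in $\langle\text{\sl Qs-ch}\rangle$ of a morphism between objects of that category; by Proposition \ref{4.8} this lies in $\langle\text{\sl Qs-ch}\rangle$. Corollary \ref{C5.3} then forces $(\Coker g^*)^*=\Ker g$ to preserve direct products, so $\mathbb{M}$ is ML by Theorem \ref{5.8}(4). To upgrade ML to SML I invoke Proposition \ref{ains}: it suffices to verify that $\Ima f$ is an FP module for every morphism $f\colon\mathbb{M}^*\to\mathbb{F}'$ with $\mathbb{F}'$ FP. Embed $\mathbb{F}'\hookrightarrow\mathcal{P}'$ with $P'$ finitely presented (Corollary \ref{TH}) and view the composite $\mathbb{M}^*\to\mathcal{P}'$ as $g^*$ for a unique $g\colon\mathcal{P}'^*\to\mathbb{M}$ via reflexivity of $\mathbb{M}$. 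The hypothesis yields $\Coker g^*\hookrightarrow\mathcal{V}$, so $\Ima f=\Ima g^*=\Ker[\mathcal{P}'\to\mathcal{V}]$ as a subfunctor of $\mathcal{P}'$. Since $P'$ is finitely presented, $\mathcal{P}'$ preserves direct products (Proposition \ref{4.4b}); combining this with Step 1's placement of $\Ima f$ in $\langle\text{\sl Qs-ch}\rangle$ and a diagram chase comparing $\Ima f(\prod_i S_i)$ with $\prod_i\Ima f(S_i)$ inside $\mathcal{P}'(\prod_i S_i)=\prod_i\mathcal{P}'(S_i)$, conclude $\Ima f$ preserves direct products, hence is FP.

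The main obstacle is the product-preservation step in the reverse direction: the pointwise description of $\Ima f$ only yields the natural injection $\Ima f(\prod_i S_i)\hookrightarrow\prod_i\Ima f(S_i)$ without more work, because the map $V\otimes\prod_i S_i\to\prod_i(V\otimes S_i)$ need not be injective for arbitrary $V$. Closing this gap requires exploiting that $\Coker g^*$ already lies in $\langle\text{\sl Qs-ch}\rangle$ to replace $\mathcal{V}$ by a quasi-coherent envelope with more control, using the reflexivity of $\mathbb{M}$ and the FP-structure of $\mathcal{P}'$. A dual subtlety appears in the forward direction, where one must iterate Corollary \ref{TH} and a Watts-style argument to guarantee that the pushout $\mathcal{W}$ genuinely embeds in a quasi-coherent module.
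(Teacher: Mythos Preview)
Your two self-identified obstacles are the right places to look, but both dissolve once you invoke the right results from the paper.

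\textbf{Reverse direction.} The ``product-preservation'' obstacle is illusory. You have $f\colon\mathbb M^*\to\mathbb F'$ with $\mathbb F'$ FP; since $\mathbb M\in\langle\text{\sl Qs-ch}\rangle$, Corollary~\ref{C5.3} says $\mathbb M^*$ preserves direct products, and $\mathbb F'$ does too by definition. Proposition~\ref{5.3} then gives that $\Ima f$ preserves direct products outright---no diagram chase, no control over $\mathcal V$ needed. Combined with $\Ima f=\Ker[\mathcal P'\to\mathcal V]\in\langle\text{\sl Qs-ch}\rangle$ (your argument via Proposition~\ref{4.8}), this makes $\Ima f$ FP, and Proposition~\ref{ains} finishes. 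This is exactly the paper's argument; your separate verification that $\mathbb M$ is ML is correct but redundant, since the $\Leftarrow$ part of Proposition~\ref{ains} deduces ML internally.

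\textbf{Forward direction.} Here you do have a gap: Proposition~\ref{Wats} (Watts) does not help you embed $\mathcal W=\mathcal N\oplus_{\mathbb F^*}\mathcal Q$ into a quasi-coherent module, because $\mathcal W$ need not be right exact. What is needed is the $\Ext$-vanishing of Theorem~\ref{T5.13} (equivalently Corollary~\ref{5.4}). The paper uses it \emph{before} forming the pushout: take the full exact sequence $0\to\mathbb F^*\overset r\to\mathcal P\overset s\to\mathcal Q$ from Corollary~\ref{TH}; by Corollary~\ref{5.4} the map $\bar g^*\colon\mathbb F^*\to\mathcal N$ extends to $t\colon\mathcal P\to\mathcal N$. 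Now form the pushout over $\mathcal P$ instead of over $\mathbb F^*$: the object $\mathcal N\oplus_{\mathcal P}\mathcal Q$ is the cokernel of a morphism between quasi-coherent modules, hence quasi-coherent, and $\Coker g^*=\mathcal N/\Ima \bar g^*$ embeds in it because $\mathcal P/\mathbb F^*\hookrightarrow\mathcal Q$. Your route can be repaired the same way: the extension $0\to\mathcal N\to\mathcal W\to\mathcal Q/\mathbb F^*\to 0$ splits by Theorem~\ref{T5.13}, giving an extension of $\bar g^*$ to $\mathcal Q$ and hence $\mathcal W\hookrightarrow\mathcal N\oplus\mathcal P''$ for suitable finitely presented $P''$; but this is just a roundabout version of the paper's step.
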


\begin{proof}
$\Rightarrow)$  Let $\{\mathbb F_i\}_{i\in I}$ be a direct system of FP submodules of $\mathbb M$ such that $\mathbb M=\ilim{i} \mathbb F_i$. By \ref{L5.11}, $g$ factors through a morphism
$g_i\colon \mathcal N^*\to \mathbb F_i$, for some $i\in I$. Therefore,
$g^*$ is the composite morphism of the epimorphism $\mathbb M^*\to \mathbb F_i^*$ and $g_i^*\colon \mathbb F_i^*\to \mathcal N$. Hence, $\Ima g^*=\Ima g_i^*$ and $\Coker g^*=\Coker g_i^*$. By \ref{TH}, there exist finitely presented  $R$-modules $P$ and $Q$ and an exact sequence of morphisms $0\to \mathbb F_i^*\overset{r}\to \mathcal P\overset{s}\to \mathcal Q$. By \ref{5.4}, there exists a morphism $t\colon \mathcal P\to \mathcal N$ such that $g_i^*=t\circ r$. Considering the obvious morphisms
$$\mathcal N/\Ima g_i^*\hookrightarrow \mathcal N/\Ima g_i^*\oplus_{\mathcal P/\Ima r} \mathcal Q \simeq \mathcal N\oplus_{\mathcal P} \mathcal Q.$$
we obtain that $\Coker g^*$ is a submodule of a quasi-coherent $\RR$-module.

$\Leftarrow)$ Let $\mathbb F$ be an FP module and
$f\colon \mathbb F\to\mathbb M$ a morphism of $\RR$-modules. Let $f^*\colon \mathbb M^*\to \mathbb F^*$ be  the dual morphism.  We have to prove that $\Ima f^*$ is an FP (right) module.
$\Ima f^*$ preserves direct products since
$\mathbb M^*$ and $\mathbb F^*$ preserve direct products.
There exist an $R$-module $M$ and a monomorphism $\Coker f^*\hookrightarrow \mathcal M$.
Let $g$ be the composite morphism $\mathbb F^*\to \Coker f^*\hookrightarrow \mathcal M$.
$\Ima f^* =\Ker g$, which preserves direct limits, by \ref{4.8}.
Therefore, $\Ima f^*$ is an FP (right) module.

\end{proof}

\begin{proposition} $\mathcal M$ is an SML module iff
the cokernel of every morphism
$\mathcal M^*\to\oplus_{\mathbb N} \mathcal R$  is isomorphic  to an $\mathcal R$-submodule of a quasi-coherent (right) module.
\end{proposition}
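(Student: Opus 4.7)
The plan is to derive both directions from Proposition \ref{7.6}, which is the analogous criterion stated for all morphisms $g\colon\mathcal N^*\to\mathcal M$ rather than only those landing in $\oplus_{\mathbb N}\mathcal R$. The starting observation is that $\oplus_{\mathbb N}\mathcal R$ is itself the quasi-coherent module $\mathcal N$ associated with the $R$-module $N:=\oplus_{\mathbb N} R$, so $\mathcal N^*=\prod_{\mathbb N}\mathcal R$ and $\mathcal N$ is reflexive by Theorem \ref{reflex}.

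For the direction $\Rightarrow$, I would combine the adjunction of Proposition \ref{trivial} with reflexivity of $\mathcal M$ and $\mathcal N$ to obtain
\[
\Hom_{\mathcal R}(\mathcal M^*,\oplus_{\mathbb N}\mathcal R)=\Hom_{\mathcal R}(\mathcal M^*,\mathcal N^{**})=\Hom_{\mathcal R}(\mathcal N^*,\mathcal M^{**})=\Hom_{\mathcal R}(\mathcal N^*,\mathcal M).
\]
Unwinding the formulas of Proposition \ref{trivial} shows that any $f\colon\mathcal M^*\to\oplus_{\mathbb N}\mathcal R$ is the dual $g^*$ of the corresponding $g\colon\mathcal N^*\to\mathcal M$, and Proposition \ref{7.6} applied to $g$ immediately yields that $\Coker f=\Coker g^*$ is an $\mathcal R$-submodule of a quasi-coherent module.

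For the direction $\Leftarrow$, I would verify the criterion of Proposition \ref{7.6}: that $\Coker g^*$ embeds in a quasi-coherent module for every $R$-module $N$ and every $g\colon\mathcal N^*\to\mathcal M$. By Theorem \ref{prop4} and Note \ref{2.12N} any such $g$ factors as $g=h\circ g_0$ with $g_0\colon\mathcal N^*\to\mathcal R^n$ and $h\colon\mathcal R^n\to\mathcal M$, so $g^*=g_0^*\circ h^*$. Composing $h^*\colon\mathcal M^*\to\mathcal R^n$ with the coordinate inclusion $\mathcal R^n\hookrightarrow\oplus_{\mathbb N}\mathcal R$ gives a morphism whose cokernel equals $\Coker h^*\oplus\oplus_{i>n}\mathcal R$; by hypothesis it is an $\mathcal R$-submodule of a quasi-coherent module, so in particular the direct summand $\Coker h^*$ admits a monomorphism $\iota\colon\Coker h^*\hookrightarrow\mathcal Q$ into a quasi-coherent $\mathcal Q$.

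The main step, and the technical heart of the argument, is to use $\iota$ to build a quasi-coherent envelope of $\Coker g^*$. I would form the pushout
\[
\mathcal P:=\mathcal N\oplus_{\mathcal R^n}\mathcal Q
\]
along the maps $g_0^*\colon\mathcal R^n\to\mathcal N$ and the composite $\rho\colon\mathcal R^n\twoheadrightarrow\Coker h^*\overset{\iota}{\hookrightarrow}\mathcal Q$. Being the cokernel of a morphism between quasi-coherent modules, $\mathcal P$ is itself quasi-coherent. The crucial point to verify is that the structural morphism $\mathcal N\to\mathcal P$ has kernel exactly $\Ima g^*=g_0^*(\Ima h^*)$: one inclusion is forced by the pushout relations, while the other uses that $\Ker\rho=\Ima h^*$, which holds because $\iota$ is a monomorphism. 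This produces the desired embedding $\Coker g^*\hookrightarrow\mathcal P$ and, via Proposition \ref{7.6}, completes the proof.
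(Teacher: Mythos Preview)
Your proof is correct, but the $\Leftarrow$ direction takes a genuinely different route from the paper. The paper does not go back through Proposition~\ref{7.6}; instead it invokes Corollary~\ref{5.6} directly: starting from an arbitrary $f\colon\mathcal R^n\to\mathcal M$, the hypothesis gives a monomorphism $\Coker f^*\hookrightarrow\mathcal N$, and since $\Coker f^*$ already preserves direct products (by \ref{5.3}) and now equals $\Ima[\mathcal R^n\to\mathcal N]$, it also preserves direct limits (by \ref{4.8}), hence is an FP module. From this one deduces $\Ima f^*$ is FP, and by duality (\ref{5.4}) that $\Ima f=(\Ima f^*)^*$ is FP with $\mathcal M^*\to(\Ima f)^*$ surjective, which is exactly the criterion of \ref{5.6}. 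Your approach instead verifies the full hypothesis of Proposition~\ref{7.6} for every $N$ by the pushout construction $\mathcal N\oplus_{\mathcal R^n}\mathcal Q$; this is essentially re-running the argument inside the proof of \ref{7.6}~($\Rightarrow$), and it has the advantage of never using that $\mathcal M$ is quasi-coherent beyond reflexivity, whereas the paper's argument is shorter but exploits the specific criterion \ref{5.6} available for quasi-coherent $\mathcal M$. For the $\Rightarrow$ direction both arguments are close: the paper first factors $\mathcal M^*\to\oplus_{\mathbb N}\mathcal R$ through a summand $\mathcal R^n$ and then cites \ref{7.6}, while you apply \ref{7.6} directly with $N=\oplus_{\mathbb N}R$.
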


\begin{proof} $\Rightarrow)$  The morphism $\mathcal M^*\to\oplus_{\mathbb N} \mathcal R$ factors through a direct summand $\mathcal R^n$ of $\oplus_{\mathbb N} \mathcal R$. $\Coker[\mathcal M^*\to \mathcal R^n]$ is isomorphic  to an $\mathcal R$-submodule of a quasi-coherent (right) module, by \ref{7.6}.

$\Leftarrow)$ Let $f\colon \mathcal R^n\to \mathcal M$ be a morphism of $\RR$-modules and let $f^*\colon \mathcal M^*\to \mathcal R^n$ be the dual morphism. $\Coker f^*$ preserves direct products by \ref{5.3}.
There exist an $R$-module $N$ and a monomorphism $\Coker f^*\hookrightarrow\mathcal N$. $\Coker f^*=\Ima [\mathcal R^n\to \mathcal N]$ preserves direct limits, by \ref{4.8}.  Therefore,  $\Coker f^*$ is an FP (right) module. $\Ima f^*=\Ker[\mathcal R^n\to \Coker f^*]$ is an FP (right) $\RR$-module, by \ref{C5.14}. We have the epimorphism and the monomorphism $\mathcal M^*\twoheadrightarrow \Ima f^*\hookrightarrow \mathcal R^n$. By \ref{5.4}, we have the epimorphism and the monomorphism $$\mathcal R^n \twoheadrightarrow (\Ima f^*)^*\hookrightarrow  \mathcal M$$
Hence, $\Ima f=(\Ima f^*)^*$ is an FP module. By \ref{5.6}, $\mathcal M$ is an SML module.

\end{proof}

\begin{theorem} \label{hard} $\mathbb M\in \langle\text{\sl Qs-ch}\rangle$ is an SML module iff  there exists a monomorphism $\mathbb M\to \prod_{j\in J} \mathcal P_j$, where $P_j$ is a finitely presented (right) module, for each $j\in J$.
\end{theorem}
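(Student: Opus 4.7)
The plan is to combine the direct-limit characterization of Theorem \ref{T7.2}, the projectivity of module schemes $\mathcal{P}^*$ noted before Theorem \ref{4.6N}, and the adjunction/reflexivity framework of Propositions \ref{trivial}--\ref{trivialon}. The forward direction runs by explicit construction, and the backward direction reduces to producing a finite sub-product that still detects a given FP submodule.

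For $(\Rightarrow)$, suppose $\mathbb{M}$ is SML. By Theorem \ref{T7.2}, write $\mathbb{M} = \ilim_{i\in I} \mathbb{F}_i$ with $\mathbb{F}_i \subseteq \mathbb{M}$ FP submodules and each $\mathbb{M}^* \twoheadrightarrow \mathbb{F}_i^*$ an epimorphism. For each $i$, Corollary \ref{TH} supplies an embedding $j_i \colon \mathbb{F}_i \hookrightarrow \mathcal{P}_i$ with $P_i$ finitely presented. Dualizing via Corollary \ref{5.4} produces an epimorphism $j_i^* \colon \mathcal{P}_i^* \twoheadrightarrow \mathbb{F}_i^*$; since $\mathcal{P}_i^*$ is projective, $j_i^*$ lifts through $\mathbb{M}^* \twoheadrightarrow \mathbb{F}_i^*$ to a morphism $u_i \colon \mathcal{P}_i^* \to \mathbb{M}^*$. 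The adjunction of Proposition \ref{trivial} together with reflexivity $\mathcal{P}_i = \mathcal{P}_i^{**}$ converts $u_i$ into $\tilde u_i \colon \mathbb{M} \to \mathcal{P}_i$; unwinding the adjunction formulas one checks $\tilde u_i\restriction_{\mathbb{F}_i} = j_i$. Assembling yields $\Phi \colon \mathbb{M} \to \prod_i \mathcal{P}_i$, which is a monomorphism: any $0 \neq m \in \mathbb{M}(S) = \ilim_i \mathbb{F}_i(S)$ lifts to a nonzero $m_i \in \mathbb{F}_i(S)$ (each $\mathbb{F}_i(S) \hookrightarrow \mathbb{M}(S)$ being injective), and $\tilde u_i(m) = j_i(m_i) \neq 0$ in $\mathcal{P}_i(S)$.

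For $(\Leftarrow)$, assume $\iota \colon \mathbb{M} \hookrightarrow \prod_{j\in J} \mathcal{P}_j$ with each $P_j$ finitely presented. First, $\mathbb{M}$ is ML: each $\mathcal{P}_j$ preserves direct products by Proposition \ref{4.4b}, whence so does $\prod_j \mathcal{P}_j$, and the injectivity of $\mathbb{M}(\prod_i S_i) \to \prod_i \mathbb{M}(S_i)$ is inherited from the embedding. For the SML property, given any FP submodule $\mathbb{F} \subseteq \mathbb{M}$, the strategy is to produce a finite subset $J_0 \subseteq J$ such that the composite $\mathbb{F} \hookrightarrow \mathbb{M} \to \oplus_{j \in J_0} \mathcal{P}_j$ remains a monomorphism into the finitely presented $\oplus_{j \in J_0} \mathcal{P}_j$. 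Once this finite $J_0$ exists, Corollary \ref{5.4} gives an epimorphism $(\oplus_{j \in J_0} \mathcal{P}_j)^* \twoheadrightarrow \mathbb{F}^*$, so every $\varphi \colon \mathbb{F} \to \mathcal{T}$ lifts to some $\oplus_{j \in J_0} \mathcal{P}_j \to \mathcal{T}$, and composing with $\mathbb{M} \to \oplus_{j \in J_0} \mathcal{P}_j$ yields an extension $\mathbb{M} \to \mathcal{T}$ restricting to $\varphi$, which is precisely the pointwise surjectivity of $\mathbb{M}^* \to \mathbb{F}^*$.

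The main obstacle is thus producing the finite $J_0$. Writing $\alpha = \iota \circ (\mathbb{F} \hookrightarrow \mathbb{M})$ and using Corollary \ref{reFPro} one has $\alpha^* \colon \oplus_j \mathcal{P}_j^* \to \mathbb{F}^*$; by reflexivity (Theorem \ref{T5.13}, Corollary \ref{2.27}) and Proposition \ref{trivialon}, $(\Coker \alpha^*)^* = \Ker \alpha = 0$. If one can upgrade this to $\Coker \alpha^* = 0$, then writing $\Ima \alpha^*$ as the direct limit of the FP submodules $\Ima\bigl[\oplus_{j \in J_0}\mathcal{P}_j^* \to \mathbb{F}^*\bigr]$ (finite $J_0$) and invoking the compactness of the FP module $\mathbb{F}^*$ (corollary after Theorem \ref{T5.17}) yields a single $J_0$ with $\oplus_{j \in J_0} \mathcal{P}_j^* \twoheadrightarrow \mathbb{F}^*$ epi, which is equivalent via Proposition \ref{trivialon} to $\pi_{J_0} \circ \alpha$ being a monomorphism. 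The delicate point is that $(\Coker \alpha^*)^* = 0$ does not a priori force $\Coker \alpha^* = 0$; the plan is to place $\Coker \alpha^*$ inside a reflexive module, using the right-module form of Corollary \ref{TH} to embed $\mathbb{F}^*$ in a finitely presented quasi-coherent $\mathcal{P}'$ and then analyzing $\mathcal{P}'/\Ima \alpha^*$ via the exactness of dualization on the FP part (Corollary \ref{5.4}), so that the canonical map from the cokernel to its bidual becomes injective and the vanishing of the bidual closes the argument.
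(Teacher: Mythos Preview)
Your forward direction is correct and amounts to the same construction as the paper's, just indexed over the chosen $\mathbb F_i$ rather than over all morphisms to all finitely presented modules.

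The backward direction has a genuine gap at precisely the point you flag as ``delicate.'' You correctly reduce to showing that $\alpha^*\colon \oplus_j \mathcal P_j^*\to\mathbb F^*$ is an epimorphism, and you correctly observe that $(\Coker\alpha^*)^*=\Ker\alpha=0$. But your plan to close the implication $(\Coker\alpha^*)^*=0\Rightarrow\Coker\alpha^*=0$ does not work as written: embedding $\mathbb F^*\hookrightarrow\mathcal P'$ via Corollary~\ref{TH} produces an inclusion $\Coker\alpha^*\hookrightarrow\mathcal P'/\Ima\alpha^*$, but nothing forces $\mathcal P'/\Ima\alpha^*$ to be reflexive or FP, since $\Ima\alpha^*$ is only a direct limit of FP submodules and is not itself known to be FP; Corollary~\ref{5.4} therefore does not apply. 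Moreover, even placing $\Coker\alpha^*$ inside a reflexive module would not by itself make the canonical map $\Coker\alpha^*\to(\Coker\alpha^*)^{**}$ injective.

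The gap is fillable, and more directly than you propose: $\Coker\alpha^*$ lies in $\langle\text{\sl Qs-ch}\rangle$ (Propositions~\ref{4.8} and~\ref{4.85}), this category has enough injectives, and by Proposition~\ref{pure} its injectives are all quasi-coherent. Hence if $\Coker\alpha^*\neq 0$ there is a nonzero morphism $\Coker\alpha^*\to\mathcal I$ for some $R$-module $I$, contradicting $(\Coker\alpha^*)^*=0$. Once $\alpha^*$ is an epimorphism you are in fact already done, since $\alpha^*$ factors through $\mathbb M^*\to\mathbb F^*$; the detour through a finite $J_0$ is unnecessary.

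The paper takes a different route for $(\Leftarrow)$, avoiding the cokernel analysis entirely: it introduces the Pontryagin-type functor $\mathbb D=\Hom_{\ZZ}(-,\QQ/\ZZ)$ from the Appendix and proves (Proposition~\ref{T-lemma}) that $\mathbb N^*\circ\mathbb D\cong\mathbb D\circ\mathbb N$ whenever $\mathbb N$ preserves direct limits. Applying this with $\mathbb N=\mathbb F^*$ and $\mathbb N=(\prod_j\mathcal P_j)^*=\oplus_j\mathcal P_j^*$ (both in $\langle\text{\sl Qs-ch}\rangle$, both with reflexive predual) converts the monomorphism $\mathbb F\hookrightarrow\prod_j\mathcal P_j$ into a monomorphism $\mathbb D\circ\mathbb F^*\hookrightarrow\mathbb D\circ(\prod_j\mathcal P_j)^*$; faithfulness of $\mathbb D$ then forces $(\prod_j\mathcal P_j)^*\to\mathbb F^*$ to be an epimorphism.
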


\begin{proof} $\Rightarrow)$
Choose a set $A$ of representatives of the isomorphism classes of finitely presented (right) $R$-modules. Let $B$ be the set of  pairs
$(\mathcal P, g)$, where $P\in A$ and $g\in \Hom_{\mathcal R}(\mathbb  M,\mathcal P)$. The ``canonical'' morphism
$$G\colon \mathbb M\to \prod _{(\mathcal P,g)\in B}\mathcal P, \, G(m):=(g(m))_{(\mathcal P,g)}$$
is a monomorphism: There exists a direct system $\{\mathbb F_i\}$ of FP submodules of $\mathbb M$, such that
$\mathbb M=\ilim{i\in I} \mathbb F_i$ and the natural morphism $\mathbb M^*\to \mathbb F_i^*$ is an epimorphism, for any $i$.
There exist a finitely presented (right) $R$-module $Q\in A$ and a monomorphism $g_i\colon \mathbb F_i\hookrightarrow \mathcal Q$, by \ref{TH}. There exists $f_i\in
\Hom_{\RR}(\mathbb M,\mathcal Q)$ such that
${f_i}_{|\mathbb F_i}=g_i$, since the morphism 
$$\Hom_{\RR}(\mathbb M,\mathcal Q)={\mathbb M^*}(Q)\to \mathbb F_i^*(Q)=\Hom_{\RR}(\mathbb F_i,\mathcal Q)$$
is surjective.
 Let $\pi_{(\mathcal Q,f_i)}\colon \prod _{(\mathcal P,g)\in B}\mathcal P\to \mathcal Q$ be the projection onto the factor indexed by $(\mathcal Q,f_i)$. Therefore, $G_{|\mathbb F_i}$ is a monomorphism since $\pi_{(\mathcal Q,f_i)}\circ G_{|\mathbb F_i}={f_i}_{|\mathbb F_i}=g_i$. Hence, $G$ is a monomorphism.

$\Leftarrow)$ Let $i\colon \mathbb M\hookrightarrow \prod_{j\in J}\mathcal P_j$ be a monomorphism, where $P_j$ is a finitely presented (right) $R$-module for each $j\in J$.
$\prod_j\mathcal P_j$ preserves direct products.
Hence, ${\mathbb M}(\prod_i S_i) \to \prod_i{\mathbb M}( S_i)$ is injective, for any set $\{S_i\}$ of $R$-modules.
Therefore,  $\mathbb M$ is an ML module. 

Let $\mathbb F$ be an FP $\RR$-module and $\mathbb F\hookrightarrow \mathbb M$  a monomorphism.  By \ref{7.1}, we have to prove that the dual morphism $\mathbb M^*\to \mathbb F^*$ is an epimorphism. We can suppose that $\mathbb M=\prod_j\mathcal P_j. $ We have the commutative diagram  (see Appendix)
 
$$\xymatrix{ \mathbb F\circ \mathbb D  \ar@{^{(}->}[r] \ar@{=}[d]^-{\text{\ref{T-lemma}}} &  \mathbb M\circ  \mathbb D  \ar@{=}[d]^-{\text{\ref{T-lemma}}} \\  \mathbb D\circ \mathbb F^* \ar[r] &  \mathbb D\circ \mathbb M^*}$$
Then, the morphism $\mathbb D\circ \mathbb F^* \to   \mathbb D\circ \mathbb M^*$ is a monomorphism. Hence, the morphism $\mathbb M^*\to \mathbb F^*$ is an epimorphism.

\end{proof}

\begin{definition} \cite[II 2.3.2]{RaynaudG} \label{DSML}
An $R$-module $M$ is said to be a strict Mittag-Leffler $R$-module if for every 
finitely generated submodule $N\overset i\subseteq M$ there exist a finitely presented $R$-module $P$ and a commutative diagram of morphisms of $R$-modules
$$\xymatrix @R=8pt {& M \ar[dr]^-f & \\N \ar[ru]^-i \ar[rd]^-i & & P \ar[dl]^-g\\ & M  & }$$

\end{definition}

\begin{theorem} \label{7.8} $M$ is a strict Mittag-Leffler right $R$-module iff $\mathcal M$ is an SML module.\end{theorem}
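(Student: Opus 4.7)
My proof uses Corollary \ref{5.6} together with Theorem \ref{hard}. Both directions are handled by tracing the strict Mittag-Leffler identity through the quasi-coherent incarnation of the modules.

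$(\Rightarrow)$ Suppose $M$ is strict Mittag-Leffler. I will produce a monomorphism $F\colon\mathcal M\hookrightarrow\prod_\alpha\mathcal P_\alpha$ into a product of quasi-coherent modules of finitely presented $R$-modules, so that $\mathcal M$ is SML by Theorem \ref{hard}. For each finitely generated $N\subseteq M$, pick the finitely presented $P_N$ and morphisms $f_N\colon M\to P_N$, $g_N\colon P_N\to M$ with $g_Nf_N|_N=i_N$ provided by Definition \ref{DSML}, and let $F$ be the joint morphism whose $N$-component is $f_N$. For nonzero $m\in M\otimes_R S$, write $m=\sum m_i\otimes s_i$ and choose $N=\sum Rm_i$; tensoring the identity $g_Nf_N|_N=i_N$ with $S$ yields $(g_N\otimes S)(f_N\otimes S)(m)=m$, forcing $(f_N\otimes S)(m)\neq 0$. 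Hence $F_S$ is injective at every $S$.

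$(\Leftarrow)$ Suppose $\mathcal M$ is SML and let $N\subseteq M$ be finitely generated. Corollary \ref{5.6} gives that $\tilde N:=\Ima(\mathcal N\to\mathcal M)$ is FP and that $\mathcal M^*\to\tilde N^*$ is an epimorphism. Corollary \ref{TH} produces a monomorphism $j\colon\tilde N\hookrightarrow\mathcal P$ with $P$ finitely presented. Two lifts assemble the required strict Mittag-Leffler data. Evaluating the epimorphism $\mathcal M^*\twoheadrightarrow\tilde N^*$ at $P$, that is, $\Hom_\RR(\mathcal M,\mathcal P)\twoheadrightarrow\Hom_\RR(\tilde N,\mathcal P)$, lifts $j$ to some $f\in\Hom_R(M,P)$; since $\tilde N(R)=N$, the restriction $f|_N$ is exactly $j_R$. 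On the other hand, $j$ is a monomorphism between FP modules, so by Corollary \ref{5.4} its dual $j^*\colon\mathcal P^*\twoheadrightarrow\tilde N^*$ is an epimorphism; evaluating at $M$ lifts the tautological inclusion $k\colon\tilde N\hookrightarrow\mathcal M$ to some $g\in\Hom_R(P,M)$ satisfying $g\circ j_R=i_N$. Then $gf|_N=g\circ j_R=i_N$, giving the commutative diagram of Definition \ref{DSML}.

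The main obstacle is in the forward direction: one must ensure $F$ is injective at every evaluation module $S$, not just at $S=R$. Since $N\hookrightarrow M$ need not remain injective after tensoring with $S$, the choice of $N$ must be tied to a specific expression $m=\sum m_i\otimes s_i$ of the test element, and the strict Mittag-Leffler identity must be traced carefully through the tensor product to guarantee that $(f_N\otimes S)(m)$ detects $m$. The backward direction is smoother once one recognises that the two epimorphisms $\mathcal M^*\twoheadrightarrow\tilde N^*$ and $\mathcal P^*\twoheadrightarrow\tilde N^*$ encode exactly the two halves of Definition \ref{DSML}.
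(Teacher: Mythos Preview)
Your proof is correct. The backward direction $(\Leftarrow)$ is essentially identical to the paper's: both use Corollary~\ref{5.6} to obtain that $\tilde N$ is FP and $\mathcal M^*\twoheadrightarrow\tilde N^*$, embed $\tilde N$ into some $\mathcal P$ via Corollary~\ref{TH}, and then lift $j$ and the inclusion through the two epimorphisms $\mathcal M^*\twoheadrightarrow\tilde N^*$ and $\mathcal P^*\twoheadrightarrow\tilde N^*$ to produce $f$ and $g$.

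The forward direction $(\Rightarrow)$ genuinely differs. The paper verifies the hypotheses of Corollary~\ref{5.6} directly: from the strict Mittag-Leffler diagram it notes that $\tilde N=\Ima i\simeq\Ima(f\circ i)\subseteq\mathcal P$, hence $\tilde N$ is FP by~\ref{4.6b}, and that the composite $\mathcal P^*\overset{g^*}\to\mathcal M^*\to\tilde N^*$ equals the epimorphism dual to $\tilde N\hookrightarrow\mathcal P$ (using~\ref{5.4}), forcing $\mathcal M^*\to\tilde N^*$ to be epi. You instead assemble all the $f_N$ into a single morphism $F\colon\mathcal M\to\prod_N\mathcal P_N$, check that $F_S$ is injective on every $\mathcal M(S)$ via the retraction identity $(g_N\otimes S)(f_N\otimes S)(m)=m$, and invoke Theorem~\ref{hard}. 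Your route is pleasantly concrete and makes the embedding of Theorem~\ref{hard} explicit for strict Mittag-Leffler modules; however, it leans on Theorem~\ref{hard}, whose $(\Leftarrow)$ half requires the Appendix machinery with the functor $\mathbb D$. The paper's route is logically lighter, using only~\ref{4.6b}, \ref{5.4}, and~\ref{5.6}, and so avoids Theorem~\ref{hard} entirely.
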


\begin{proof} $\Rightarrow)$ For every finitely generated $R$-submodule
$N\overset{i_R}\subseteq M$ there exist a finitely presented (right) $R$-module $P$ and a commutative diagram of morphisms of $\RR$-modules
$$\xymatrix @R=8pt {& \mathcal M \ar[dr]^-f & \\ \mathcal N \ar[ru]^-i \ar[rd]^-i & &  \mathcal  P \ar[dl]^-g\\ &  \mathcal  M  & }$$
(where $i$ is the morphism induced by $i_R$). Hence, $\tilde N:=\Ima i\simeq \Ima(f\circ i)$, which is an FP module by \ref{4.6b}. The morphism $\mathcal M^*\to \tilde N^*$ is an epimorphism since the composite morphism $\mathcal P^*\to \mathcal M^*\to \tilde N^*$ is an epimorphism, by \ref{5.4}. By \ref{5.6}, $\mathcal M$ is an SML module.

$\Leftarrow)$ Let $N\subseteq M$ be  a finitely generated $R$-submodule.
The image  $\tilde N$ of  the induced morphism $\mathcal N\to \mathcal M$
is an FP module, by \ref{5.6}. Let $\tilde i\colon \tilde N\hookrightarrow \mathcal M$ be the inclusion morphism.
There exist a finitely presented (right) $R$-module $P$ and  a monomorphism $j\colon \tilde N\hookrightarrow \mathcal P$, by \ref{TH}. There exist a morphism
$f\colon \mathcal M\to \mathcal P$ such that $j=f\circ i$
since the morphism $\mathcal M^*\to \tilde N^*$ is an epimorphism (by \ref{ains}) and $j\in \tilde N^*(P)$. There exists a morphism $g\colon \mathcal P \to \mathcal M$ such that
$i=g\circ j$ since the morphism $\mathcal P^*\to \tilde N^*$ is an epimorphism by \ref{ains} and $i\in \tilde N^*(M)$.  The diagram
$$\xymatrix @R=8pt { & & \mathcal M \ar[dr]^-f & \\ \mathcal N \ar[r] & \tilde N \ar@{-->}[rr]^-j \ar[ru]^-{\tilde i} \ar[rd]_-{\tilde i} & &  \mathcal  P \ar[dl]^-g\\ & &  \mathcal  M  & }$$
is commutative. Hence, $M$ is a strict Mittag-Leffler right module.

\end{proof}

\section{Appendix: Functor $\mathbb D$}

Let $\mathbb D$ be the contravariant additive functor from the category of abelian groups to the category of abelian groups defined by
$$\mathbb D(N)=\Hom_{\ZZ}(N,\QQ/\ZZ),$$
for any abelian group $N$. $\mathbb D$ is an exact functor and $\mathbb D(N)=0$ iff $N=0$. 

We will say that $\mathbb H$ is a contravariant $\mathcal R$-module if $\mathbb H$ is a contravariant additive functor from the category of $R$-modules to the category abelian groups.

\begin{proposition} \label{Tlemma} Let $\mathbb M$ be an $\RR$-module and $\mathbb H$ a contravariant $\RR$-module. Then, there exists
a natural isomorphism 
$$\Hom_{\RR}(\mathbb M,\mathbb D\circ \mathbb H)=\Hom_{\RR}(\mathbb H,\mathbb D\circ \mathbb M).$$

In particular, if $\mathbb M$ is projective then $\mathbb D\circ \mathbb M$ is injective, and
if $\mathbb H$ is projective then $\mathbb D\circ \mathbb H$ is injective.

\end{proposition}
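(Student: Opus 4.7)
The plan is to write down the adjunction by the standard ``swap'' construction and then check functoriality. Given $f\colon \mathbb M\to \mathbb D\circ \mathbb H$, define $\Phi(f)\colon \mathbb H\to \mathbb D\circ \mathbb M$ by
$$\Phi(f)_S(h)(m):=f_S(m)(h),$$
for any $R$-module $S$, any $h\in \mathbb H(S)$ and any $m\in \mathbb M(S)$; symmetrically define $\Psi(g)_S(m)(h):=g_S(h)(m)$. It is immediate from the definitions that $\Psi\circ\Phi=\mathrm{Id}$ and $\Phi\circ\Psi=\mathrm{Id}$, so the only real content is that $\Phi(f)$ and $\Psi(g)$ are genuine morphisms of (contravariant, resp.\ covariant) $\RR$-modules.

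I would verify this for $\Phi(f)$ as follows. Because $\mathbb H$ is contravariant and $\mathbb D$ is contravariant, $\mathbb D\circ \mathbb H$ is a covariant $\RR$-module; the hypothesis that $f$ is a morphism of covariant $\RR$-modules therefore unpacks, for any $\phi\colon S\to S'$, $m\in \mathbb M(S)$ and $h'\in \mathbb H(S')$, to the identity
$$f_{S'}(\mathbb M(\phi)(m))(h')=f_S(m)(\mathbb H(\phi)(h')).$$
Similarly, $\mathbb D\circ \mathbb M$ is contravariant, and the required naturality of $\Phi(f)$ amounts, after unraveling, to exactly the same identity. Hence $\Phi(f)$ is a morphism of contravariant $\RR$-modules, and by symmetry $\Psi(g)$ is a morphism of covariant $\RR$-modules. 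Naturality in $\mathbb M$ and in $\mathbb H$ is built into the formula.

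For the ``In particular'' part, assume $\mathbb M$ is projective and let $\mathbb H'\hookrightarrow \mathbb H$ be a monomorphism of contravariant $\RR$-modules, i.e.\ $\mathbb H'(S)\hookrightarrow \mathbb H(S)$ is injective for every $S$. Since $\mathbb D$ is exact, $\mathbb D\circ \mathbb H\to \mathbb D\circ \mathbb H'$ is a (pointwise) epimorphism of covariant $\RR$-modules, and projectivity of $\mathbb M$ yields the surjection
$$\Hom_{\RR}(\mathbb M,\mathbb D\circ \mathbb H)\twoheadrightarrow \Hom_{\RR}(\mathbb M,\mathbb D\circ \mathbb H').$$
Transporting along the natural isomorphism established above gives the surjection
$$\Hom_{\RR}(\mathbb H,\mathbb D\circ \mathbb M)\twoheadrightarrow \Hom_{\RR}(\mathbb H',\mathbb D\circ \mathbb M),$$
which is exactly injectivity of $\mathbb D\circ \mathbb M$ in contravariant $\RR$-modules. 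The case in which $\mathbb H$ is projective is entirely symmetric.

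There is no real obstacle here; the proof is a formal manipulation. The only place to be careful is bookkeeping on covariance versus contravariance when checking that $\Phi(f)$ respects the functorial structure, since the two $\mathbb D$-compositions live in opposite-variance categories.
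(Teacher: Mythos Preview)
Your proof is correct and follows the same approach as the paper: the paper's one-line argument simply records the pointwise swap isomorphism $\Hom_{\ZZ}(\mathbb M(S),\Hom_{\ZZ}(\mathbb H(S),\QQ/\ZZ))=\Hom_{\ZZ}(\mathbb H(S),\Hom_{\ZZ}(\mathbb M(S),\QQ/\ZZ))$, which is exactly your $\Phi$ and $\Psi$. You have in fact supplied more than the paper does, since you explicitly verify the naturality condition and spell out the ``in particular'' clause, both of which the paper leaves to the reader.
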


\begin{proof} $\Hom_{\ZZ}(\mathbb M(S),\Hom_{\ZZ}(\mathbb H(S),\QQ/\ZZ))  
 =\Hom_{\ZZ}(\mathbb H(S),\Hom_{\ZZ}(\mathbb M(S),\QQ/\ZZ)),$ for any $R$-module $S$.

\end{proof}

\begin{example} Given an $R$-module $N'$, let $N'_\bullet$  be the contravariant $\RR$-module defined by
$$N'_\bullet(S):=\Hom_R(S,N')$$
Observe that $N'_\bullet$ is a projective $\mathcal R$-module  since $\Hom_{\RR}(N'_\bullet,\mathbb H)=\mathbb H(N')$. 
Let $N$ be a right $R$-module. $\mathbb D\circ \mathcal N=\mathbb D(N)_\bullet$. Hence, 
$\mathbb D^2\circ \mathcal N=\mathbb D\circ \mathbb D(N)_\bullet$ and it is an injective (right)  $\RR$-module by Proposition \ref{Tlemma}. By Proposition \ref{AdjQs},
$(\mathbb D^2\circ \mathcal N)_{\langle\text{\sl Qs-ch}\rangle}$ is an injective 
object of $\langle\text{\sl Qs-ch}\rangle$. By Proposition \ref{pure}, $(\mathbb D^2\circ \mathcal N)_{\langle\text{\sl Qs-ch}\rangle}(R)=\mathbb D^2(N)$ is a pure-injective $R$-module.

\end{example}

Given a module $N$, let $i_N\colon N\to \mathbb D^2(N)$ be the natural morphism defined by $i_N(n)(w):=w(n)$, for any $n\in N$ and $w\in \mathbb D(N)=\Hom_{\ZZ}(N,\QQ/\ZZ)$. The  morphism $i_N$ is a monomorphism and  
it is easy to check that the composite morphism
$$\mathbb D(N) \overset{i_{\mathbb D(N)}}\longrightarrow \mathbb D^3(N) 
\overset{\mathbb D(i_N)} \longrightarrow \mathbb D(N) $$
is the identity morphism.

\begin{proposition} \label{Tlemma2} Let $\mathbb M$ be a right $\RR$-module and $\mathbb H$ a contravariant $\RR$-module. Then, there exists
a natural isomorphism 
$$\Hom_{\RR}(\mathbb M\circ \mathbb D, \mathbb H)=\Hom_{\RR}(\mathbb M,\mathbb H\circ \mathbb D).$$

In particular, if $\mathbb M$ is projective then $\mathbb M\circ \mathbb D$ is projective, and
if $\mathbb H$ is injective then $\mathbb H\circ \mathbb D$ is injective.
\end{proposition}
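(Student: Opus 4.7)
The plan is to realize the desired natural isomorphism via the canonical biduality morphisms $i_N\colon N\to \mathbb D^2(N)$, exploiting the identity $\mathbb D(i_N)\circ i_{\mathbb D(N)}=\mathrm{id}_{\mathbb D(N)}$ stated just above the proposition, which plays the role of a triangle identity for the ``self-adjunction'' of $\mathbb D$.

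Concretely, given $\phi\in \Hom_{\RR}(\mathbb M\circ \mathbb D,\mathbb H)$ I would define, for every right $R$-module $S$, the morphism $\psi_S := \phi_{\mathbb D(S)}\circ \mathbb M(i_S)\colon \mathbb M(S)\to \mathbb H(\mathbb D(S))$, and conversely, given $\psi\in \Hom_{\RR}(\mathbb M,\mathbb H\circ \mathbb D)$, for every left $R$-module $N$, $\phi_N := \mathbb H(i_N)\circ \psi_{\mathbb D(N)}\colon \mathbb M(\mathbb D(N))\to \mathbb H(N)$. Naturality of $\psi$ in $S$ (resp.\ of $\phi$ in $N$) is a routine diagram chase using the naturality of $\phi$ (resp.\ $\psi$) and of $i$, together with the variance of $\mathbb D$ and $\mathbb H$.

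The key verification is that these two assignments are mutually inverse. Starting from $\phi$ and computing the round trip, one obtains $\phi'_N = \mathbb H(i_N)\circ \phi_{\mathbb D^2(N)}\circ \mathbb M(i_{\mathbb D(N)})$. I would invoke naturality of $\phi$ at the morphism $i_N\colon N\to \mathbb D^2(N)$ to rewrite $\mathbb H(i_N)\circ \phi_{\mathbb D^2(N)}=\phi_N\circ \mathbb M(\mathbb D(i_N))$, so that $\phi'_N=\phi_N\circ \mathbb M(\mathbb D(i_N)\circ i_{\mathbb D(N)})=\phi_N$ by the triangle identity. The opposite round trip is entirely symmetric, substituting $i_S$ for $i_N$ and using naturality of $\psi$ in place of $\phi$.

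For the final two assertions, I would combine the natural isomorphism with the fact that $\mathbb D$ is exact and that evaluation of a functor of $\mathcal R$-modules at any fixed module is exact; hence precomposition with $\mathbb D$ carries short exact sequences of right $\mathcal R$-modules to short exact sequences of contravariant $\mathcal R$-modules, and vice versa. Therefore $\Hom_{\RR}(\mathbb M\circ \mathbb D,-)\simeq \Hom_{\RR}(\mathbb M,-\circ \mathbb D)$ is exact whenever $\mathbb M$ is projective, showing $\mathbb M\circ \mathbb D$ is projective; dually $\Hom_{\RR}(-,\mathbb H\circ \mathbb D)\simeq \Hom_{\RR}(-\circ \mathbb D,\mathbb H)$ is exact whenever $\mathbb H$ is injective. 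The only real obstacle is keeping careful track of the left/right and covariant/contravariant distinctions when setting up the naturality squares; once the bookkeeping is fixed, the triangle identity does all the work.
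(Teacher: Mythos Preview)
Your proposal is correct and follows essentially the same route as the paper: the paper defines the two maps exactly as your $\psi_S=\phi_{\mathbb D(S)}\circ\mathbb M(i_S)$ and $\phi_N=\mathbb H(i_N)\circ\psi_{\mathbb D(N)}$, and verifies the round trip via the same naturality square for $\phi$ at $i_N$ combined with the triangle identity $\mathbb D(i_N)\circ i_{\mathbb D(N)}=\mathrm{id}$. Your added remarks on the projective/injective consequences are a welcome elaboration, since the paper leaves that ``in particular'' implicit.
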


\begin{proof} Given a morphism $f\colon \mathbb M\circ \mathbb D\to  \mathbb H$ composing with $\mathbb D$ we obtain the morphism 
$\mathbb M\circ \mathbb D^2\to  \mathbb H\circ \mathbb D$. Let $f'$ be the composite morphism $\mathbb M\to \mathbb M\circ \mathbb D^2\to  \mathbb H\circ \mathbb D$, that is, $f'_N=f_{\mathbb D(N)} \circ \mathbb M(i_N)$, for any $R$-module $N$.

Given a morphism $g\colon \mathbb M\to  \mathbb H\circ \mathbb D$ composing with $\mathbb D$ we obtain the morphism 
$\mathbb M\circ \mathbb D\to  \mathbb H\circ \mathbb D^2$. Let $\widehat g$ be the composite morphism $\mathbb M\circ \mathbb D\to  \mathbb H\circ \mathbb D^2\to \mathbb H$, , that is, $\widehat g_N=\mathbb H(i_N)\circ g_{\mathbb D(N)} $, for any $R$-module $N$.

We have to prove that $f=\widehat{f'\,}$ and $g={\widehat g\,}'$.
The diagram 
$$\xymatrix{\mathbb M(\mathbb D(N)) \ar[r]^-{\mathbb M(i_{\mathbb  D(N)})}
\ar[rd]_-{Id} & \mathbb M(\mathbb D^3(N)) \ar[r]^-{f_{\mathbb  D^2(N)}} \ar[d]^-{\mathbb M(\mathbb D(i_{N}))} & \mathbb  H(\mathbb  D^2(N)) \ar[d]^-{\mathbb H(i_N)}\\ & \mathbb M(\mathbb D(N)) \ar[r]^-{f_N} & \mathbb H(N)}$$
is commutative. Hence, $f_N=\mathbb H(i_N)\circ f_{\mathbb  D^2(N)}\circ \mathbb M(i_{\mathbb  D(N)})=\mathbb H(i_N)\circ f'_{{\mathbb D(N)}}=\widehat{f'\,}_N$. 

Likewise, $g={\widehat g\,}'$.

\end{proof}

%

Let $N$ be a right $R$-module and let $c\colon N\otimes_R \mathbb D(N)\to \QQ/\ZZ$ be defined by $c(n\otimes w):=w(n)$. 
We have a natural morphism
$$\mathbb M^*\circ \mathbb D\to \mathbb D\circ \mathbb M, \, f\mapsto \tilde f \text{ for any } f\in \mathbb M^*(\mathbb D (N))$$
where $\tilde f\in (\mathbb D\circ \mathbb M)(N)=\Hom_{\ZZ}(\mathbb M(N),\QQ/\ZZ)$ is defined by $\tilde f(m):=c(f_{N}(m))$. By Proposition \ref{Tlemma2},
we have the natural morphism
$$\mathbb M^*\to \mathbb D\circ \mathbb M\circ \mathbb D, \, f\mapsto \tilde f,
\text{ for any } f\in \mathbb M^*(N)=\Hom_{\mathcal R}(\mathbb M,\mathcal N),$$
where $\tilde f\in \mathbb D(\mathbb M(\mathbb D(N)))=\Hom_{\ZZ}(\mathbb M(\mathbb D(N)),\QQ/\ZZ)$ is defined as $\tilde f(m)=c(f_{\mathbb D(N)}(m))$.

\begin{proposition} The natural morphism
$$\mathbb M^*\to \mathbb D\circ \mathbb M\circ \mathbb D$$
is a monomorphism.  In particular, $\mathbb M^*$ is a well defined functor, that is, $\mathbb M^*(N)$ is a set for any $R$-module $N$.

\end{proposition}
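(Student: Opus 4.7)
The plan is to show pointwise injectivity: for each $R$-module $N$, the map
$\mathbb M^*(N)\to \mathbb D(\mathbb M(\mathbb D(N)))$, $f\mapsto \tilde f$,
is injective. Fix $f\in \mathbb M^*(N)=\Hom_{\mathcal R}(\mathbb M,\mathcal N)$ with $\tilde f=0$, meaning that $c(f_{\mathbb D(N)}(m))=0$ for every $m\in \mathbb M(\mathbb D(N))$. I want to conclude that $f_S(m')=0$ in $N\otimes_R S$ for every $R$-module $S$ and every $m'\in \mathbb M(S)$.

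The key trick is that $\QQ/\ZZ$ is an injective cogenerator of $\mathbb Z$\text{-mod}, so if $f_S(m')\in N\otimes_R S$ were nonzero there would exist $\phi\in \Hom_{\ZZ}(N\otimes_R S,\QQ/\ZZ)$ with $\phi(f_S(m'))\neq 0$. By the standard tensor--hom adjunction this $\phi$ corresponds to a morphism of $R$-modules $\phi'\colon S\to \mathbb D(N)$, characterized by $\phi(n\otimes s)=\phi'(s)(n)$, equivalently $c\circ \mathcal N(\phi')=\phi$, where $\mathcal N(\phi')\colon N\otimes_R S\to N\otimes_R\mathbb D(N)$ is $n\otimes s\mapsto n\otimes \phi'(s)$.

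Now apply naturality of $f$ to $\phi'$: the square $f_{\mathbb D(N)}\circ \mathbb M(\phi')=\mathcal N(\phi')\circ f_S$ commutes. Setting $m:=\mathbb M(\phi')(m')\in \mathbb M(\mathbb D(N))$, this gives
$$\tilde f(m)=c(f_{\mathbb D(N)}(m))=c(\mathcal N(\phi')(f_S(m')))=\phi(f_S(m'))\neq 0,$$
contradicting the hypothesis $\tilde f=0$. Hence $f_S(m')=0$ for all $S$ and all $m'$, i.e., $f=0$. This establishes injectivity. The set-theoretic ``in particular'' then follows: since $\mathbb M^*(N)$ injects into the abelian group $\mathbb D(\mathbb M(\mathbb D(N)))$, it is a (small) set, so $\mathbb M^*$ is a well-defined functor. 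The main substantive step is producing the morphism $\phi'$ from an ad hoc character on $N\otimes_R S$; nothing harder than the tensor--hom adjunction and the naturality of $f$ is required.
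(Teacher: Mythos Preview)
Your argument is correct. Both your proof and the paper's rest on the fact that $\QQ/\ZZ$ is an injective cogenerator of abelian groups, but the packaging is different. The paper argues more formally: it observes that the natural map factors as
\[
\mathbb M^*(N)=\Hom_{\mathcal R}(\mathbb M,\mathcal N)\hookrightarrow \Hom_{\mathcal R}(\mathbb M,\mathbb D^2\circ\mathcal N)\cong (\mathbb D\circ\mathbb M\circ\mathbb D)(N),
\]
where the first arrow is post-composition with the monomorphism $\mathcal N\hookrightarrow \mathbb D^2\circ\mathcal N$ induced by $i_N$, and the isomorphism comes from the adjunction $\Hom_{\mathcal R}(\mathbb M,\mathbb D\circ\mathbb H)=\Hom_{\mathcal R}(\mathbb H,\mathbb D\circ\mathbb M)$ (Proposition~\ref{Tlemma}) together with $\mathbb D\circ\mathcal N=\mathbb D(N)_\bullet$ and Yoneda. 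Your proof unwinds the same content elementwise: your $\phi'$ is precisely the element of $\Hom_R(S,\mathbb D(N))$ produced by the tensor--hom adjunction that underlies the identification $\mathbb D\circ\mathcal N=\mathbb D(N)_\bullet$, and your naturality square is what makes the formal composite above agree with $f\mapsto\tilde f$. The paper's version is slicker and reuses the appendix machinery; yours is more self-contained and makes the role of the cogenerator property explicit.
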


\begin{proof} The composite morphism
$$\aligned \mathbb M^*(N) & =\Hom_{\RR}(\mathbb M,\mathcal N)\hookrightarrow \Hom_{\RR}(\mathbb M,\mathbb D^2\circ \mathcal N)=\Hom_{\RR}(\mathbb D\circ \mathcal N,\mathbb D\circ \mathbb M)\\ &=\Hom_{\RR}(\mathbb D(N)_\bullet,\mathbb D\circ \mathbb M) =(\mathbb D\circ \mathbb M)(\mathbb D(N))=
(\mathbb D\circ \mathbb M\circ \mathbb D)(N)\endaligned$$
is injective.
\end{proof}

%
%
%
%

By Proposition \ref{Tlemma2}, we have the natural morphism
$$\aligned \mathbb M\circ \mathbb D\to \mathbb D\circ \mathbb M^*,\, m\mapsto \tilde m\, \text{ for any } m\in \mathbb M(\mathbb D(N))\\ 
\endaligned$$
where $\tilde m\in (\mathbb D\circ \mathbb M^*)(N)=\Hom_{\ZZ}(\Hom_{\mathcal R}(\mathbb M,\mathcal N),\QQ/\ZZ)$ is defined by $\tilde m(g):=c(g_{\mathbb D(N)}(m))$, and the triangle 
$$\xymatrix{ \mathbb M\circ \mathbb D \ar[rr] \ar[rd] & &  \mathbb D\circ \mathbb M^*\\ & \mathbb M^{**}\circ \mathbb D \ar[ur]  &}$$
is commutative.

\begin{proposition} \label{T-lemma} If $\mathbb M$ preserves direct limits, then the natural morphism $$\mathbb M^*\circ \mathbb D\to \mathbb D\circ \mathbb M$$ is an isomorphism.
\end{proposition}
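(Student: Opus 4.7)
The plan is to reduce to the case $\mathbb M=\mathcal P^*$ with $P$ finitely presented by means of the projective presentation of Theorem \ref{4.6N}, and finally to the case of a free module, where the morphism becomes essentially the identity on $\mathbb D(N)$.

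First I would check that both sides are contravariant and left-exact as functors of $\mathbb M$, and that they convert direct sums in $\mathbb M$ into direct products of values. For $\mathbb M^*\circ\mathbb D$ this is immediate: $\Hom_{\mathcal R}(-,\mathcal N)$ is left-exact contravariant, and direct sums go out via $(\oplus_i\mathbb M_i)^*(N)=\prod_i\mathbb M_i^*(N)$; precomposing with the exact functor $\mathbb D$ preserves these properties. For $\mathbb D\circ\mathbb M$ it is equally immediate: $\mathbb M\mapsto\mathbb M(N)$ is exact (kernels and cokernels in $\mathcal R$-modules are computed pointwise) and converts $\oplus_i\mathbb M_i$ to $\oplus_i\mathbb M_i(N)$, while $\mathbb D$ is contravariant exact and sends direct sums to direct products. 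The natural transformation $\phi_{\mathbb M,N}\colon \mathbb M^*(\mathbb D(N))\to\mathbb D(\mathbb M(N))$, $\phi(f)(m):=c(f_N(m))$, is readily checked to be compatible with both identifications.

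Since $\mathbb M$ preserves direct limits, Theorem \ref{4.6N} gives an exact sequence $\oplus_i\mathcal Q_i^*\to\oplus_j\mathcal P_j^*\to\mathbb M\to 0$ with $P_j,Q_i$ finitely presented. Applying the two contravariant left-exact functors above yields a commutative diagram with exact rows, so a standard five-lemma argument reduces the claim, via the direct-sum decomposition, to the case $\mathbb M=\mathcal P^*$ with $P$ finitely presented. In that case, by the reflexivity of $\mathcal P$ (Theorem \ref{reflex}), the left-hand side is $\mathcal P(\mathbb D(N))=P\otimes_R\mathbb D(N)$, while the right-hand side is $\mathbb D(\Hom_R(P,N))$, and the natural map is the classical evaluation morphism $p\otimes\varphi\mapsto(f\mapsto\varphi(f(p)))$. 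Both sides are right-exact in $P$ (the left by right-exactness of tensor product; the right because $\Hom_R(-,N)$ turns right-exactness into left-exactness and $\mathbb D$ then turns it back into right-exactness), and for $P=R$ both are canonically $\mathbb D(N)$ and the map is the identity. Choosing a presentation $R^m\to R^n\to P\to 0$ and applying the five-lemma one last time finishes the proof.

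The main obstacle is purely bookkeeping: one has to verify that under each of the three reductions (direct-sum decomposition, passage from $\mathbb M$ to $\mathcal P^*$ via the presentation, and passage from $P$ to free modules via $R^m\to R^n\to P\to 0$) the natural transformation defined through the evaluation pairing $c$ really does coincide, up to the canonical identifications (reflexivity of $\mathcal P$, $\Hom$ out of a direct sum, $\mathbb D$ of a direct sum), with the evaluation map at each reduced stage. Once this compatibility is in hand, the chain of five-lemmas collapses the problem to the trivial case $P=R$.
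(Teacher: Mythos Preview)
Your proposal is correct and follows essentially the same two-step reduction as the paper's proof: first use the projective presentation of Theorem~\ref{4.6N} together with left-exactness in $\mathbb M$ and the direct-sum-to-direct-product behaviour to reduce to $\mathbb M=\mathcal P^*$ with $P$ finitely presented, then use right-exactness in $P$ (i.e.\ in the module $M$ with $\mathcal M\circ\mathbb D$ versus $\mathbb D\circ\mathcal M^*$) and compatibility with finite direct sums to reduce to $P=R$, where the map is the identity on $\mathbb D(N)$. The paper's proof is terser but structurally identical; your write-up simply makes the five-lemma steps and the evaluation pairing explicit.
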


\begin{proof} 
The contravariant functors $\mathbb M  \rightsquigarrow \mathbb M^*\circ \mathbb D,\, \mathbb D\circ \mathbb M$ are  left-exact   and transform direct sums into direct products. By Theorem \ref{4.6N}, we have only to check that
$\mathcal P\circ \mathbb D=\mathbb D\circ \mathcal P^*$, for any finitely presented $R$-module.
The functors $M  \rightsquigarrow \mathcal M\circ \mathbb D,\, \mathbb D\circ \mathcal  M^*$ are right exact and  transform finite direct sums into finite direct sums and $\mathcal R\circ \mathbb D=\mathbb D\circ \mathcal R^*$, and we conclude.


%
%
%
%
%
\end{proof}

%
%

\end{document}